\newtheorem{thm}{Theorem}[section]
\newtheorem{lemma}[thm]{Lemma}
\newtheorem{prop}[thm]{Proposition}
\newtheorem{cor}[thm]{Corollary}
\newtheorem*{claim}{Claim}
\newtheorem*{thm*}{Theorem}
\newtheorem*{lemma*}{Lemma}
\newtheorem*{prop*}{Proposition}
\newtheorem*{cor*}{Corollary}
\newtheorem*{conj*}{Conjecture}
\theoremstyle{remark}
\newtheorem{defn}[thm]{Definition}
\newtheorem*{defn*}{Definition}
\newtheorem{rmk}[thm]{Remark}
\DeclareMathOperator{\mlt}{mlt}
\DeclareMathOperator{\gcr}{gcr}
\DeclareMathOperator{\grn}{grn}
\DeclareMathOperator{\rank}{rank}
\newcommand*{\NN}{\mathbb{N}}
\newcommand*{\RR}{\mathbb{R}}
\tikzstyle{vertex}=[circle, draw, inner sep=0pt,minimum size=6pt, fill=black]
\newcommand{\vertex}{\node[vertex]}
\tikzstyle{vertex}=[circle, draw, fill=black, inner sep=0pt, minimum size=4pt]
\tikzstyle{redvertex}=[circle, draw, red, fill=red, inner sep=0pt, minimum size=4pt]
\tikzstyle{bluevertex}=[circle, draw, cyan, fill=cyan, inner sep=0pt, minimum size=4pt]
\tikzstyle{lnode}=[circle,white,draw, inner sep=1pt, font=\scriptsize]
\tikzstyle{edge}=[line width=1pt]
\begin{document}

\title{Computing maximum likelihood thresholds using graph rigidity}

\author{Daniel Irving Bernstein\thanks{Department of Mathematics,
Tulane University, USA, dbernstein1@tulane.edu}
\and
Sean Dewar\thanks{School of Mathematics, University of Bristol, UK, sean.dewar@bristol.ac.uk}
\and Steven J. Gortler\thanks{School of Engineering 
and Applied Sciences,
Harvard University, USA, sjg@cs.harvard.edu}
\and Anthony Nixon\thanks{Department of Mathematics and Statistics,
Lancaster University, UK, a.nixon@lancaster.ac.uk}
\and Meera Sitharam\thanks{Department of Computer Science,
University of Florida, USA, sitharam@cise.ufl.edu}
\and Louis Theran\thanks{School of Mathematics and Statistics,
University of St Andrews, UK, louis.theran@st-andrews.ac.uk }
}


\date{}
\maketitle

\begin{abstract}
The maximum likelihood threshold (MLT) of a graph $G$ is the minimum number of samples to almost 
surely guarantee existence of the maximum likelihood estimate in the corresponding Gaussian graphical 
model. Recently a new characterization of the MLT in terms of rigidity-theoretic properties of 
$G$ was proved \cite{Betal}. This characterization was then used to give new combinatorial lower bounds on the MLT of any graph.  
We continue this line of research by exploiting combinatorial rigidity results to compute the MLT precisely for several families of graphs. These include graphs with at most $9$ vertices, graphs with at most 24 edges, every graph sufficiently close to a complete graph and graphs with bounded degrees.
\end{abstract}

\medskip

\noindent \textbf{Mathematics Subject Classification:} 62H12, 52C25.\\
\textbf{Key words and phrases:} Gaussian graphical models, maximum likelihood threshold, combinatorial rigidity, generic completion rank.

\section{Introduction}

Let $G$ be a graph with $n$ vertices.  The Gaussian graphical model
associated with $G$ is the set of $n$-variate normal distributions 
$\mathcal{N}(0,\Sigma)$ so that if $ij$ is \emph{not} an edge of $G$,
then $(\Sigma^{-1})_{ij} = 0$, i.e.~the corresponding 
random variables are conditionally independent
given all of the other random variables.

A question, originally posed by Dempster \cite{dempster1972covariance},
that has gotten a lot of attention after Uhler's foundational 
work \cite{uhler2012geometry} on the topic is: 
\emph{for a fixed graph $G$, how many datapoints%
\footnote{We assume that the samples are i.i.d.~from a distribution whose probability measure is mutually absolutely 
 continuous with respect to Lebesgue measure.} %
are needed for the maximum likelihood estimator of the associated Gaussian graphical model to exist almost surely?}.  
This minimum number of 
datapoints is called the \emph{maximum likelihood threshold (MLT) of $G$}, 
which we denote $\mlt(G)$.

Efficiently computing the maximum likelihood threshold of an arbitrary 
graph seems out of reach to current techniques.  Instead most of the 
literature focuses on providing combinatorial bounds on $\mlt(G)$.  The 
basic tool is an algebraic graph parameter called the 
\emph{generic completion rank (GCR)} of $G$, denoted $\gcr(G)$.  There 
are various equivalent definitions of $\gcr(G)$.  For our 
purposes, the right one is due to Gross and Sullivant 
\cite{gross2018maximum}, who showed that $\gcr(G) = d+1$, 
where $d$ is the smallest dimension such that a generic 
geometric structure made of rigid bars connected at freely 
rotating joints does not support an equilibrium stress.  (Full 
definitions are given below.)

While $\mlt(K_n) = n$, much lower numbers can be achieved. The following basic lemma is a starting point for an arbitrary graph.

\begin{lemma}\label{lem: mlt monotone}
Let $G$ be a graph and $H$ a subgraph of $G$.  Then $\mlt(H)\le \mlt(G)$.
\end{lemma}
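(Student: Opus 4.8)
My plan is to use the standard matrix-completion description of when the maximum likelihood estimate exists, and to reduce the general statement to two elementary moves on graphs. Recall that for a sample with sample covariance matrix $S \in \SS^{V(G)}$, the MLE in the Gaussian graphical model of $G$ exists precisely when the partial matrix $S|_G$ — the diagonal entries of $S$ together with the entries $S_{ij}$ for $ij \in E(G)$ — extends to a positive definite matrix (this is the characterization underlying the results discussed above). Next, observe that an arbitrary subgraph $H$ of $G$ is obtained from $G$ by first deleting the edges of $E(G) \setminus E(H)$ one at a time, and then, since the vertices of $V(G) \setminus V(H)$ have by that point become isolated, deleting those one at a time. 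Hence it suffices to prove $\mlt(H) \le \mlt(G)$ in the two cases $H = G - e$ for an edge $e$ (with $V(H) = V(G)$) and $H = G - v$ for an isolated vertex $v$.

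For an edge deletion, the observation is simply that a positive definite completion of $S|_G$ is a fortiori a positive definite completion of $S|_{G-e}$, because the constraints defining the latter form a subset of those defining the former. So, for every sample, existence of the MLE for $G$ forces existence of the MLE for $G-e$; taking $\mlt(G)$ i.i.d.\ samples from an arbitrary admissible distribution (one mutually absolutely continuous with respect to Lebesgue measure), the first event is almost sure and hence so is the second, giving $\mlt(G-e) \le \mlt(G)$. For an isolated vertex $v$, I would couple the two sampling problems: fix an admissible distribution $\mu$ on $\RR^{V(G-v)}$, set $n = \mlt(G)$, draw $Y_1, \dots, Y_n$ i.i.d.\ from $\mu$, and append to each an independent $\mathcal N(0,1)$ coordinate to obtain $X_1, \dots, X_n$, which are i.i.d.\ from an admissible distribution on $\RR^{V(G)}$. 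By definition of $\mlt(G)$, almost surely the MLE for $G$ exists for $X_1, \dots, X_n$; let $\Sigma$ be a positive definite completion of $S_X|_G$. Since $v$ is isolated, $S_X|_G$ constrains only the diagonal and the entries indexed by $E(G-v)$, and there it agrees with $S_Y|_{G-v}$ (appending an independent coordinate does not change the sample covariance on $V(G-v)$); therefore the principal submatrix of $\Sigma$ on $V(G-v)$, which is again positive definite, is a positive definite completion of $S_Y|_{G-v}$. Thus almost surely the MLE for $G-v$ exists for $Y_1, \dots, Y_n$, and since $\mu$ was arbitrary, $\mlt(G-v) \le n$.

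The only nontrivial ingredient is the completion characterization of MLE existence, so the thing to get right is that it is invoked correctly — in the vertex case, that a principal submatrix of a positive definite matrix is positive definite and carries exactly the entries one needs. If one prefers to avoid citing that characterization, the identical two moves go through against the description of $\mlt$ in terms of generic positive semidefinite matrices of a given rank, using only that a generic rank-$n$ matrix on $V(G)$ restricts on $V(G-v)$ to a generic matrix of rank $\min(n, |V(G-v)|)$. I do not anticipate a genuine obstacle beyond this bookkeeping.
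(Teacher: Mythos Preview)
The paper does not actually prove Lemma~\ref{lem: mlt monotone}; it is simply asserted as a ``basic lemma'' and used freely thereafter. So there is no paper proof to compare against, and your task is really to supply one.

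Your argument is correct. The reduction to single edge deletions followed by deletions of isolated vertices is clean, and both steps go through as you describe. For the edge step, any positive definite completion of $S|_G$ is automatically a completion of $S|_{G-e}$, so the implication is pointwise and the almost-sure statement follows immediately. For the isolated-vertex step your coupling is exactly right: appending an independent Gaussian coordinate produces an admissible distribution on $\RR^{V(G)}$, the sample covariance on the $V(G-v)$ block is unchanged, and the principal submatrix of the completion is positive definite and matches $S_Y|_{G-v}$ on the required entries. Nothing is missing here.

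One remark: the matrix-completion characterization you invoke is standard (it goes back to Dempster and is made precise in Uhler's work), but the present paper never states it explicitly. Your closing comment that the same two moves work directly against the ``generic PSD matrix of rank $n$'' formulation is therefore worth keeping, since that viewpoint is closer to how the paper itself reasons (via Theorem~\ref{thm: main mlt stress} and equilibrium stresses). Either route is fine; the content is the same monotonicity of the completion problem under shrinking the constraint set.
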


Uhler proved the following upper bound on the MLT.
\begin{thm}[\cite{uhler2012geometry}]\label{thm: uhler gcr mlt}
Let $G$ be a graph.  Then $\mlt(G)\le \gcr(G)$.
\end{thm}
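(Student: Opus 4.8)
The plan is to use the characterisation of $\gcr(G)$ coming from positive semidefinite matrix completion: $\gcr(G)$ is the minimum rank of a positive semidefinite completion of the partial matrix obtained from a \emph{generic} positive definite matrix by forgetting all entries not on the diagonal or on an edge of $G$ (this is one of the ``various equivalent definitions'' alluded to above). Write $m=|V(G)|$, let $\pi_G$ denote the linear map sending a symmetric $m\times m$ matrix to the tuple of its diagonal entries together with its entries indexed by edges of $G$, and set $\mathcal{C}_G := \pi_G(\SS^m_{\succ 0})$, the set of ``$G$-partial matrices'' admitting a positive definite completion. Since $\pi_G$ is a surjective linear map, $\mathcal{C}_G$ is a nonempty open (convex) subset of its target space. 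The starting point is the standard fact from the theory of regular exponential families (made explicit in this setting, e.g., in \cite{uhler2012geometry, gross2018maximum}): for a sample covariance matrix $S$, the MLE in the Gaussian graphical model of $G$ exists if and only if $\pi_G(S)\in\mathcal{C}_G$. It therefore suffices to show that if $n\ge\gcr(G)$, then the sample covariance of $n$ i.i.d.\ samples lies in $\pi_G^{-1}(\mathcal{C}_G)$ almost surely.

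First I would extract a single witness from the definition of $\gcr(G)$. For a generic positive definite $\Sigma$, the partial matrix $\pi_G(\Sigma)$ admits a positive semidefinite completion $\widetilde\Sigma$ of rank exactly $\gcr(G)$; since $\Sigma$ itself is a positive definite completion of $\pi_G(\Sigma)$, we have $\pi_G(\widetilde\Sigma)=\pi_G(\Sigma)\in\mathcal{C}_G$. Let $\mathcal{V}_n\subseteq\SS^m$ be the set of positive semidefinite matrices of rank at most $n$; its Zariski closure is the irreducible determinantal variety of rank-$\le n$ symmetric matrices, and $\mathcal{V}_n$ is a full-dimensional semialgebraic subset of it. For $n\ge\gcr(G)$ we have $\widetilde\Sigma\in\mathcal{V}_{\gcr(G)}\subseteq\mathcal{V}_n$, so $\pi_G^{-1}(\mathcal{C}_G)\cap\mathcal{V}_n$ is nonempty; being the intersection of $\mathcal{V}_n$ with the open set $\pi_G^{-1}(\mathcal{C}_G)$, it is a nonempty relatively open subset of $\mathcal{V}_n$. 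Moreover, because $\SS^m_{\succeq 0}=\overline{\SS^m_{\succ 0}}$, continuity of $\pi_G$ gives $\pi_G(\mathcal{V}_n)\subseteq\overline{\mathcal{C}_G}$, so for $S\in\mathcal{V}_n$ the failure $\pi_G(S)\notin\mathcal{C}_G$ forces $\pi_G(S)$ into the boundary $\partial\mathcal{C}_G$.

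The genericity step is then a dimension count. The ``bad'' locus is contained in $\mathcal{V}_n\cap\pi_G^{-1}(\partial\mathcal{C}_G)$, and $\partial\mathcal{C}_G$ is semialgebraic of dimension one less than the dimension of the target of $\pi_G$. The hypothesis $n\ge\gcr(G)$ is exactly what guarantees that $\pi_G$ restricted to $\mathcal{V}_n$ has full rank $|E(G)|+|V(G)|$ on a dense open subset of $\mathcal{V}_n$ (otherwise the generic positive definite $\Sigma$ above would have completion rank $<\gcr(G)$); over this locus $\pi_G$ is a submersion onto its target, so the preimage of the codimension-one set $\partial\mathcal{C}_G$ has codimension at least one in $\mathcal{V}_n$, while the complementary (non-submersion) locus is already of positive codimension. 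Hence the bad locus is a Lebesgue-null semialgebraic subset of $\mathcal{V}_n$. Now take $n$ i.i.d.\ samples from a distribution mutually absolutely continuous with respect to Lebesgue measure and let $S$ be the sample covariance matrix: if $n\ge m$ then $S\succ 0$ almost surely and the MLE exists trivially, and if $n<m$ then almost surely $S$ has rank $n$ and, being of Wishart type, has law absolutely continuous with respect to the natural volume measure on the rank-$n$ stratum of $\mathcal{V}_n$. In either case $S$ almost surely avoids the bad locus, so $\pi_G(S)\in\mathcal{C}_G$ and the MLE exists. This proves $\mlt(G)\le n$ for every $n\ge\gcr(G)$, hence $\mlt(G)\le\gcr(G)$.

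I expect the main obstacle to be the transition from Zariski-generic to almost-sure statements. The set $\mathcal{C}_G$ is only semialgebraic — its boundary is not Zariski closed — so one cannot simply say that a Zariski-generic $G$-partial matrix lies in $\mathcal{C}_G$; the fix is to pull everything back to $\mathcal{V}_n$, where the defect really is confined to a proper (semialgebraic) subvariety, and to use that the sample covariance has an absolutely continuous law on $\mathcal{V}_n$. Carrying this out rigorously requires (i) the dimension bound on $\mathcal{V}_n\cap\pi_G^{-1}(\partial\mathcal{C}_G)$, which rests on the generic full-rankness of $\pi_G|_{\mathcal{V}_n}$ equivalent to $n\ge\gcr(G)$, and (ii) the precise exponential-family input, namely that existence of the MLE is \emph{equivalent} to $\pi_G(S)\in\mathcal{C}_G$, with $\mathcal{C}_G$ exactly the interior of the convex support; both the regularity of the Gaussian graphical model as an exponential family and the openness of $\pi_G$ enter here.
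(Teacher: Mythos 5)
This theorem is not proved in the paper at all: it is quoted from Uhler \cite{uhler2012geometry} (see also Gross--Sullivant \cite{gross2018maximum}), so there is no in-paper argument to compare against. Your proposal is a correct reconstruction of the standard proof from those sources, and uses exactly their ingredients: the regular-exponential-family criterion that the MLE exists if and only if $\pi_G(S)$ lies in the open convex cone $\mathcal{C}_G=\pi_G(\SS^m_{\succ 0})$; the observation that $\pi_G(\mathcal{V}_n)\subseteq\overline{\mathcal{C}_G}$ so that failure confines $\pi_G(S)$ to $\partial\mathcal{C}_G$; and the dimension count showing that for $n\ge\gcr(G)$ the locus $\mathcal{V}_n\cap\pi_G^{-1}(\partial\mathcal{C}_G)$ is null in the rank-$n$ stratum, which the sample covariance avoids almost surely. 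The two facts you invoke without proof are standard but should be cited explicitly if this were written out: (i) the equivalence between the ``minimum rank of a PSD completion of a generic completable partial matrix'' definition of $\gcr$ and the dominance definition (equivalently, via the rigidity matrix, that $\gcr(G)-1$ is the least $d$ for which $G$ is $d$-independent, which is what makes ``$n\ge\gcr(G)$ implies $\pi_G|_{\mathcal{V}_n}$ is generically submersive'' legitimate, together with the padding argument that submersivity for rank $r$ persists for all ranks $n\ge r$); and (ii) the absolute continuity of the law of the rank-$n$ sample covariance with respect to the volume measure on the rank-$n$ stratum. Neither is a gap in substance.
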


To get lower bounds, the authors, in \cite{Betal}, introduced a new graph parameter
called the \emph{globally rigid subgraph number} of $G$, denoted 
$\grn^*(G)$.  This 
is the largest dimension $d$ so that $G$ contains a globally 
$d$-rigid subgraph on at least $d+2$ vertices. They showed that $\mlt(G)\ge \grn^*(G) + 2$. 

Rigorous experiments reported in \cite{Betal} indicate that, 
for sparse Erd\"os-Rényi random graphs, in fact, with 
high probability
\(
    \grn^*(G) + 2 = \gcr(G)
\)
which implies that, whp, $\gcr(G) = \mlt(G)$.
For sufficiently small MLT and GCR, 
this relationship is deterministic.
\begin{thm}[\cite{Betal}]\label{thm: equality of gcr and mlt}
If $G$ is a graph and $\mlt(G) \le 3$ or $\gcr(G) \le 4$, then $\mlt(G) = \gcr(G)$.
\end{thm}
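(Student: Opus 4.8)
\emph{Proof proposal.} The plan is to reduce the whole statement to the single implication, which I will call $(\star)$:
\[
\gcr(G)\ge 4 \ \Longrightarrow\ \mlt(G)\ge 4 .
\]
Granting $(\star)$, the theorem follows quickly. By Theorem~\ref{thm: uhler gcr mlt} we always have $\mlt(G)\le\gcr(G)$, so if $\gcr(G)=4$ then $(\star)$ gives $\mlt(G)=4=\gcr(G)$; and if $\mlt(G)\le 3$, then the contrapositive of $(\star)$ gives $\gcr(G)\le 3$, so it remains to treat the range $\gcr(G)\le 3$ (needed in both cases). Here I use that $\gcr(G)\le d+1$ holds exactly when $G$ is independent in the $d$-dimensional generic rigidity matroid \cite{gross2018maximum}: if $\gcr(G)=1$ then $G$ has no edges and $\mlt(G)=1$; if $\gcr(G)=2$ then $G$ is a forest with an edge, so $K_2\subseteq G$ and $\mlt(G)\ge\mlt(K_2)=2$ by Lemma~\ref{lem: mlt monotone}, whence $\mlt(G)=2$; and if $\gcr(G)=3$ then $G$ contains a cycle, which is globally $1$-rigid on at least $3$ vertices, so $\grn^*(G)\ge 1$ and $\mlt(G)\ge\grn^*(G)+2\ge 3$, whence $\mlt(G)=3$.

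It remains to prove $(\star)$. If $\gcr(G)\ge 4$ then $G$ is dependent in the $2$-dimensional generic rigidity matroid, hence contains a subgraph $C$ that is a circuit of this matroid; by Lemma~\ref{lem: mlt monotone} it suffices to show $\mlt(C)\ge 4$. Such a $C$ satisfies $|V(C)|\ge 4$ and $|E(C)|=2|V(C)|-2$, and it is redundantly $2$-rigid: being a circuit, $C-e$ is independent with $2|V(C)|-3$ edges for every $e\in E(C)$, hence a minimally rigid (Laman) graph, hence rigid. If in addition $C$ is $3$-connected, then $C$ is globally $2$-rigid by the Jackson--Jord\'an theorem, so $\grn^*(C)\ge 2$ and $\mlt(C)\ge\grn^*(C)+2\ge 4$, as wanted.

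The main obstacle is the remaining case, where $C$ is not $3$-connected. Then $C$ has a $2$-separation $\{a,b\}$ with $ab\notin E(C)$, and by the structure theory of $2$-dimensional rigidity circuits (Berg--Jord\'an) $C$ is the $2$-sum, along the ``virtual'' non-edge $ab$, of two strictly smaller rigidity circuits $C_1,C_2$. One cannot simply recurse, since $C_i+ab$ is not a subgraph of $C$ and subgraph monotonicity does not transfer the bound; nor can one pass to minors, because $\mlt$ is \emph{not} minor-monotone (for instance $\mlt(K_{3,3})=3<4=\mlt(K_4)$, while $K_4$ is a minor of $K_{3,3}$). To get past this I would use the positive-definite-completion description of the threshold: $\mlt(G)\le n$ holds if and only if, for a generic rank-$n$ positive semidefinite matrix $S$, the partial matrix consisting of the diagonal of $S$ together with the entries of $S$ indexed by $E(G)$ admits a positive definite completion. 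One then has to show that, for a generic rank-$3$ datum $S$, no positive definite completion of the pattern $C$ exists --- equivalently, that the rank-deficient eigenvalues of $S$ cannot all be moved to the positive side by perturbing only the entries indexed by the non-edges of the rigidity circuit $C$ --- organised as an induction on $|V(C)|$ with base case $K_4$ (where $\mlt(K_4)=4$) that uses the $2$-sum decomposition to pass from $C$ to $C_1+ab$ and $C_2+ab$. Carrying this through, i.e.\ showing that the $2$-sum structure of a rigidity circuit still obstructs positive definite completion at the critical rank even though global $2$-rigidity is no longer available, is the heart of the argument.
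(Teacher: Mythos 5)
Your reduction is sound as far as it goes: the case split via $(\star)$, the treatment of $\gcr(G)\le 3$ using Lemma~\ref{lem: mlt monotone} and the bound $\mlt(G)\ge\grn^*(G)+2$, and the passage from ``$G$ is $2$-dependent'' to a $2$-circuit $C$ with the $3$-connected case settled by Jackson--Jord\'an global rigidity are all correct. (This theorem is quoted from \cite{Betal}, so there is no in-text proof here to compare against; but what you have written so far matches the standard argument.) The problem is that you have not actually proved the statement: the non-$3$-connected circuit case, which you correctly identify as ``the heart of the argument,'' is left as a plan rather than a proof. As written, your proposed route --- showing that a generic rank-$3$ partial matrix with pattern $C$ admits no positive definite completion, by an induction on the Berg--Jord\'an $2$-sum decomposition --- is not carried out, and it is also not clear it is the right invariant to induct on: non-existence of the MLE is not literally ``no PD completion exists'' but membership outside the interior of the completable cone, and you give no mechanism for transferring that property across a $2$-sum. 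So there is a genuine gap.

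The gap can be closed with tools already in this paper, and more cheaply than by your completion argument. Strengthen the inductive claim to: \emph{every $2$-circuit $C$ has a generic framework in $\RR^2$ supporting a non-zero PSD equilibrium stress}; by Theorem~\ref{thm: main mlt stress} this yields $\mlt(C)\ge 4$. If $C$ is $3$-connected it is globally $2$-rigid (as you argue), and Theorem~\ref{thm: gen ur} produces a PSD stress of rank $|V(C)|-3\ge 1$. If $C$ is not $3$-connected, Berg--Jord\'an writes $C$ as a deleted $2$-sum of two strictly smaller $2$-circuits $C_1,C_2$ along a virtual edge $ab$; by induction each $C_i$ has a generic planar framework with a non-zero PSD stress, and since the equilibrium stress of a circuit is unique up to scale and supported on every edge, it is non-zero on $ab$. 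Lemma~\ref{lem: psd clique sum} (with $k=d=2$) then glues these into a non-zero PSD stress on a generic framework of $C=(C_1\cup C_2)-ab$. Note that the induction hypothesis must be the stress statement, not merely $\mlt(C_i)\ge 4$: the gluing lemma needs a stress that is non-zero on the identified edge, which is why the ``circuit'' structure has to be carried through the induction.
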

For the general case, however, Blekherman and Sinn \cite{blekherman2019maximum} showed that a ``whp'' 
equality is the best one can hope for.
\begin{thm}[\cite{blekherman2019maximum}]
For complete bipartitute graphs $K_{m,m}$,
\[
    \mlt(K_{m,m}) = o(\gcr(K_{m,m}))\qquad \text{(as $m\to \infty$)}
\]
and, in particular
\[
    4 = \mlt(K_{5,5}) < \gcr(K_{5,5}) = 5.
\]
\end{thm}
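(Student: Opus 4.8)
The statement has two parts: the asymptotic gap $\mlt(K_{m,m})=o(\gcr(K_{m,m}))$, and the explicit values for $K_{5,5}$. I would dispose of the $\gcr$ side first, as it is elementary. Writing the vertex set of $K_{m,m}$ as $[m]\sqcup[m]$, a partial matrix of this pattern is a symmetric $2m\times 2m$ matrix with prescribed diagonal and prescribed $m\times m$ off-diagonal block $B$, and free within-part off-diagonal entries. Since any completion contains $B$ as a submatrix and a generic $B$ is invertible, $\gcr(K_{m,m})\ge m$; conversely, for $m\ge 3$ one completes to rank exactly $m$ by taking the bottom-right block equal to $B^{\top}A^{-1}B$ and choosing the $\binom m2$ free entries of $A$ to correct its $m$ diagonal entries (a generically solvable system because $\binom m2\ge m$). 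Thus $\gcr(K_{m,m})=m$ for $m\ge 3$; in particular $\gcr(K_{5,5})=5$, $\gcr(K_{4,4})=4$, and $\gcr(K_{m,m})\to\infty$. Everything now rests on an upper bound for $\mlt(K_{m,m})$.

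By the standard characterization of existence of the Gaussian MLE (see \cite{uhler2012geometry,gross2018maximum}), $\mlt(K_{m,m})\le n$ as soon as, for generic $Y\in\RR^{n\times 2m}$, the restriction of $Y^{\top}Y$ to the $K_{m,m}$-pattern admits a positive definite completion. Write $Y=[\,Y_1\mid Y_2\,]$ with $Y_1,Y_2\in\RR^{n\times m}$, let $\eta_1,\dots,\eta_m\in\RR^{n}$ be the columns of $Y_1$ and $\zeta_1,\dots,\zeta_m\in\RR^{n}$ those of $Y_2$. The plan is to produce, for a suitable $P\succ 0$ in $\RR^{n\times n}$, the completion
\[
M_P\;=\;\begin{pmatrix} Y_1^{\top}P^{-1}Y_1 & Y_1^{\top}Y_2\\[2pt] Y_2^{\top}Y_1 & Y_2^{\top}PY_2\end{pmatrix}\;+\;\operatorname{diag}(\delta),\qquad
\delta=\bigl(\|\eta_i\|^2-\eta_i^{\top}P^{-1}\eta_i\ ;\ \|\zeta_j\|^2-\zeta_j^{\top}P\zeta_j\bigr).
\]
The first term equals $G^{\top}G$ with $G=[\,P^{-1/2}Y_1\mid P^{1/2}Y_2\,]$, hence is positive semidefinite; the diagonal of $M_P$ is exactly $\operatorname{diag}(Y^{\top}Y)$ and its off-diagonal block is $Y_1^{\top}Y_2$, so $M_P$ is a completion of the required pattern, and $M_P\succ 0$ whenever $\delta>0$ entrywise, i.e.\ whenever $\eta_i^{\top}P^{-1}\eta_i<\|\eta_i\|^2$ and $\zeta_j^{\top}P\zeta_j<\|\zeta_j\|^2$ for all $i,j$.

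To find such a $P$ I would linearize: set $P=I+tS$ with $S\in\RR^{n\times n}$ symmetric and $t>0$ small. Then $\eta_i^{\top}P^{-1}\eta_i=\|\eta_i\|^2-t\,\iprod{S}{\eta_i\eta_i^{\top}}+O(t^2)$ and $\zeta_j^{\top}P\zeta_j=\|\zeta_j\|^2+t\,\iprod{S}{\zeta_j\zeta_j^{\top}}$, so both families of inequalities hold for all small $t>0$ provided $\iprod{S}{\eta_i\eta_i^{\top}}>0$ and $\iprod{S}{\zeta_j\zeta_j^{\top}}<0$ for all $i,j$ — a strict linear feasibility problem for $S$ in the space of symmetric matrices. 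By Gordan's theorem this is solvable unless $0$ lies in the convex hull of the $2m$ matrices $\{\eta_i\eta_i^{\top}\}_i\cup\{-\zeta_j\zeta_j^{\top}\}_j$; but for generic $Y$ these $2m$ symmetric rank-one matrices are linearly independent whenever $2m\le\binom{n+1}{2}$, which is the dimension of the space of symmetric $n\times n$ matrices (this is the non-degeneracy of generic points on the quadratic Veronese), and then no nontrivial convex combination of them vanishes. Hence $\mlt(K_{m,m})\le n$ whenever $\binom{n+1}{2}\ge 2m$, so $\mlt(K_{m,m})\le\bigl\lceil\tfrac12(\sqrt{16m+1}-1)\bigr\rceil=O(\sqrt m)$, giving $\mlt(K_{m,m})/\gcr(K_{m,m})=O(m^{-1/2})\to 0$, and in particular $\mlt(K_{5,5})\le 4$.

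Finally, for the reverse bound on $K_{5,5}$: since $\gcr(K_{4,4})=4\le 4$, Theorem~\ref{thm: equality of gcr and mlt} gives $\mlt(K_{4,4})=\gcr(K_{4,4})=4$, and as $K_{4,4}$ is a subgraph of $K_{5,5}$, Lemma~\ref{lem: mlt monotone} yields $\mlt(K_{5,5})\ge 4$. Combining, $4=\mlt(K_{5,5})<\gcr(K_{5,5})=5$. The step I expect to be the main obstacle is the linear-feasibility/Gordan argument: everything hinges on recognizing that positive definite completability of the generic $K_{m,m}$-pattern is governed by the convex geometry of the rank-one matrices $\eta_i\eta_i^{\top},\zeta_j\zeta_j^{\top}$, and then extracting the sharp threshold $\binom{n+1}{2}\ge 2m$ from Veronese non-degeneracy; the passage from the first-order choice $P=I+tS$ to an honest completion is then automatic, since $\frac{d}{dt}\big|_{0}\,\eta_i^{\top}(I+tS)^{-1}\eta_i=-\iprod{S}{\eta_i\eta_i^{\top}}<0$ (and likewise for the $\zeta_j$) forces the strict inequalities to persist for small $t>0$. (One in fact expects $\mlt(K_{m,m})=\min\{n:\binom{n+1}{2}\ge 2m\}$; the matching lower bound would require exhibiting a positive semidefinite equilibrium stress for generic configurations in $\RR^{n}$, which is more delicate and which the argument above sidesteps only for $m=5$.)
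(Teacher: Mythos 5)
The paper offers no proof of this statement: it is imported verbatim from Blekherman and Sinn \cite{blekherman2019maximum}, so your proposal must be measured against that source rather than against anything in the text. What you have written is, as far as I can check, a correct reconstruction of their argument run on the completion side. The chain ``MLE exists iff the generic partial Gram matrix is positive-definite completable, build the completion as $G^{\top}G$ with $G=[\,P^{-1/2}Y_1\mid P^{1/2}Y_2\,]$ plus a positive diagonal correction, linearize at $P=I$, and apply Gordan's theorem together with linear independence of $2m$ generic points on the quadratic Veronese when $2m\le\binom{n+1}{2}$'' is sound, and it is exactly dual to the stress picture the paper itself uses: your separating matrix $S$ is a quadric separating the two colour classes, and its nonexistence is equivalent to the cones spanned by $\{\eta_i\eta_i^{\top}\}$ and $\{\zeta_j\zeta_j^{\top}\}$ meeting, i.e.\ to a PSD equilibrium stress of $K_{m,m}$. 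The value $n=4$ at $m=5$, and the lower bound $\mlt(K_{5,5})\ge\mlt(K_{4,4})=4$ via Theorem~\ref{thm: equality of gcr and mlt} and Lemma~\ref{lem: mlt monotone}, are also fine.

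The one step that is asserted rather than proved is the upper bound $\gcr(K_{m,m})\le m$: ``generically solvable because $\binom{m}{2}\ge m$'' is only a parameter count, and parameter counts are exactly what fail for bipartite patterns (this is the content of the Bolker--Roth degeneracies, and of $\gcr(K_{2,2})=3\ne 2$). To close it, check that the map sending the free off-diagonal part of $A$ to $\operatorname{diag}(B^{\top}A^{-1}B)$ has surjective differential somewhere: at a generic diagonal $A$ the differential applied to a hollow symmetric $\dot A$ is $-\operatorname{diag}(B^{\top}A^{-1}\dot A A^{-1}B)$, i.e.\ the $m\times\binom{m}{2}$ matrix of off-diagonal Veronese coordinates of the columns of $A^{-1}B$, which has rank $m$ for generic $B$ and $m\ge 3$ by the same non-degeneracy argument you already invoke. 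Note also that for the theorem as stated you only need $\gcr(K_{4,4})\le 4$ and $\gcr(K_{5,5})\le 5$ from this construction; the asymptotic claim already follows from $\mlt(K_{m,m})=O(\sqrt m)$ together with the easy bound $\gcr(K_{m,m})\ge m$ coming from the rank of the generic off-diagonal block.
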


\subsection{Contributions}
The conjectural picture is that, for almost all $G$, we have 
$\mlt(G) = \gcr(G)$.  On the other hand, the Blekhmermann--Sinn 
examples show that
the two quantities can be 
arbitrarily far apart.  Against this, 
our Theorem \ref{thm: equality of gcr and mlt}
says that for very small GCR and MLT, the two quantities 
\emph{always coincide}.

It is then natural to ask how large or how sparse a graph $G$ needs
to be in order to have $\mlt(G) < \gcr(G)$.  That is the 
topic we take up here.  

Our first results imply that the Blekherman--Sinn example of $K_{5,5}$
is the smallest possible graph where the MLT and GCR do not coincide.
No graph with fewer vertices can have this property.
\begin{thm}\label{THM:SMALL}
Let $G$ a graph on at most $9$ vertices. Then $\mlt(G) = \gcr(G)$.
\end{thm}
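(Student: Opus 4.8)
The overall plan is to combine the inequality $\mlt(G)\le\gcr(G)$ of Theorem~\ref{thm: uhler gcr mlt} with the lower bounds coming from \cite{Betal}, reducing the problem to a finite check on small rigidity circuits. Since $\mlt(G)\le\gcr(G)$ always holds, it suffices to prove $\mlt(G)\ge\gcr(G)$. By Theorem~\ref{thm: equality of gcr and mlt} we may assume $k:=\gcr(G)\ge 5$, and as $\gcr(G)\le|V(G)|\le 9$ we have $k\in\{5,6,7,8,9\}$; set $d:=k-2\in\{3,4,5,6,7\}$. By the Gross--Sullivant description of $\gcr$, the condition $\gcr(G)=k$ says precisely that $G$ is generically dependent in the $d$-dimensional rigidity matroid $\mathcal{R}_d$, so $G$ contains a circuit $C$ of $\mathcal{R}_d$. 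Any such circuit has at least $d+2=k$ vertices, since a graph on at most $d+1$ vertices is a subgraph of the $\mathcal{R}_d$-isostatic graph $K_{d+1}$ and hence $\mathcal{R}_d$-independent; thus $k\le|V(C)|\le 9$. Because $C$ is $\mathcal{R}_d$-dependent and $\gcr$ is monotone under subgraphs, $k\le\gcr(C)\le\gcr(G)=k$, so $\gcr(C)=k$; and by Lemma~\ref{lem: mlt monotone}, $\mlt(G)\ge\mlt(C)$. The theorem therefore reduces to the finite statement: every circuit $C$ of $\mathcal{R}_d$ with $d\ge 3$ and $|V(C)|\le 9$ satisfies $\mlt(C)=\gcr(C)$.

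To handle this I would invoke the structure theory of rigidity circuits. When $d$ is large compared with the $9$-vertex bound the circuit $C$ is forced to be dense --- within a bounded number of edges of $K_{d+2}$ --- and a typical such $C$ has a universal vertex, hence is a cone $C=C'\ast\{w\}$ over a circuit $C'$ of $\mathcal{R}_{d-1}$ on one fewer vertex. Coning is compatible with $\gcr$ (it increases it by $1$) and, as one checks, with $\mlt$, so these cases reduce inductively to lower dimension. The crucial case is $d=3$, where one needs a classification of circuits of $\mathcal{R}_3$ on at most $9$ vertices. Such a circuit is either generically $3$-rigid --- in which case it is redundantly $3$-rigid (deleting any edge leaves an isostatic graph), so that $C$ itself, or, where $4$-connectivity fails, a suitable subgraph on at least $5$ vertices, is generically globally $3$-rigid (verified via Connelly's stress-matrix criterion, feasible for these finitely many small graphs), giving $\grn^*(C)\ge 3$ and hence $\mlt(C)\ge 5=\gcr(C)$ --- or generically flexible, the prototype being the double banana on $8$ vertices.

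The main obstacle is exactly the generically flexible circuits of $\mathcal{R}_3$ on $\le 9$ vertices --- the double banana and the few related graphs. For these one has $\grn^*(C)+2<\gcr(C)$ (the double banana contains no generically globally $3$-rigid subgraph on $5$ or more vertices), so the combinatorial bound $\mlt\ge\grn^*+2$ does not suffice and one must appeal directly to the full \cite{Betal} rigidity-theoretic characterization of the maximum likelihood threshold to certify $\mlt(C)=5$. I expect the work to concentrate on (i) nailing down this finite list --- the structure of $\mathcal{R}_3$-circuits is not understood in general but is tractable on at most $9$ vertices --- and (ii) verifying the \cite{Betal} condition on each flexible member of the list. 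With $d=3$ settled, the higher-dimensional cases close via the coning reduction together with the density of the surviving circuits, and the range $\gcr(G)\le 4$ is already covered by Theorem~\ref{thm: equality of gcr and mlt}, completing the proof.
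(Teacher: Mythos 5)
Your reduction to circuits is correct and matches the paper's opening move: by Theorem~\ref{thm: uhler gcr mlt} and Lemma~\ref{lem: mlt monotone} it suffices to show $\mlt(C)\ge d+2$ for every $d$-circuit $C$ on at most $9$ vertices, and Theorem~\ref{thm: equality of gcr and mlt} together with the $d+4$-vertex case (the paper's Lemma~\ref{lemma: d circuit d+4 vertices}) leaves only $d=3$, $n\in\{8,9\}$ and $d=4$, $n=9$. But the substance of your argument for these remaining cases contains genuine gaps. First, the coning reduction you propose for the higher-dimensional cases does not go through: a $4$-circuit on $9$ vertices need not have a universal vertex --- the $6$-regular graphs on $9$ vertices have $27=4\cdot 9-10+1$ edges and maximum degree $6<8$, and the paper must handle exactly these by an explicit computational verification that they are globally $4$-rigid (Lemma~\ref{lemma: 6 regular}), alongside a separate structural argument (Lemma~\ref{lem:d+5circcut}) for circuits on $d+5$ vertices with a vertex of degree $d+1$.

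Second, for $d=3$ your claim that a $3$-rigid $3$-circuit on at most $9$ vertices always contains a globally $3$-rigid subgraph on at least $5$ vertices, so that $\grn^*(C)+2\ge 5$ suffices, is not justified and appears false: for example the circuit obtained by gluing three copies of $K_5$ at a common triple of vertices and deleting two edges of that triangle (Figure~\ref{fig:3k5s}(a)) is rigid but not $4$-connected, and its $K_5$-blocks each lose an edge, leaving only isostatic (hence not globally rigid) candidates; the paper certifies this graph by directly constructing a PSD equilibrium stress via affine gluing. More generally the paper's proof of Lemma~\ref{lem:3d9circcut} does not rely on $\grn^*$ at all but on a machinery of PSD-stress-preserving $1$-extensions (Lemma~\ref{lem:1ext}), deleted clique sums (Lemma~\ref{lem: psd clique sum}), and explicit frameworks --- none of which your sketch supplies. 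Finally, for the flexible circuits you correctly flag the double banana as the obstruction to the $\grn^*$ bound, but "appeal directly to the full characterization" is not an argument; the missing ingredient is the classification (from the flexible-circuits paper cited as \cite{grasegger2020flexible}) of flexible $d$-circuits on at most $9$ vertices as deleted clique sums of two rigid circuits, after which the PSD-stress gluing lemma closes the case. As it stands your proposal establishes the correct reduction but leaves the core of the theorem unproved.
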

Neither can a graph with fewer edges.
\begin{thm}\label{thm:atmost24}
    Let $G$ be a graph with at most 24 edges.
    Then $\mlt(G) = \gcr (G)$.
\end{thm}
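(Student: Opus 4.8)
The plan is to combine the inequality $\mlt(G)\le \gcr(G)$ of Theorem~\ref{thm: uhler gcr mlt}, the monotonicity of Lemma~\ref{lem: mlt monotone}, the small-GCR equality of Theorem~\ref{thm: equality of gcr and mlt}, the lower bound $\mlt(G)\ge \grn^*(G)+2$ of \cite{Betal}, and---most importantly---Theorem~\ref{THM:SMALL}, so that the only remaining work is a short edge-count argument followed by a finite check. By Theorem~\ref{thm: uhler gcr mlt} it suffices to prove $\mlt(G)\ge \gcr(G)$, and by Theorem~\ref{thm: equality of gcr and mlt} we may assume $\gcr(G)=d+1\ge 5$, i.e.\ $d\ge 4$. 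By the generic-completion-rank description of Gross and Sullivant \cite{gross2018maximum}, $\gcr(G)=d+1$ means precisely that the edge set of $G$ is independent in the generic $d$-dimensional rigidity matroid $\mathcal R_d$ but dependent in $\mathcal R_{d-1}$; fix a circuit $C$ of $\mathcal R_{d-1}$ with $C\subseteq G$. Since independence is hereditary, $C$ is $\mathcal R_d$-independent as well, so $\gcr(C)=d+1$, and by Lemma~\ref{lem: mlt monotone} it is enough to prove $\mlt(C)=d+1$.

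The next step is to bound the size of $C$. A $0$-extension argument---adding a vertex of degree at most $d-1$ to an $\mathcal R_{d-1}$-independent graph preserves independence---shows that every vertex of $C$ has degree at least $d$; and as the smallest circuit of $\mathcal R_{d-1}$ is $K_{d+1}$, the graph $C$ also has at least $d+1$ vertices. Hence $24\ge |E(G)|\ge |E(C)|\ge \binom{d+1}{2}$, which forces $d\le 6$, so $\gcr(G)\in\{5,6,7\}$. The same degree bound gives $|E(C)|\ge \tfrac{d}{2}|V(C)|$, so $|V(C)|\le 9$ whenever $d\ge 5$; and if $C$ is rigid in $\RR^{d-1}$ then $|E(C)|=(d-1)|V(C)|-\binom{d}{2}+1$, which together with $|E(C)|\le 24$ again gives $|V(C)|\le 9$, even for $d=4$. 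Thus the only circuits $C$ with more than $9$ vertices are non-rigid circuits of $\mathcal R_3$, and the minimum-degree bound already caps those at $12$ vertices.

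To conclude: if $|V(C)|\le 9$, then Theorem~\ref{THM:SMALL} applied to $C$ gives $\mlt(C)=\gcr(C)=d+1$, hence $\mlt(G)\ge \mlt(C)=d+1=\gcr(G)\ge \mlt(G)$, and we are done. For the remaining non-rigid $\mathcal R_3$-circuits $C$ with $10\le |V(C)|\le 12$, I would enumerate them: using the structure theory of $\mathcal R_3$-circuits (a non-rigid one admits a $2$-vertex separation) together with the budget $|E(C)|\le 24$, the possibilities reduce to a short explicit list of $10$-vertex graphs---essentially the $2$-sums of two rigid $\mathcal R_3$-circuits whose vertex sets have sizes $\{5,7\}$ or $\{6,6\}$. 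Each member of this list I would dispatch by hand, e.g.\ by exhibiting a globally $3$-rigid subgraph on at least $5$ vertices and applying $\mlt\ge\grn^*+2$, or, where that fails, by computing the MLT directly.

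The bookkeeping in the first two paragraphs is routine once one has the translation between $\gcr$ and rigidity-matroid independence and the minimum-degree property of circuits. I expect the main obstacle to be the last step: for the finitely many $10$-vertex, at-most-$24$-edge non-rigid $\mathcal R_3$-circuits, Theorem~\ref{THM:SMALL} no longer applies, and---just as the double banana contains no globally $3$-rigid subgraph on $5$ vertices---the lower bound $\grn^*+2$ need not suffice either, so the equality $\mlt=\gcr$ must be verified graph by graph. This is also exactly where the hypothesis ``at most $24$ edges'' is sharp: $K_{5,5}$ has $25$ edges and $\gcr(K_{5,5})=5>4=\mlt(K_{5,5})$, and $K_{5,5}$ is precisely a rigid $\mathcal R_3$-circuit on $10$ vertices---one vertex beyond the reach of Theorem~\ref{THM:SMALL}---so a single extra edge already permits $\mlt<\gcr$.
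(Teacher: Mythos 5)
Your opening reductions are sound and essentially track the paper's: pass to a circuit $C$ via Lemma~\ref{lem: mlt monotone}, use the fact that an $e$-circuit has minimum degree at least $e+1$ together with the edge budget to force the circuit dimension down to $3$ and the vertex count down to at most $12$, and dispatch everything on at most $9$ vertices with Theorem~\ref{THM:SMALL}. The genuine gap is the final step. You assert that ``the structure theory of $\mathcal{R}_3$-circuits'' shows a non-rigid $3$-circuit admits a $2$-vertex separation, so that the surviving cases reduce to a short list of $2$-sums of rigid $3$-circuits. No such theorem is available: the structure result the paper leans on from \cite{grasegger2020flexible} is proved only for circuits on at most $9$ vertices, and ruling out $3$-connected flexible $3$-circuits on $10$ or $11$ vertices with at most $24$ edges is precisely where the paper spends most of its effort --- the sparsity count via Theorem~\ref{thm:jj3sparse} to kill the $11$-vertex case, and the long Claim in Case~2 (finding a $1$-reduction at a degree-$4$ vertex preserving $(3,6)$-sparsity, then invoking Lemma~\ref{lem:1ext of 2sum}) to kill the $3$-connected $10$-vertex case. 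Your proposal replaces all of this with an unproved structural assertion, so the hardest part of the theorem is missing. (The $12$-vertex possibility, which is necessarily $4$-regular, also needs an argument; the paper absorbs it into Theorem~\ref{thm:degbounded}.)

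Even granting the enumeration, your plan for dispatching the surviving $2$-sums is underpowered. As you yourself observe, the bound $\mlt \ge \grn^*+2$ already fails for the double banana, and ``computing the MLT directly'' is not an available primitive --- the inaccessibility of such computations is the premise of the whole subject. The tool the paper actually uses here is Lemma~\ref{lem: psd clique sum} (gluing PSD equilibrium stresses of the two pieces across the deleted common edge) together with Theorem~\ref{thm: main mlt stress}, supplemented by Lemma~\ref{lem:1ext} when $1$-reductions are involved; your proof would need to identify and invoke something of this kind. Your bookkeeping forcing $d\le 6$ and $|V(C)|\le 9$ whenever the circuit dimension is at least $4$ is correct, and the remark that $K_{5,5}$ (a rigid $3$-circuit on $10$ vertices with $25$ edges) shows sharpness is apt, but as written the argument has a hole exactly where the theorem is hardest.
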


Turning to the general case, we prove that any graph 
which is nearly complete also must have equal MLT and GCR.
\begin{thm}\label{t:manyedges}
    Let $G$ be a graph whose complement has at most 5 edges.
    Then $\mlt(G)=\gcr(G) \geq n-2$.
\end{thm}
So must a graph with sufficiently small minimum and maximum degrees.
\begin{thm}\label{thm:degbounded}
    Let $G$ be a connected graph with minimum degree at most 4 and maximum degree at most 5.
    Then $\mlt(G) = \gcr(G) \leq 5$.
\end{thm}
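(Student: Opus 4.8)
The plan is to bracket $\mlt(G)$ between $\gcr(G)$ from above, using Theorem~\ref{thm: uhler gcr mlt}, and $\gcr(G)$ from below, using the $\grn^*$ bound of \cite{Betal}; the degree hypotheses are exactly what will force the lower bound to match. Concretely it suffices to prove (a) $\gcr(G)\le 5$ and (b) $\mlt(G)\ge\gcr(G)$.

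For (a) I would use the rigidity description of $\gcr$ from \cite{gross2018maximum}: $\gcr(G)=d+1$, where $d$ is the least dimension in which a generic framework on $G$ has no equilibrium stress, so $\gcr(G)\le 5$ as soon as $G$ is independent in the generic $4$-dimensional rigidity matroid $\mathcal R_4$. I would prove, by induction on $|V(G)|$, that every graph with maximum degree at most $5$ each of whose components contains a vertex of degree at most $4$ is independent in $\mathcal R_4$ (the case $|V(G)|\le 5$ being immediate, since then $G\subseteq K_5$). Pick a vertex $v$ of degree at most $4$; deleting it breaks its component into pieces each of which still contains a neighbour of $v$, hence a vertex of degree at most $4$, while the other components are unchanged, so by induction $G-v$ is independent in $\mathcal R_4$. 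Re-attaching $v$ adds a vertex of degree at most $4$ to an independent graph, a $0$-extension (or partial one), which preserves independence in $\mathcal R_4$. Since our $G$ is connected with $\delta(G)\le 4$ and $\Delta(G)\le 5$, this yields $\gcr(G)\le 5$.

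For (b), if $\gcr(G)\le 4$ then Theorem~\ref{thm: equality of gcr and mlt} already gives $\mlt(G)=\gcr(G)$, so assume $\gcr(G)=5$; by (a) this means precisely that $G$ is dependent in $\mathcal R_3$. The idea is to find a subgraph $H\subseteq G$ on at most $9$ vertices with $\gcr(H)=5$: then Theorem~\ref{THM:SMALL} gives $\mlt(H)=\gcr(H)=5$, and Lemma~\ref{lem: mlt monotone} gives $\mlt(G)\ge\mlt(H)=5=\gcr(G)$. I would take $H$ to be an $\mathcal R_3$-circuit inside $G$. Such an $H$ is connected (circuits of $\mathcal R_3$ lie in a single component), and every vertex of $H$ has degree at least $4$ in $H$: a vertex of degree at most $3$ could be removed by a reverse $0$-extension, forcing the proper subgraph $H-v$ to be $\mathcal R_3$-dependent, contrary to minimality; hence (as $H\subseteq G$) every vertex of $H$ has degree $4$ or $5$. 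If moreover $H$ is spanning, i.e.\ $|E(H)|=3|V(H)|-5$, then $6|V(H)|-10=2|E(H)|\le 5|V(H)|$ gives $|V(H)|\le 10$, and $|V(H)|=10$ would make $H$ a $5$-regular subgraph; but then the degree-$\le 4$ vertex of $G$ lies outside $V(H)$, so $V(H)\subsetneq V(G)$, and connectivity of $G$ forces some vertex of $V(H)$ to have a neighbour outside $V(H)$ and hence degree $\ge 6$ in $G$ --- impossible. So $5\le|V(H)|\le 9$; since $H$ has a vertex of degree at most $4$ and $\Delta(H)\le 5$, the claim of (a) applied to $H$ gives $H$ independent in $\mathcal R_4$, so $\gcr(H)\le 5$, while $H$ dependent in $\mathcal R_3$ gives $\gcr(H)\ge 5$; thus $\gcr(H)=5$ and the plan above finishes the proof.

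The step I expect to be the main obstacle is the claim that an $\mathcal R_3$-circuit of maximum degree at most $5$ is spanning. The prototype of a non-spanning $\mathcal R_3$-circuit is the double banana, which has a vertex of degree $6$, and I would try to show this is forced: a non-spanning $\mathcal R_3$-circuit $H$ should have a $2$-vertex separator $\{x,y\}$ with $xy\notin E(H)$ along which it decomposes as a ``$2$-sum'' $(H_1+xy)\oplus_{xy}(H_2+xy)$ of smaller $\mathcal R_3$-circuits. Since each $H_i+xy$ is itself an $\mathcal R_3$-circuit, $x$ has degree at least $4$ in it, so $\deg_{H_i}(x)\ge 3$, and $\deg_H(x)=\deg_{H_1}(x)+\deg_{H_2}(x)\ge 6$, contradicting $\Delta(H)\le 5$. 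Proving this decomposition --- essentially, that in dimension $3$ the only obstruction to a circuit being rigid is a hinge --- is the technical heart of the argument, and is where I would expect to need (or reprove) structural results on rigidity circuits; everything else is bookkeeping with the degree bounds. As a fallback, the finitely many $\mathcal R_3$-circuits with at most $10$ vertices and maximum degree at most $5$ could be enumerated and checked directly, leaning on Theorem~\ref{thm:atmost24} for those with at most $24$ edges.
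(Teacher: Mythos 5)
Your part (a) is correct and in fact more elementary than the paper's route: the paper extracts both the upper bound $\gcr(G)\le 5$ and the existence of a small $3$-circuit from a single sparsity characterization of Jackson and Jord\'an (Theorem~\ref{thm:jacksonjordan}), whereas your peel-off-a-low-degree-vertex induction gets $4$-independence directly from $0$-extensions. Your reduction of (b) to producing a $3$-circuit $H$ on at most $9$ vertices, and your treatment of the case where $H$ is rigid (spanning), are also sound. The genuine gap is exactly the step you flag: you need every $3$-circuit whose degrees all lie in $\{4,5\}$ to be rigid, and your argument for this presupposes that every flexible $3$-circuit decomposes as a $2$-sum of two $3$-circuits along a non-adjacent separating pair. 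That structural statement is only available for circuits on at most $d+6=9$ vertices (this is the content of \cite{grasegger2020flexible}, which the paper invokes elsewhere precisely under the hypothesis $n\le 9$); for flexible $3$-circuits on $10$ or more vertices no such decomposition theorem is known, and the general structure of flexible circuits in the $3$-dimensional rigidity matroid is not understood. Moreover, the degree bounds give no upper bound on the size of a flexible circuit ($2|V(H)|\le |E(H)|\le 3|V(H)|-7$ only forces $|V(H)|\ge 6$), so your fallback of enumerating circuits on at most $10$ vertices does not cover this case either.

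The paper closes exactly this hole by applying Theorem~\ref{thm:jacksonjordan} with $d=3$ to $G$ itself rather than to a circuit inside it. The degree hypotheses give $i(X)\le\tfrac12(5|X|-1)$ for every $X$, hence $i(X)\le 3|X|-6$ whenever $|X|\ge 10$; the theorem then says that if $G$ is $3$-dependent, some set $X$ with $5\le |X|\le 9$ must violate $(3,6)$-sparsity, and $G[X]$ immediately contains a $3$-circuit on at most $9$ vertices --- no information about the structure of flexible circuits is required, because the dependence is witnessed by a counting violation on a small set. Substituting this one observation for your Case 2 repairs the argument; the rest of what you wrote, including Theorem~\ref{THM:SMALL} applied to $H$ together with Lemma~\ref{lem: mlt monotone} and Theorem~\ref{thm: uhler gcr mlt}, then goes through unchanged.
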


We believe that Theorem \ref{t:manyedges} might not be 
best possible, in the sense that the same statement may hold with a 
number larger than $5$.  At the other extreme, $K_{5,5}$ shows 
that removing $20$ edges from $K_{10}$ does give a graph with 
different MLT and GCR.

Theorem \ref{THM:SMALL} is proved in Section~\ref{sec: many / few}, with the more technical parts of the proof deferred until Section \ref{app:b}, and Theorem \ref{thm:atmost24} is proved in Section \ref{sec:few edges}. Theorems \ref{t:manyedges} and \ref{thm:degbounded} are proved in Section \ref{sec:manybound}. First we give some preliminary results from graph rigidity and recap some necessary theory from \cite{Betal}.

\section{MLT and rigidity theory}

In this section we introduce the necessary background from rigidity theory, including results on equilibrium stresses and MLT.
Figure~\ref{fig:rigidity} illustrates the following definitions for $d=2$.

\subsection{Rigid graphs}

\begin{defn}\label{def: framework}
Let $d\in \NN$ be a dimension.  A \emph{framework in $\mathbb{R}^d$} is 
an ordered pair $(G,p)$ where $G$ is a graph with $n$ vertices $\{1, \ldots, n\}$ and 
$p = (p(1), \ldots, p(n))$ is a configuration of $n$ points in $\RR^d$.
Two frameworks $(G,p)$ and $(G,q)$ are \emph{equivalent} if 
\[
    \|p(j) - p(i)\| = \|q(j) - q(i)\|\qquad 
    \text{ for all edges $ij$ of $G$}
\]
and \emph{congruent} if $p$ and $q$ are related by a Euclidean isometry,
i.e.~if there exists a Euclidean isometry $T:\mathbb{R}^d\rightarrow\mathbb{R}^d$
such that $q(i)=T(p(i))$ for $i = 1,\dots,n$.
$(G,p)$ is \emph{globally rigid} in dimension $d$ if all equivalent $d$-dimensional 
frameworks are congruent.
If this happens only for some neighborhood $U$ around $p$,
i.e.~if $(G,p)$ and $(G,q)$ are congruent whenever $q \in U$ and $(G,q)$ and $(G,p)$ are equivalent,
then $(G,p)$ is said to be \emph{rigid} in dimension $d$.
\end{defn}

\begin{figure}[h]
    \centering
        \begin{tikzpicture}
        \vertex (1) at (0,0){};
        \vertex (2) at (1,0){};
        \vertex (3) at (1,1){};
        \vertex (4) at (0,1){};
        \path
            (1) edge (2) edge (4)
            (2) edge (3)
            (3) edge (4)
        ;
        \node at (1.7,1/2){$\leftrightarrow$};
        \vertex (1a) at (0+2.3,0){};
        \vertex (2a) at (1+2.3,0){};
        \vertex (3a) at ($ (3/2+2.3,{sqrt(3)/2}) $){};
        \vertex (4a) at ($ (1/2+2.3,{sqrt(3)/2}) $){};
        \path
            (1a) edge (2a) edge (4a)
            (2a) edge (3a)
            (3a) edge (4a)
        ;      
    \end{tikzpicture}
    \qquad\quad
    \begin{tikzpicture}
        \vertex (1c) at (0,0){};
        \vertex (2c) at (1,0){};
        \vertex (3c) at (.3,.9){};
        \vertex (4c) at (.9,.6){};
        \path
            (1c) edge (2c) edge (3c)
            (2c) edge (3c) edge (4c)
            (3c) edge (4c)
        ;
        \node at (1.7,1/2){$\leftrightarrow$};
        \vertex (1c) at (0+2.3,0){};
        \vertex (2c) at (1+2.3,0){};
        \vertex (3c) at (.3+2.3,.9){};
        \vertex (4c) at (.46+2.3,.3){};
        \path
            (1c) edge (2c) edge (3c)
            (2c) edge (3c)
            (4c) edge (3c) edge (2c)
        ;
    \end{tikzpicture}
    \qquad\quad
    \begin{tikzpicture}
        \vertex (a) at (0,0){};
        \vertex (b) at (1,0){};
        \vertex (c) at (0.5,1){};
        \vertex (d) at (0.5,0.5){};
        \path
            (a) edge (b) edge (c) edge (d)
            (b) edge (c) edge (d)
            (c) edge (d)
        ;
    \end{tikzpicture}
    \caption{The framework on the left fails to be rigid because there exist arbitrarily close frameworks that are equivalent but not congruent. The frameworks in the middle are rigid but fail to be globally rigid since they are equivalent but not congruent. Finally, the framework on the right is globally rigid and therefore also rigid.}
    \label{fig:rigidity}
\end{figure}
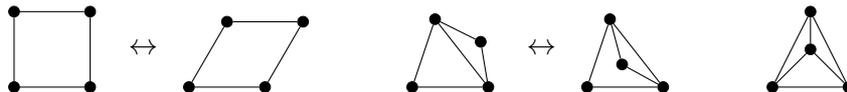

(Global) rigidity of a specific framework is difficult to check \cite{abbot,saxe1980embeddability}, but 
for each dimension $d$, every graph has a generic behavior.
Following \cite{uhler2012geometry,gross2018maximum}, 
we use the following notion of 
generic.
Let $p$ be a configuration of $n$ points in $\RR^d$.  We 
say that $p$ is \emph{generic} if the coordinates of $p$ do not 
satisfy any polynomial with rational coefficients.
Fundamentally, in the generic case, we can treat rigidity and global rigidity
as properties of a graph rather than as properties of a framework. Indeed by \cite{asimow1978rigidity,gortler2010characterizing}, if we let $d$ be a fixed dimension and $G$ a graph, then either 
every generic $d$-dimensional 
framework $(G,p)$ is (globally) rigid or every generic $d$-dimensional framework
$(G,p)$ is not (globally) rigid. Hence we call $G$ \emph{(globally) $d$-rigid} if its generic $d$-dimensional
frameworks are (globally) rigid. 

    Let $G$ be a graph with $n$ vertices and $m$ edges.
    The \emph{rigidity matrix} $R(G,p)$ of a $d$-dimensional framework $(G,p)$ is the $m\times dn$ matrix
    whose rows are indexed by the edges of $G$, columns indexed by the coordinates of $p(1),\dots,p(n)$,
    where the entry corresponding to edge $e$ and $p(v)_i$
    is $p(v)_i-p(u)_i$ if $e = vu$, and $0$ if $v$ is not incident to $e$.
    We call $G$ \emph{$d$-independent}
if the rows of $R(G,p)$ are linearly independent for a (or any) generic framework $(G,p)$.
In Figure~\ref{fig:rigidity}, the graphs underlying the frameworks in the middle and on the left are $2$-independent, whereas the graph of the framework on the right is not. 

Given vertices $i$ and $j$ of a graph $G$, we write $i\sim j$ to indicate that $G$ has an edge between $i$ and $j$.
Let $G$ be a graph with $n$ vertices.  Let $(G,p)$ be a framework.  An \emph{equilibrium stress}
$\omega$ of $(G,p)$ is 
an assignment of  weights $\omega_{ij}$ to the edges of $G$ so that,
for all vertices $i$
\[
    \sum_{j\sim i} \omega_{ij}(p(j) - p(i)) = 0 \qquad \text{(sum over neighbors of $i$).}
\]
The \emph{equilibrium stress matrix} associated to an equilibrium stress $\omega$ is the 
matrix $\Omega$ obtained by setting $\Omega_{ji} = \Omega_{ij} = -\omega_{ij}$ for all edges $ij$ of $G$,
$\Omega_{ii} = \sum_{j} \omega_{ij}$ and all other entries zero.
The \emph{rank} and \emph{signature} of $\omega$ are defined to be the rank 
and signature of $\Omega$, and $\omega$ is said to be \emph{PSD} if $\Omega$ is positive semi-definite.

Building on \cite{uhler2012geometry,gross2018maximum}, the following key result was proved in \cite{Betal} which reformulated the MLT of a graph in terms of \emph{equilibrium 
stresses}.

\begin{thm}\label{thm: main mlt stress}
Let $G$ be a graph with $n$ vertices. Then the MLT of $G$ is 
$d+1$ if and only if $d$ is the smallest dimension in which 
no generic $d$-dimensional framework supports non-zero 
PSD equilibrium stress.
\end{thm}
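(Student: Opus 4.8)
The plan is to chain together equivalences that convert ``$\mlt(G)\le N$'' into a statement about PSD equilibrium stresses, passing through positive-definite matrix completion and convex duality. First I would invoke the classical description of when the Gaussian MLE exists (see \cite{uhler2012geometry,gross2018maximum} and the references therein): for data with sample covariance matrix $S$, the MLE in the Gaussian graphical model of $G$ exists exactly when the partial matrix obtained from $S$ by retaining only the diagonal entries and the entries indexed by edges of $G$ admits a positive-definite completion; and $N$ generic samples (with $N\le n$) produce a generic positive-semidefinite $n\times n$ matrix of rank $N$. Together with the monotonicity of $\mlt$ in the number of samples, this identifies $\mlt(G)$ with the smallest $N$ such that the diagonal-plus-edges restriction of a generic rank-$N$ positive-semidefinite matrix has a positive-definite completion.

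\emph{Convex duality.} Fix a generic rank-$N$ positive-semidefinite $S$, written $S=P^{\top}P$ with $P$ a generic $N\times n$ matrix. The completions of the partial matrix of $S$ form the affine subspace $S+W$, where $W$ is the space of symmetric matrices supported on the off-diagonal non-edges of $G$, whose orthogonal complement (in the trace inner product) is precisely the space of symmetric matrices supported on the diagonal and edges of $G$. A separating-hyperplane argument, using self-duality of the positive-semidefinite cone, shows that $S+W$ misses the open positive-definite cone if and only if there is a nonzero positive-semidefinite matrix $\Omega$ supported on the diagonal and edges of $G$ with $\langle\Omega,S\rangle=0$; since $\Omega$ and $S$ are positive-semidefinite, $\langle\Omega,S\rangle=0$ forces $\Omega S=0$, and since $P$ has rank $N$ this is equivalent to $\Omega P^{\top}=0$, i.e.\ to $\Omega$ annihilating a generic $N$-dimensional subspace of $\RR^{n}$.

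\emph{Matching with stresses.} It remains to check that the existence of a nonzero positive-semidefinite symmetric matrix supported on the diagonal and edges of $G$ annihilating a generic $N$-dimensional subspace of $\RR^{n}$ is equivalent to the generic $(N-1)$-dimensional framework of $G$ supporting a nonzero PSD equilibrium stress. A generic $N$-dimensional subspace contains a vector $v$ with all coordinates nonzero; conjugating $S$ by $\diag(1/v_i)$ preserves being rank-$N$ positive-semidefinite and preserves existence of a positive-definite completion of the diagonal-plus-edges restriction, while moving the all-ones vector $\mathbf 1$ into the relevant subspace. One is thereby reduced to a generic $N$-dimensional subspace containing $\mathbf 1$, which is the column span of $[\,\mathbf 1\mid Q^{\top}\,]$ for the coordinate matrix $Q$ of a generic configuration in $\RR^{N-1}$; a symmetric matrix $\Omega$ supported on the diagonal and edges of $G$ annihilates this span if and only if it has zero row sums and $\Omega Q^{\top}=0$, which (setting $\omega_{ij}=-\Omega_{ij}$ on edges) says exactly that $\Omega$ is the stress matrix of an equilibrium stress of that framework. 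Setting $N=d+1$ and combining with the previous steps yields $\mlt(G)=d+1$ with $d$ the smallest dimension in which the generic framework of $G$ supports no nonzero PSD equilibrium stress. I would also record the short monotonicity lemma making this ``smallest $d$'' meaningful: truncating the last coordinate of a generic configuration in $\RR^{d+1}$ gives a generic configuration in $\RR^{d}$ with the same equilibrium stress matrices, so a PSD stress in dimension $d+1$ descends to dimension $d$.

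\emph{Main obstacle.} The convex-analysis input and the classical completion fact are routine or citable; the real care goes into the matching step -- getting the dimension bookkeeping right (the passage from an $N$-dimensional kernel to an $(N-1)$-dimensional framework), justifying the diagonal rescaling that inserts $\mathbf 1$, and confirming that genericity survives all of the factorizations, restrictions, and rescalings. As consistency checks I would verify the formula directly for $K_n$ (the unique generic stress in $\RR^{n-1}$ vanishes, but in $\RR^{n-2}$ a rank-one PSD stress built from the affine dependence appears, giving $\mlt(K_n)=n$) and confirm that it recovers the bound $\mlt(G)\ge\grn^*(G)+2$ from \cite{Betal} via Connelly-type PSD stress matrices of globally rigid subgraphs.
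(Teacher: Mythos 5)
The paper does not prove this statement: it is quoted verbatim as the main result of \cite{Betal} (building on \cite{uhler2012geometry,gross2018maximum}), so there is no internal proof to compare against. Your outline — Uhler's positive-definite completion criterion for existence of the MLE, the conic-duality/separation step producing a nonzero PSD matrix $\Omega$ supported on the diagonal and edges with $\Omega S=0$, and the diagonal rescaling that identifies annihilators of a generic $(d+1)$-dimensional subspace containing $\mathbf 1$ with PSD stress matrices of a generic $d$-dimensional framework, plus the coordinate-projection monotonicity in $d$ — is essentially the argument given in that cited source, and the steps you flag as needing care (genericity surviving the rescaling, and the fact that supporting a nonzero PSD stress is a generic property of the graph) are exactly the ones handled there.
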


We will make repeated use of the following theorem which will allow us to determine the MLT when $G$ is globally $d$-rigid.

\begin{thm}[\cite{cgt1}]\label{thm: gen ur}
Let $G$ be a graph with $n\ge d + 2$ vertices and $d$ a dimension.  If $G$ globally $d$-rigid, then there is a generic framework $(G,p)$ with a 
PSD equilibrium stress of rank $n - d - 1$.
\end{thm}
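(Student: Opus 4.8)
The plan is to recast the statement in the language of \emph{universal rigidity} — rigidity in $\RR^{d'}$ for all $d'\ge d$ — and then exhibit a generic universally rigid framework of $G$. Two standard facts link this to PSD stresses. By Connelly's sufficient condition for universal rigidity, a generic framework $(G,p)$ in $\RR^d$ carrying a PSD equilibrium stress matrix of rank $n-d-1$ is universally rigid; conversely, by the Gortler--Connelly characterisation of generic universal rigidity, a generic universally rigid $(G,p)$ must carry a PSD equilibrium stress matrix, and its rank cannot exceed $n-d-1$ because the kernel of any equilibrium stress matrix contains the all-ones vector together with the $d$ coordinate functions of $p$. So Theorem~\ref{thm: gen ur} is equivalent to the assertion that a graph $G$ which is globally $d$-rigid with $n\ge d+2$ vertices has \emph{some} generic framework in $\RR^d$ that is universally rigid, and this is the form I would prove.

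The entry point is the Gortler--Healy--Thurston theorem already invoked above for the genericity of global rigidity: since $G$ is globally $d$-rigid with $n\ge d+2$, \emph{every} generic framework $(G,p)$ in $\RR^d$ supports an equilibrium stress matrix $\Omega$ of the maximal possible rank $n-d-1$. The whole difficulty is that $\Omega$ need not be positive semidefinite: its signature can have a negative part, and since the rank stays at $n-d-1$ as $p$ moves through generic globally rigid configurations the signature is locally constant there, so no local deformation inside $\RR^d$ can remove the negative eigenvalues. Obtaining sign-definiteness is the crux, and must come from a global or dimension-changing move.

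A first attempt at definiteness is coning together with induction. The base case $n=d+2$ is clean: the only graph that is globally $d$-rigid on $d+2$ vertices is $K_{d+2}$ (any $d$-rigid graph there is $K_{d+2}$ or $K_{d+2}$ minus an edge, and the latter is not redundantly rigid since deleting a further edge leaves too few edges to be $d$-rigid), and a generic realisation of $K_{d+2}$ has a one-dimensional stress space spanned by $\mu\mu^{T}$, where $\mu$ is the unique affine dependence of the $d+2$ points; this is PSD of rank $1=n-d-1$. For the inductive step one would like to delete a vertex of degree $d+1$ while preserving global $d$-rigidity and then reinsert it, using the coning correspondence ($G$ generically globally $d$-rigid $\iff$ its cone $\widehat G$ generically globally $(d+1)$-rigid) and the fact that coning preserves PSD maximal-rank stresses. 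The obstacle here is real: the combinatorial structure of globally $d$-rigid graphs is understood only for $d\le 2$, so for larger $d$ there may be no deletable low-degree vertex and the induction breaks.

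To handle all $d$ one seems forced to argue directly with the spectrahedral cone $\mathcal{L}_p\cap\{\text{PSD}\}$ of PSD equilibrium stress matrices of $(G,p)$, and show that for a suitably chosen generic $p$ its relative interior contains a matrix of rank $n-d-1$ — equivalently, that $(G,p)$ is \emph{dimensionally rigid}, i.e.\ no framework equivalent to $(G,p)$ has strictly larger affine span. The plan would be: assume for contradiction that every generic $(G,p)$ admits an equivalent framework $(G,q)$ of strictly larger affine span, apply Connelly's averaging identity to see that the difference $p-q$ is a nontrivial infinitesimal flex of the higher-dimensional framework $(G,p+q)$, and combine this with the global $d$-rigidity of $G$ — likely through a conic-at-infinity argument controlling affine flexes — to reach a contradiction. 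I expect this final step, turning the combinatorial hypothesis of global $d$-rigidity into the metric statement of dimensional rigidity for a generic realisation, to be the heart of the proof.
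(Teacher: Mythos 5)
This theorem is not proved in the paper at all: it is imported wholesale from \cite{cgt1}, so the only fair comparison is with the proof given there. Your reformulation is the right one -- the statement is indeed equivalent, via Connelly's sufficient condition and the Gortler--Connelly characterisation of generic universal rigidity, to the assertion that a generically globally $d$-rigid graph on $n\ge d+2$ vertices has \emph{some} generic universally rigid (equivalently, super stable) framework in $\RR^d$ -- and your base case $K_{d+2}$ and your diagnosis of why a coning/vertex-deletion induction fails for $d\ge 3$ are both correct. But the proposal stops exactly where the proof has to begin. The final paragraph is a plan, not an argument: you assume every generic framework admits an equivalent framework of larger affine span, apply Connelly's averaging identity to get an infinitesimal flex of $(G,p+q)$, and then say you ``expect'' a contradiction with global $d$-rigidity. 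That contradiction is not available by these means. The averaged framework lives in a higher dimension and is not generic, so generic global $d$-rigidity (a statement about generic configurations in $\RR^d$) says nothing about it directly; and the statement you would be contradicting cannot be pointwise anyway, since it is genuinely possible for \emph{some} (even most) generic frameworks of a generically globally rigid graph to fail to be dimensionally rigid -- $C_4$ on a line already does this. So the heart of the theorem is missing, and the sketched route does not obviously close.

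For the record, the argument in \cite{cgt1} takes a different path that avoids this impasse entirely. Global $d$-rigidity gives $(d+1)$-connectivity by Hendrickson's theorem; the Lov\'asz--Saks--Schrijver construction then produces a general-position orthogonal representation of $G$ in $\RR^{n-d-1}$; Alfakih's theorem converts this into a PSD equilibrium stress matrix of rank exactly $n-d-1$ whose kernel yields a $d$-dimensional realisation in general position; and a final perturbation step, using the Gortler--Healy--Thurston fact that \emph{every} generic framework of a generically globally rigid graph carries a maximal-rank ($n-d-1$) stress, moves this general-position framework to a nearby generic one while keeping the stress PSD of the same rank. If you want to complete your write-up, you would either need to reproduce that chain (connectivity, orthogonal representations, Alfakih, genericity upgrade) or supply a genuinely new proof of the dimensional-rigidity step; as it stands the proposal identifies the difficulty accurately but does not resolve it.
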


The theorem will often be applied through the following corollary. $G$ is a \emph{$d$-circuit} if it is not $d$-independent, but every proper subgraph is.

\begin{cor}[\cite{Betal}]
\label{cor:gr+cir}
    Let $G$ be a $d$-circuit.  Then $\gcr(G)=d+2$.
    Further, if $G$ is globally $d$-rigid
    then $\mlt(G) = d+2$.
\end{cor}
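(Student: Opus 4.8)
The plan is to assemble this from the three earlier-stated ingredients: Uhler's upper bound (Theorem~\ref{thm: uhler gcr mlt}), the stress-based characterisation of MLT (Theorem~\ref{thm: main mlt stress}), and the existence result for PSD stresses in globally rigid graphs (Theorem~\ref{thm: gen ur}). For the first assertion, I would start from the definition of a $d$-circuit. If $G$ is a $d$-circuit, then $G$ is not $d$-independent but every proper subgraph is; I would use the standard combinatorial rigidity fact that a $d$-circuit is $d$-rigid (deleting any single edge leaves a $d$-independent graph on the same vertex set, hence one with $dn - \binom{d+1}{2}$ independent rows on $n$ vertices, which forces $d$-rigidity of $G$), so $G$ has exactly one equilibrium stress up to scaling and its rigidity matroid has rank $dn - \binom{d+1}{2}$. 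The characterisation of generic completion rank via equilibrium stresses (the Gross--Sullivant description recalled in the introduction, together with the fact that $G$ on $n \ge d+2$ vertices supporting a nonzero generic stress in dimension $d$ but none in dimension $d-1$ gives $\gcr(G) = d+2$) then yields $\gcr(G) = d+2$: a $d$-circuit supports a (unique) nonzero stress in dimension $d$, and being $d$-independent after an edge deletion it cannot be a circuit in lower dimension, so it supports no stress in dimension $d-1$.

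For the second assertion, suppose in addition that $G$ is globally $d$-rigid. Since a $d$-circuit on $n$ vertices necessarily has $n \ge d+2$ (a graph on at most $d+1$ vertices is $d$-independent), Theorem~\ref{thm: gen ur} applies and gives a generic framework $(G,p)$ carrying a PSD equilibrium stress of rank $n - d - 1 \ge 1$, in particular a nonzero PSD stress in dimension $d$. By Theorem~\ref{thm: main mlt stress} this shows $\mlt(G) \ge d + 2$ — indeed $d$ is not a dimension in which \emph{every} generic framework has no nonzero PSD stress. Combined with $\mlt(G) \le \gcr(G) = d+2$ from Theorem~\ref{thm: uhler gcr mlt} and the first part, we conclude $\mlt(G) = d+2$.

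The only genuinely non-routine point is the first part: justifying that a $d$-circuit has generic completion rank exactly $d+2$, i.e.\ that ``not $d$-independent but every proper subgraph is'' translates into ``supports a nonzero generic equilibrium stress in dimension $d$ but not in dimension $d-1$.'' The $d$-direction is immediate (a minimal dependence among the rows of $R(G,p)$ is exactly a nonzero stress, and a proper subgraph being independent means the dependence is supported on all edges). The $(d-1)$-direction requires knowing that a $d$-independent graph on $n$ vertices is also $(d-1)$-independent for the relevant count, or more precisely that $G$ minus an edge, being $d$-independent, forces $G$ itself to have no stress in dimension $d-1$; this is where one invokes the monotonicity of independence in the dimension parameter for the generic rigidity matroid. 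Everything after that is bookkeeping with the three cited theorems.
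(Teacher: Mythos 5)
Your treatment of the second assertion is correct and is exactly the intended route: a $d$-circuit has at least $d+2$ vertices (any graph on at most $d+1$ vertices is $d$-independent), so Theorem~\ref{thm: gen ur} produces a generic framework with a nonzero PSD equilibrium stress, Theorem~\ref{thm: main mlt stress} then gives $\mlt(G)\ge d+2$, and Theorem~\ref{thm: uhler gcr mlt} together with $\gcr(G)=d+2$ closes the sandwich. (The paper imports the whole corollary from \cite{Betal} without a written proof, but this is plainly the derivation it has in mind, and it is how the paper uses these ingredients elsewhere.)

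The first assertion, $\gcr(G)=d+2$, is where your proposal breaks down, in two places. First, the claim that every $d$-circuit is $d$-rigid is false: independence of $G-e$ only tells you that the rank of $G-e$ equals its number of edges, not that this number is $dn-\binom{d+1}{2}$. The double banana (the deleted $2$-sum of two copies of $K_5$), which this very paper invokes as a \emph{flexible} $3$-circuit, is the standard counterexample. This slip is not fatal on its own, since a circuit has a one-dimensional stress space simply by virtue of being a circuit, but the rigidity claim must go. Second, and more seriously, you have the dimension monotonicity inverted. Under the Gross--Sullivant description recalled in the introduction, $\gcr(G)=d+2$ means $d+1$ is the \emph{smallest} dimension in which $G$ supports no generic equilibrium stress; since graphs only become more independent as the dimension increases, the substantive fact to prove is that a $d$-circuit is $(d+1)$-\emph{independent}, not that it carries no stress in dimension $d-1$. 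Your auxiliary claims are false in themselves: a $d$-independent graph need not be $(d-1)$-independent ($K_4$ is $3$-independent but is a $2$-circuit), and $K_5$ is a $3$-circuit whose generic planar frameworks carry a three-dimensional space of stresses, so ``$G-e$ is $d$-independent'' certainly does not force $G$ to be stress-free in dimension $d-1$. Even if these claims held, they would bound $\gcr(G)$ from the wrong side. Note also that $(d+1)$-independence of $G$ does not follow just from the $(d+1)$-independence of every $G-e$, since restoring the deleted edge could in principle create a higher-dimensional dependence; one clean way to finish is to observe that the cone of $G$ is a $(d+1)$-circuit containing $G$ as a proper subgraph. Your proposal never addresses this, which is the only genuinely nontrivial content of the first assertion.
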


\subsection{Graph operations}

    A \emph{($d$-dimensional) $0$-extension} of a graph $G$ is the graph obtained from $G$ by adding a new vertex of degree $d$. A \emph{($d$-dimensional) $1$-extension} of $G$ is the graph obtained from $G$ by removing an edge $xy$, and adding a new vertex adjacent to $x$, $y$ and $d-1$ other vertices.
    The inverse of these operations are called \emph{($d$-dimensional) $0$- and $1$-reductions}.

The following lemma is relatively straightforward.

\begin{lemma}[{\cite[Lemma 11.1.1, Theorem 11.1.7]{Wlong}}]
\label{lem:0ext}
Let $G$ be $d$-independent and suppose that $G'$ is obtained from $G$ by a 0-extension or a 1-extension. Then $G'$ is $d$-independent.
\end{lemma}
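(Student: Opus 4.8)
The plan is to argue directly from the rigidity matrix, using that $d$-independence is a generic — hence Zariski-open — condition: the set of configurations $p'$ of $V(G')$ at which the rows of $R(G',p')$ are linearly independent is the non-vanishing locus of (at least) one maximal minor, a polynomial with integer coefficients in the coordinates, so it is either empty or contains every generic configuration. Thus it suffices to exhibit \emph{one} configuration $p'$, possibly non-generic, at which the rows of $R(G',p')$ are independent. Throughout I fix a generic configuration $p$ of $V(G)$ (so the rows of $R(G,p)$ are independent) and extend it suitably to $V(G')$.

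For the $0$-extension, let the new vertex $v$ be joined to $u_1,\dots,u_d$, and put $p'(v)=q$ for a point $q$ chosen so that $q-p(u_1),\dots,q-p(u_d)$ are linearly independent in $\RR^d$ (equivalently $q,p(u_1),\dots,p(u_d)$ are affinely independent); since $p$ is generic, generic $q$ works. The only rows of $R(G',p')$ with nonzero entries in the $d$ columns indexed by $v$ are those of the new edges $vu_i$, and restricted to those columns they form the matrix with rows $q-p(u_i)$, which is invertible. Hence in any linear dependence among the rows of $R(G',p')$ the coefficients of the new rows all vanish, leaving a dependence among the rows of $R(G,p)$, which is trivial by hypothesis.

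For the $1$-extension, delete $xy$ and add $v$ adjacent to $x,y,w_1,\dots,w_{d-1}$; keep $p'$ equal to $p$ on $V(G)$ and place $p'(v)=(1-t)p(x)+t\,p(y)$ with $t\in(0,1)$ generic. Writing $e:=p(y)-p(x)$, a short computation gives the ``folding identity'' $(1-t)\cdot(\text{row }vx)+t\cdot(\text{row }vy)=t(1-t)\cdot(\text{row }xy)$, where $\text{row }xy$ is the contribution edge $xy$ would make to $R(G,p)$. Given a dependence among the rows of $R(G',p')$, first restrict to the $v$-columns: arranging $e,\,p'(v)-p(w_1),\dots,p'(v)-p(w_{d-1})$ to be linearly independent (again generic in the free parameters), the coefficients of $vw_1,\dots,vw_{d-1}$ vanish and those of $vx,vy$ are forced into the ratio $t:(1-t)$, say $(1-t)s$ and $ts$ for a single scalar $s$. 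Substituting the folding identity turns the dependence into $s\,t(1-t)\cdot(\text{row }xy)+\sum_{f\in E(G)\setminus\{xy\}}c_f\,(\text{row }f)=0$, a dependence among the rows of $R(G,p)$; by $d$-independence of $G$ and $t(1-t)\ne 0$ we get $s=0$ and all $c_f=0$.

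The only delicate point — the expected obstacle — is checking that the required auxiliary genericity can be met: that with $p$ generic on $V(G)$ one can still choose $q$ (respectively $t$) making the relevant $d$ vectors spanning the $v$-columns linearly independent. This amounts to verifying that the corresponding determinant, viewed as a polynomial in the remaining free parameters, is not identically zero; in the $1$-extension case one must also note that imposing the single linear constraint placing $p'(v)$ on the line $xy$ leaves enough freedom for this. Everything else is linear algebra on the block structure of the rigidity matrix. (We need only the two extension operations here, not the converse reductions.)
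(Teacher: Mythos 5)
Your proof is correct, and it is essentially the standard argument from the reference the paper cites for this lemma (the paper itself gives no proof, only the citation to Whiteley): reduce to exhibiting a single witness configuration via the Zariski-openness of maximal-minor nonvanishing, kill the new rows through the $v$-columns for a $0$-extension, and for a $1$-extension place $v$ on the segment $xy$ and use the folding identity to recover the row of $xy$. The only cosmetic remark is that in the $1$-extension step no genericity of $t$ is actually needed beyond $t\neq 0,1$, since column operations show that $\det\bigl[e,\ p'(v)-p(w_1),\dots,p'(v)-p(w_{d-1})\bigr]=\det\bigl[e,\ p(x)-p(w_1),\dots,p(x)-p(w_{d-1})\bigr]$ is independent of $t$ and nonzero by affine independence of the $d+1$ generic points $p(x),p(y),p(w_1),\dots,p(w_{d-1})$.
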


More difficult is the fact that 1-extensions preserve global $d$-rigidity \cite{Bob05}. 
In fact one can preserve the existence of a PSD equilibrium stress of full rank \cite[Section 9]{Cquestions}. That proof can be adapted to obtain the following lemma which we will need. Details are provided in Appendix \ref{sec:app}.

\begin{lemma}\label{lem:1ext}
    Let $G=(V,E)$ a $d$-rigid graph that has a generic realization $(G,p)$ in $\mathbb{R}^d$ with a PSD equilibrium stress $\omega$ with rank $|V|-d-2$.
	Suppose that $G'=(V',E')$ is a $d$-rigid $d$-circuit obtained from $G$ by a 1-extension.
	Then there exists a generic framework $(G',p')$ with a PSD equilibrium stress of rank at least $|V'|-d-2$.
\end{lemma}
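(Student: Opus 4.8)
\noindent\emph{Plan.} The plan is to adapt the collinear-placement argument of \cite[Section~9]{Cquestions} that $1$-extensions preserve full-rank PSD stresses, carrying the bookkeeping one rank below the maximum. First I would record what the hypotheses force. Write $n=|V|$, and say the $1$-extension deletes $xy\in E$ and adds a vertex $w$ adjacent to $x,y,c_1,\dots,c_{d-1}$. Since $|E'|=|E|+d$ and, $G'$ being a $d$-circuit, $|E'|=d(n+1)-\binom{d+1}{2}+1$, we get $|E|=dn-\binom{d+1}{2}+1$; thus the generic stress space of the $d$-rigid graph $G$ is $1$-dimensional and spanned by $\omega$, while $G'-w=G-xy$, being a proper subset of the circuit $E'$, is $d$-independent with $dn-\binom{d+1}{2}$ edges, hence minimally $d$-rigid, so $(G-xy,p)$ is infinitesimally rigid. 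If $\omega_{xy}=0$ then $\omega$ restricts to a self-stress of this minimally rigid graph, forcing $\omega=0$, $\rank\Omega=0$, $n=d+2$, $G=K_{d+2}$ — a boundary case to be dealt with separately — so I may assume $n\ge d+3$ and $\omega_{xy}\neq 0$. Lastly, if $G'$ is globally $d$-rigid then Theorem~\ref{thm: gen ur} already supplies a generic framework with a PSD stress of rank $|V'|-d-1\ge|V'|-d-2$ and we are done; otherwise, by Connelly's theorem that a generic framework admitting a stress matrix of rank $|V'|-d-1$ is globally rigid \cite{Bob05}, the stress matrix of $G'$ at any generic configuration has rank at most $|V'|-d-2$.

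Next I would build a special configuration $q$ of $G'$: set $q(u)=p(u)$ for $u\in V$, place $q(w)=(1-s)p(x)+s\,p(y)$ on the line through $p(x),p(y)$, and extend $\omega$ by $\bar\omega_e=\omega_e$ on $E\setminus\{xy\}$, $\bar\omega_{wc_i}=0$, $\bar\omega_{wx}=\omega_{xy}/s$ and $\bar\omega_{wy}=\omega_{xy}/(1-s)$. Collinearity makes the verification that $\bar\omega$ is an equilibrium stress of $(G',q)$ routine (equilibrium at $x,y$ forces the coefficients on $wx,wy$; equilibrium at $w$ then holds automatically). Writing $\eta|_V^\top\Omega\,\eta|_V=\sum_{ij\in E(G)}\omega_{ij}(\eta_i-\eta_j)^2$ and $\eta^\top\bar\Omega\,\eta=\sum_{ij\in E'}\bar\omega_{ij}(\eta_i-\eta_j)^2$ for $\eta\in\RR^{V'}$ (with $\eta|_V$ the restriction to $V$), completing the square in the coordinate $\eta_w$ — using $\tfrac{s}{\omega_{xy}}+\tfrac{1-s}{\omega_{xy}}=\tfrac1{\omega_{xy}}$ — gives
\[
\eta^\top\bar\Omega\,\eta=\frac{\omega_{xy}}{s(1-s)}\bigl(\eta_w-(1-s)\eta_x-s\eta_y\bigr)^2+\eta|_V^\top\Omega\,\eta|_V .
\]
Choosing $s\in(0,1)$ if $\omega_{xy}>0$ and $s\notin[0,1]$ if $\omega_{xy}<0$ makes the leading coefficient positive, so $\bar\Omega\succeq0$ since $\Omega\succeq0$, while the invertible substitution $\eta_w\mapsto\eta_w-(1-s)\eta_x-s\eta_y$ shows $\rank\bar\Omega=1+\rank\Omega=|V'|-d-2$. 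I would also note that $(G',q)$ is infinitesimally rigid: as $p$ is generic, $p(x),p(y),p(c_1),\dots,p(c_{d-1})$ are affinely independent, so $\{q(w)-q(u):u\in\{x,y,c_1,\dots,c_{d-1}\}\}$ spans $\RR^d$, whence every infinitesimal flex of $(G',q)$ restricts to a trivial flex of the infinitesimally rigid $(G-xy,p)$ and is itself trivial.

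Finally I would upgrade $q$ to a generic configuration. On the open locus $U$ of configurations where $G'$ is infinitesimally rigid, the stress space is exactly $1$-dimensional (as $G'$ is a $d$-rigid $d$-circuit) and varies continuously, so near $q$ one may choose a continuous stress matrix $\Omega_r$ with $\Omega_q=\bar\Omega$. By lower semicontinuity of rank, $\rank\Omega_r\ge|V'|-d-2$ on a neighbourhood of $q$; by the reduction above, $\rank\Omega_r\le|V'|-d-2$ everywhere on $U$. Hence $\rank\Omega_r\equiv|V'|-d-2$ on a ball $B\subseteq U$ about $q$, so the signature of $\Omega_r$ is constant on $B$, equal to its value $(|V'|-d-2,0)$ at $q$; i.e.\ $\Omega_r\succeq0$ throughout $B$. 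Taking any $p'\in B$ generic over $\QQ$ then yields a generic framework $(G',p')$ with a PSD equilibrium stress of rank $|V'|-d-2$, as required.

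I expect this last step to be the main obstacle: positive semidefiniteness is a closed condition, so it cannot be transported off the special collinear configuration $q$ by continuity alone. What rescues the argument is that the (essentially unique) stress matrix has \emph{locally constant rank} near $q$, which then pins down its signature; guaranteeing this is exactly why the dichotomy on global $d$-rigidity of $G'$ — via Connelly's characterisation of generic global rigidity by stress-matrix rank — is forced into the proof, the non-globally-rigid case capping the generic rank at $|V'|-d-2$ and the globally rigid case being disposed of cheaply by Theorem~\ref{thm: gen ur}. A lesser, purely computational, point is choosing $s$ so that the leading coefficient is positive and $(G',q)$ is infinitesimally rigid simultaneously; the computation shows the latter holds automatically once $p$ is generic, leaving only the sign condition on $s$.
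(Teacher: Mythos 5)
Your proof is correct and follows the same overall skeleton as the paper's: place the new vertex on the segment $p(x)p(y)$, extend $\omega$ by $\omega_{xy}/s$ and $\omega_{xy}/(1-s)$ on the two new stressed edges and $0$ on the rest, verify the rank goes up by exactly one, and then transfer PSD-ness to a nearby generic configuration via the continuity of the (one-dimensional) stress space, with the dichotomy on global $d$-rigidity of $G'$ (Theorem~\ref{thm: gen ur} in one branch, Connelly's sufficiency capping the rank at $|V'|-d-2$ in the other) forcing locally constant rank and hence constant signature. Where you genuinely diverge is in how positive semidefiniteness of the extended stress matrix is established: the paper first applies a projective transformation (Lemma~\ref{lem: psd stress sign}) to normalise $\omega_{xy}=-1$ and then shows the $3\times 3$ block $M_t$ becomes PSD as $t\to\infty$, whereas you complete the square in $\eta_w$ to get the identity $\eta^\top\bar\Omega\,\eta=\tfrac{\omega_{xy}}{s(1-s)}\bigl(\eta_w-(1-s)\eta_x-s\eta_y\bigr)^2+\eta|_V^\top\Omega\,\eta|_V$ and simply choose $s$ so that $\omega_{xy}/(s(1-s))>0$. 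Your version is cleaner: it dispenses with the projective-transformation lemma entirely, handles both signs of $\omega_{xy}$ symmetrically, and delivers the rank computation ($\rank\bar\Omega=\rank\Omega+1$, via the unipotent change of variables) in the same stroke, where the paper needs a separate kernel argument. The one loose end is the boundary case $\omega_{xy}=0$, i.e.\ $|V|=d+2$ and $\omega=0$, which you flag but do not dispose of; it is harmless (there $G'$ is a $d$-rigid $d$-circuit on $d+3$ vertices, hence $(d+1)$-connected by Lemma~\ref{lemma: H_n}, globally $d$-rigid by Theorem~\ref{thm:tibor}, and Theorem~\ref{thm: gen ur} finishes), and the paper's own proof silently elides the same case, so you should just add that sentence.
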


\begin{defn}\label{def: clique sum}
A graph $G$ is a \emph{$k$-sum} of two induced 
subgraphs $G_1$ and $G_2$ each with at least $k+1$ vertices 
if $G$ is the union of $G_1$ and $G_2$ and  $G_1\cap G_2$ is isomorphic to $K_k$.
\end{defn}

Given a graph $G$ with edge $ij$, we let $G-ij$ denote the graph obtained from $G$ by deleting the edge $ij$.

\begin{lemma}[\cite{Betal}]
\label{lem: psd clique sum}
Let $1\le k\le d$ and $G$ be a $k$-sum of subgraphs $G_1$ and $G_2$
and $ij$ and edge of $G_1\cap G_2$.
Suppose that there are generic $d$-dimensional frameworks
$(G_1,p^1)$ and $(G_2,p^2)$ that, respectively, support 
non-zero PSD equilibrium stresses $\omega^1$ and $\omega^2$,
such that $\omega^k_{ij} \neq 0$ for $k = 1,2$.
Let $G' = G- ij$.
Then there is a generic $d$-dimensional framework $(G',p)$
that supports a non-zero PSD equilibrium stress.
\end{lemma}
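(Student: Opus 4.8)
The idea is to glue a copy of $(G_1,p^1)$ to a re-realisation of $G_2$ along the shared clique $S:=V(G_1)\cap V(G_2)\cong K_k$ and to add the two stress matrices. Write $\Omega^1,\Omega^2$ for the stress matrices of $\omega^1,\omega^2$, and let $\widehat\Omega^1,\widehat\Omega^2$ be their extensions by zero to matrices indexed by $V(G)=V(G_1)\cup V(G_2)$. The goal is to find a generic configuration $p$ of $V(G)$ such that, for a suitable $c>0$, the matrix $\Omega:=\widehat\Omega^1+c\,\widehat\Omega^2$ is a non-zero PSD equilibrium stress matrix of $(G',p)$. Since $S$ is a clique containing $ij$, the $(i,j)$ entry of $\Omega$ equals $-\omega^1_{ij}-c\,\omega^2_{ij}$, which forces $c=-\omega^1_{ij}/\omega^2_{ij}$; and positive-semidefiniteness of $c\,\widehat\Omega^2$ forces $c>0$, i.e.\ $\omega^1_{ij}$ and $\omega^2_{ij}$ must have opposite signs.

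To arrange the signs I would use projective transformations. A projective transformation carries a framework $(H,q)$ to a framework $(H,q')$ and transforms a stress matrix $\Omega$ into $D\Omega D$, where $D$ is the diagonal matrix whose $v$-th entry is the (non-zero) value at $q(v)$ of the affine form that is sent to infinity; $D\Omega D$ is congruent to $\Omega$, hence PSD exactly when $\Omega$ is, and its $uv$ entry is $D_{uu}D_{vv}$ times that of $\Omega$. Choosing a \emph{rational} hyperplane that separates $p^1(i)$ from $p^1(j)$ and avoids every point $p^1(v)$ --- which exists since the $p^1(v)$ are distinct and generic --- one obtains a projective map defined over $\QQ$; the image framework is therefore still generic, and the sign of the $ij$ entry of its stress has been reversed. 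Applying such a transformation to $(G_1,p^1)$ and/or to $(G_2,p^2)$ as needed, we may assume $\omega^1_{ij}<0<\omega^2_{ij}$, and we set $c:=-\omega^1_{ij}/\omega^2_{ij}>0$.

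The main step is to produce the glued generic configuration. Fix $(G_1,p^1)$; this pins $S$ at the generic positions $p^1|_S$. Let $\mathcal U$ be the set of all configurations of $V(G_2)$ for which $G_2$, with that configuration, supports a PSD equilibrium stress whose $ij$ entry is positive. By Tarski--Seidenberg (applied to the equilibrium equations together with the positive-semidefiniteness and sign conditions), $\mathcal U$ is semialgebraic and defined over $\QQ$; since it contains the generic framework $p^2$ it has non-empty interior; and it is invariant under the diagonal action of the affine group, because equilibrium stress matrices are unaffected by affine maps. Pick a non-empty open $W\subseteq\mathcal U$ and a point $q_0\in W$ whose restriction to $S$ is affinely independent (a generic point of $W$ has this property), and an affine bijection $A$ of $\RR^d$ with $A(q_0(v))=p^1(v)$ for all $v\in S$, which exists because $|S|=k\le d+1$. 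Then $A\cdot W$ is an open subset of $\mathcal U$ containing the point $Aq_0$, which restricts to $p^1|_S$ on $S$; hence $\mathcal U$ contains a non-empty relatively open subset of the affine slice $\{q:q|_S=p^1|_S\}$. As this slice is coordinatised by the positions of the vertices in $V(G_2)\setminus S$, it follows that $\mathcal U$ contains a configuration $q^*$ whose coordinates on $V(G_2)\setminus S$ are algebraically independent over $\QQ(p^1)$. Then $p:=p^1\cup q^*$ is a generic configuration of $V(G)$; the framework $(G_1,p|_{V(G_1)})=(G_1,p^1)$ carries $\omega^1$ with $\omega^1_{ij}<0$, and $(G_2,p|_{V(G_2)})=(G_2,q^*)$ carries some PSD stress $\omega^2$ with $\omega^2_{ij}>0$.

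Finally, $\Omega=\widehat\Omega^1+c\,\widehat\Omega^2$ is a sum of PSD matrices, hence PSD; it has zero row sums, its off-diagonal support lies in $E(G)$, and the equilibrium conditions at every vertex of $G$ hold for the corresponding weights (on $V(G_1)$ because $p$ restricts to $p^1$ there; on $V(G_2)\setminus S$ because those weights come only from $\widehat\Omega^2$ and $p$ restricts to $q^*$), so $\Omega$ is an equilibrium stress matrix of $(G,p)$; by the choice of $c$ its $(i,j)$ entry is $0$, so it is an equilibrium stress matrix of $(G',p)$. It is non-zero, since if $\omega^1$ were supported only on the edges of $K_k$ it would be an equilibrium stress of the generic framework $(K_k,p^1|_S)$, which is $0$ because $K_k$ is $d$-independent; so some edge $uv\in E(G_1)$ with, say, $u\notin S$ has $\omega^1_{uv}\ne 0$, and then $\Omega_{uv}=\Omega^1_{uv}\ne 0$ because $\widehat\Omega^2$ vanishes on row $u$. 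Thus $(G',p)$ is a generic framework supporting a non-zero PSD equilibrium stress. The step I expect to be the real obstacle is the genericity of the glued configuration: a naive gluing (for instance, applying a single affine map to $(G_2,p^2)$) creates spurious algebraic relations among the coordinates, so one must genuinely re-realise $G_2$ within the semialgebraic family $\mathcal U$, and extracting from $\mathcal U$ a sufficiently generic configuration lying in the required affine slice is the delicate point; arranging the signs via projective transformations is the other place that needs care, since the hypothesis controls only that $\omega^k_{ij}\ne 0$.
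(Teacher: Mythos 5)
This lemma is stated here as an import from \cite{Betal} and the present paper contains no proof of it, so there is nothing internal to compare against; your argument is correct and is essentially the standard one (and the one used in \cite{Betal}): fix the signs so that $\omega^1_{ij}$ and $\omega^2_{ij}$ are opposite via a rational projective transformation that separates $p(i)$ from $p(j)$, re-realise $G_2$ inside the affine-invariant semialgebraic set of configurations carrying a suitably signed PSD stress so that the glued configuration agrees with $p^1$ on the shared clique and is generic over $\QQ$, and take the PSD sum $\widehat\Omega^1+c\,\widehat\Omega^2$ with $c>0$ chosen to annihilate the $ij$ entry. One small simplification: the non-vanishing of $\Omega$ is immediate from the fact that a sum of PSD matrices vanishes only if every summand does, and $\widehat\Omega^1\neq 0$ because $\omega^1\neq 0$, so the detour through $d$-independence of $K_k$ is not needed.
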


We will also use the following lemma which was implicit in \cite{Betal}.

\begin{lemma}\label{lem:cone}
Let $G'$ be the cone of $G$. Then $\mlt (G')=\mlt (G)+1$.    
\end{lemma}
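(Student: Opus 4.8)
The plan is to combine the stress reformulation of the MLT (Theorem~\ref{thm: main mlt stress}) with the classical coning operation of rigidity theory, refined so as to keep track of positive semi-definiteness. For a graph $H$ write $\delta(H)$ for the smallest dimension in which a generic framework of $H$ supports no non-zero PSD equilibrium stress, so that $\mlt(H)=\delta(H)+1$ by Theorem~\ref{thm: main mlt stress}. Since the cone $G'=v_0*G$ (that is, $G$ with an added vertex $v_0$ adjacent to every vertex of $G$) has an edge, $\delta(G')\ge 1$; so it is enough to prove the following dimension-shifted equivalence: for every $d\ge 1$, a generic $d$-dimensional framework of $G'$ supports a non-zero PSD equilibrium stress if and only if a generic $(d-1)$-dimensional framework of $G$ does. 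Indeed, negating and taking the smallest admissible dimension on each side then yields $\delta(G')=\delta(G)+1$, hence $\mlt(G')=\mlt(G)+1$.

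For the equivalence I would use the following coning construction. Given a configuration $p$ of $G$ in $\RR^{d-1}$ and non-zero scalars $(r_v)_{v\in V(G)}$, let $\hat p$ be the configuration of $G'$ in $\RR^d$ with $\hat p(v_0)=0$ and $\hat p(v)=r_v(p(v),1)$ for $v\in V(G)$. A direct computation, splitting each equilibrium equation at a vertex $v\in V(G)$ into its first $d-1$ coordinates and its last coordinate, shows that $\omega\mapsto\hat\omega$, defined by $\hat\omega_{uv}:=\omega_{uv}/(r_ur_v)$ on the edges of $G$ with the cone-edge weights $\hat\omega_{v_0v}$ then forced by the last-coordinate equations, is a linear isomorphism from the space of equilibrium stresses of $(G,p)$ onto that of $(G',\hat p)$; the only delicate point is that the equilibrium equations at $v_0$ are automatically satisfied, which follows from the equilibrium and zero-row-sum relations for $\omega$. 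Moreover, writing $\Omega,\widehat\Omega$ for the associated stress matrices and $D=\diag(r_v)$, the computation yields $\Omega=D\,M\,D$ with $M:=\widehat\Omega[V(G),V(G)]$, while the zero-row-sum property of $\widehat\Omega$ forces
\[
\widehat\Omega=\begin{pmatrix}\mathbf{1}^{\top}M\mathbf{1} & -\mathbf{1}^{\top}M\\ -M\mathbf{1} & M\end{pmatrix},
\]
so that $(t,x)^{\top}\widehat\Omega\,(t,x)=(x-t\mathbf{1})^{\top}M(x-t\mathbf{1})$ for all $t\in\RR$, $x\in\RR^{V(G)}$. Since $(t,x)\mapsto x-t\mathbf{1}$ is surjective and $M$ is congruent to $\Omega$, we get $\widehat\Omega\succeq 0\iff\Omega\succeq 0$, so the isomorphism restricts to a bijection between the non-zero PSD equilibrium stresses of $(G,p)$ and those of $(G',\hat p)$.

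Finally I would transfer genericity. Allowing an arbitrary translation $t\in\RR^d$ (so $\hat p(v_0)=t$), the assignment $(p,r,t)\mapsto\hat p$ is a rational map, defined over $\QQ$, between two spaces of the same dimension, and it is birational: from a configuration $\hat p$ of $G'$ with $\hat p(v)-\hat p(v_0)$ having non-zero last coordinate for all $v$, one recovers $t$, then each $r_v$ and $p(v)$. A dominant $\QQ$-rational map sends generic points to generic points, so a generic configuration of $G'$ in $\RR^d$ is, up to translation, of the form $\hat p$ for a generic configuration $p$ of $G$ in $\RR^{d-1}$, and conversely a generic $p$ can be coned to a generic $\hat p$. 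Since supporting a non-zero PSD equilibrium stress is a generic property \cite{Betal} and the stress correspondence above is unaffected by translation, the dimension-shifted equivalence follows, completing the proof. The main obstacle will be carrying out the stress computation cleanly — especially checking that equilibrium at $v_0$ comes for free, and isolating the congruence $\Omega=D\,M\,D$ together with the bordered form of $\widehat\Omega$ that makes the PSD statement transparent; everything else is formal. (One could instead cite Whiteley's coning theorem for the stress-space isomorphism and supply only the positive-semidefiniteness refinement.)
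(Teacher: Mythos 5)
Your proof is correct and follows essentially the same route as the paper: combine the stress characterization of the MLT (Theorem~\ref{thm: main mlt stress}) with the fact that coning raises the dimension by one while preserving the existence of a non-zero PSD equilibrium stress. The paper obtains the coning step by citing \cite[Lemma 4.9]{cgt2}, whereas you prove it directly via the stress-space isomorphism $\omega_{uv}=r_ur_v\hat\omega_{uv}$, the congruence $\Omega=D\,\widehat\Omega[V(G),V(G)]\,D$, and the bordered-matrix identity, all of which (together with your birational genericity transfer) check out.
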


\begin{proof}
This is an immediate corollary of \cite[Lemma 4.9]{cgt2}
and Theorem \ref{thm: main mlt stress}.
\end{proof}

Note that iteratively coning $K_{5,5}$ (applying the lemma) gives us a family of non-bipartite graphs $G$ for which $\mlt(G)\neq \gcr(G)$. We also note, but will not need, the fact that adding a single edge to a graph can alter the MLT by at most 1.

\begin{prop}\label{prop:edgeaddition}
    Let $e$ be a non-edge of a graph $G=(V,E)$.
    Then $\mlt (G) \leq \mlt (G+e) \leq \mlt (G) +1$.
\end{prop}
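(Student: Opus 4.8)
The lower bound $\mlt(G)\le\mlt(G+e)$ is immediate from Lemma~\ref{lem: mlt monotone}, since $G$ is a subgraph of $G+e$, so the plan concerns only the upper bound $\mlt(G+e)\le\mlt(G)+1$. The idea is to bypass the stress machinery entirely and instead realise $G+e$ as a subgraph of a graph whose MLT is already under control, namely a cone.

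Write $e=uv$ and let $G-v$ denote the induced subgraph of $G$ on $V\setminus\{v\}$. The plan hinges on the observation that $G+uv$ is isomorphic to a subgraph of the cone of $G-v$. To see this, identify the apex of that cone with the vertex $v$: the cone then has vertex set $V$, it contains every edge of $G$ avoiding $v$ (these are precisely the edges of $G-v$), and, the apex being adjacent to all remaining vertices, it contains $vw$ for every $w\in V\setminus\{v\}$ — in particular every edge of $G$ incident to $v$, and also the edge $uv$ we wish to add. Hence $E(G+uv)$ is contained in the edge set of the cone of $G-v$.

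With this in hand, the bound is obtained by chaining results already established: Lemma~\ref{lem: mlt monotone} gives that $\mlt(G+uv)$ is at most the MLT of the cone of $G-v$; Lemma~\ref{lem:cone} evaluates the latter as $\mlt(G-v)+1$; and Lemma~\ref{lem: mlt monotone} applied once more gives $\mlt(G-v)\le\mlt(G)$, because $G-v$ is an induced subgraph of $G$. Combining these yields $\mlt(G+e)\le\mlt(G-v)+1\le\mlt(G)+1$, as wanted.

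I do not anticipate a genuine difficulty here: the one non-mechanical step is spotting the embedding of $G+e$ into the cone of $G-v$, and after that the argument is just monotonicity of the MLT under subgraphs together with the coning identity of Lemma~\ref{lem:cone}. For comparison, a stress-theoretic route via Theorem~\ref{thm: main mlt stress} would attempt to project a nonzero PSD equilibrium stress on a generic framework of $G+e$ down by one dimension to one on $G$; the subcase in which that stress does not vanish on the edge $uv$ makes this approach delicate, so the coning argument is the one I would use.
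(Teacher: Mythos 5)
Your proof is correct and is essentially identical to the paper's: the paper adds the set $F$ of all non-edges at $v$ to $G$, which is exactly your cone of $G-v$ with apex $v$, and then chains Lemma~\ref{lem: mlt monotone} and Lemma~\ref{lem:cone} in the same way. No differences worth noting.
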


\begin{proof}
    Let $e = vw$ and define the set of non-edges $F := \{ vu : u \in V, ~ v \not\sim u \}$.
    By Lemma \ref{lem:cone},
    $\mlt(G+F) = \mlt (G - v) +1$.
    By Lemma \ref{lem: mlt monotone} we see that
    \begin{align*}
        \mlt(G) \leq \mlt(G+e) \leq \mlt(G + F) = \mlt(G-v) +1 \leq \mlt(G) + 1,
    \end{align*}
    which concludes the proof.
\end{proof}

\section{Graphs with few vertices}\label{sec: many / few}

We will demonstrate that if $G$ has sufficiently few vertices then $\mlt(G) = \gcr(G)$. This requires a number of technical results which we now derive. The first is computational.

\begin{lemma}\label{lemma: 6 regular}
    Every $6$-regular graph on $9$ vertices is globally $4$-rigid.
\end{lemma}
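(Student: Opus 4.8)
The plan is to combine a finiteness argument — there are only finitely many 6-regular graphs on 9 vertices — with a sufficient combinatorial condition for global $d$-rigidity, so that the statement reduces to a finite check. A 6-regular graph on $9$ vertices is the complement of a 2-regular graph on $9$ vertices, i.e.\ the complement of a disjoint union of cycles covering all $9$ vertices. The integer partitions of $9$ into parts of size at least $3$ are $9$, $6+3$, $5+4$, $4+5$, $3+3+3$, $3+6$; after discarding order this gives the partitions $\{9\}$, $\{3,6\}$, $\{4,5\}$, $\{3,3,3\}$. So there are exactly four graphs to consider, each on $9$ vertices with $\binom{9}{2}-9 = 27$ edges.

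For each of these four graphs $G$, I would verify global $4$-rigidity using the standard sufficient conditions from combinatorial rigidity. Recall (Hendrickson/Connelly/Jackson--Jord\'an, and the $d$-dimensional inductive toolkit) that a graph is globally $d$-rigid if it is ``$(d+1)$-connected and redundantly $d$-rigid'' and — in the cases we need — this can be certified by building $G$ up from a globally $4$-rigid ``seed'' (e.g.\ $K_6$, which is globally $4$-rigid) using $0$- and $1$-extensions, which preserve $d$-rigidity (Lemma~\ref{lem:0ext}) and, by the result of \cite{Bob05} cited in the text, preserve global $d$-rigidity. Alternatively, one can certify $4$-rigidity numerically by exhibiting a single rational (hence generic-enough after a rank check) configuration whose rigidity matrix has rank $4\cdot 9 - 10 = 26$, and then verify generic global rigidity by exhibiting an equilibrium stress matrix of the maximal rank $n-d-1 = 4$ at a generic (or suitably random rational) configuration, invoking the sufficient condition of \cite{gortler2010characterizing}: a generic framework with a stress matrix of rank $n-d-1$ whose underlying graph is $(d+1)$-connected is globally $d$-rigid. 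Each of the four graphs has minimum degree $6 \ge 5 = d+1$ and is easily seen to be $5$-connected (the complement of a union of cycles on $9$ vertices has no small separators), so the connectivity hypothesis is immediate; the substantive content is the rank-$4$ stress, which is a finite linear-algebra computation.

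The main obstacle is purely that this is a computation rather than a clean structural argument: one must actually exhibit, for each of the four graphs, either an inductive construction from $K_6$ via $1$-extensions or a certifying stress matrix of rank $4$. I expect the cleanest writeup to note that the complements of $\{9\}$, $\{3,6\}$, $\{4,5\}$, $\{3,3,3\}$ can each be obtained from a globally $4$-rigid graph by a sequence of $1$-extensions that keep us $4$-rigid and $5$-connected throughout, so that \cite{Bob05} applies at every step; the bookkeeping of which edge to remove and which vertices to attach to at each $1$-extension is the only fiddly part, and it is finite. If an inductive route proves awkward for one of the four graphs, the fallback is a direct machine-checked rank computation of a random-rational stress matrix, which is guaranteed to succeed by Theorem~\ref{thm: gen ur} once global $4$-rigidity is known — but since here we are trying to prove global $4$-rigidity, the stress-rank computation together with the $\cite{gortler2010characterizing}$ sufficient condition is the self-contained certificate to record.
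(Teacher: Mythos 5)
Your proposal takes essentially the same approach as the paper: the paper enumerates the same four complements of $2$-regular graphs on $9$ vertices and certifies global $4$-rigidity of each via the sufficient condition of \cite{Bob05} (an infinitesimally rigid realization with an equilibrium stress matrix of rank $n-d-1=4$), which is exactly your second route. The only substantive difference is that the paper actually supplies the explicit integer configuration serving as the certificate for all four graphs simultaneously, whereas your writeup defers that computation; the inductive $1$-extension construction you float as an alternative is not used.
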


\begin{proof}
    The complement of a $6$-regular graph on $9$ vertices is $2$-regular
    and there are exactly four isomorphism classes of $2$-regular graphs on $9$-vertices.
    In particular, these are the $9$-cycle, the disjoint union of a $6$-cycle and a $3$-cycle, the disjoint union of a $5$-cycle and a $4$-cycle, and the disjoint union of three $3$-cycles.
    
    In all four graphs let the vertex set be $\{v_1,v_2,\dots,v_9\}$. We define $G_1$ by taking the edge set of $K_9$ and deleting the 9-cycle with edges $v_1v_2,v_2v_3,\dots,v_8v_9,v_9v_1$. We define $G_2$ by taking the edge set of $K_9$ and deleting the 6-cycle with edges $v_1v_2,v_2v_3,\dots,v_5v_6,v_6v_1$ and the 3-cycle with edges $v_7v_8,v_8v_9,v_9v_7$. We define $G_3$ by taking the edge set of $K_9$ and deleting the 5-cycle with edges $v_1v_2,v_2v_3,\dots,v_4v_5,v_5v_1$ and the 4-cycle with edges $v_6v_7,v_7v_8,v_8v_9,v_9v_6$. Finally, we define $G_4$ by taking the edge set of $K_9$ and deleting the 3-cycle with edges $v_1v_2,v_2v_3,v_3v_1$, the 3-cycle with edges $v_4v_5,v_5v_6,v_6v_4$ and the 3-cycle with edges $v_7v_8,v_8v_9,v_9v_7$. 

    By~\cite[Theorem~1.3]{Bob05}, it suffices to show that each $G_i$ has an infinitesimally rigid realization with an equilibrium stress of rank 4. For $1\leq i \leq 4$ we define the framework $(G_i,p)$ in $\mathbb{R}^4$ by putting $p(v_1)=(0,0,0,0)$, $p(v_2)=(0,0,0,1)$, $p(v_3)=(0,0,4,-1)$, $p(v_4)=(0,2,3,5)$, $p(v_5)=(1,-1,0,-2)$, $p(v_6)=(1,3,7,0)$, $p(v_7)=(2,-4,-1,1)$, $p(v_8)=(-9,0,2,11)$ and $p(v_9)=(-3,3,1,6)$.
    Given these realizations it is simple for the reader to verify that the rigidity matrix has rank $4n-10=26$, that the cokernel of the rigidity matrix is 1-dimensional and that the stress matrix corresponding to any non-zero equilibrium stress of $(G_i,p)$ has rank 4.
\end{proof}

We will also make use of the following theorem of Jord{\'a}n. Given a graph $G=(V,E)$ and a subset $X$ of vertices,
$i_G(X)$ (or $i(X)$ when the graph $G$ is clear) denotes the number of edges in the subgraph induced by~$X$.
$G$ is \emph{$(d,\binom{d+1}{2})$-sparse} if $i(X)\leq d|X|-\binom{d+1}{2}$ for all $X\subset V$ with $|X|\geq d$. $G$ is \emph{$(d,\binom{d+1}{2})$-tight} if it is $(d,\binom{d+1}{2})$-sparse and $|E|=d|V|-\binom{d+1}{2}$.
A graph is \emph{redundantly $d$-rigid} if it is $d$-rigid, and remains so after removing any edge. 

\begin{thm}[{\cite[Theorems 2.3 and 3.2]{egres-20-01}}]\label{thm:tibor}
   Let $d \ge 1$, let $k \in \{3,4\}$ and let $G$ be a graph on $d+k$ vertices. 
   Then $G$ is $d$-rigid if and only if it contains a spanning $(d,\binom{d+1}{2})$-tight subgraph. Moreover
   $G$ is globally $d$-rigid if and only if $G$ is redundantly $d$-rigid and $(d+1)$-connected.
\end{thm}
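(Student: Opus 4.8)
I would split the statement into its two halves; in each, one direction is a general rigidity fact and the other is where the hypothesis $k\in\{3,4\}$ earns its keep. \emph{The rigidity characterisation.} If $G$ on $n=d+k$ vertices is $d$-rigid then a generic rigidity matrix $R(G,p)$ has rank $dn-\binom{d+1}{2}$, and picking a maximal linearly independent set of rows yields a spanning $d$-independent subgraph $H$ with exactly $dn-\binom{d+1}{2}$ edges (spanning because an isolated vertex would drop the rank); since $d$-independent graphs satisfy the Maxwell count, $H$ is $(d,\binom{d+1}{2})$-tight. For the converse it suffices to show that a $(d,\binom{d+1}{2})$-tight graph $H$ on $d+k$ vertices with $k\le 4$ is $d$-independent, because its $dn-\binom{d+1}{2}$ edges then form a basis of $\mathcal{R}_d(K_n)$ and so $G\supseteq H$ is $d$-rigid. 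I would prove this by induction on the number of vertices. When $n\le d+2$ there is nothing to prove: $\mathcal{R}_d(K_{d+1})$ is free and $\mathcal{R}_d(K_{d+2})$ is the uniform matroid $U_{\binom{d+2}{2}-1,\binom{d+2}{2}}$ since $K_{d+2}$ is a $d$-circuit, so $(d,\binom{d+1}{2})$-sparsity is independence. For $n=d+3$ or $d+4$, a tight graph has minimum degree at least $d$; if some vertex has degree $d$, apply a $0$-reduction, use induction, and restore $d$-independence by Lemma~\ref{lem:0ext}; if the minimum degree is $d+1$, then since $\{v\}\cup N(v)$ spans at most $\binom{d+2}{2}-1$ edges two neighbours of $v$ are non-adjacent, and a standard submodularity argument on the sparsity count shows some $1$-reduction at $v$ keeps the graph tight, after which induction and Lemma~\ref{lem:0ext} again apply.

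\emph{The remaining case — and why $k\le 4$.} It remains to handle minimum degree at least $d+2$. The inequality $2(dn-\binom{d+1}{2})\ge (d+2)n$ forces $k=4$ and $d\ge 8$, and then all degrees lie in $\{n-2,n-1\}$ with exactly $d-8$ vertices of degree $n-1$. If some vertex $v$ has degree $n-1$ then $G=(G-v)\ast v$ is a cone, $G-v$ is $(d-1,\binom{d}{2})$-tight on $n-1$ vertices hence $(d-1)$-rigid by induction, and Whiteley's coning theorem makes $G$ $d$-rigid; otherwise $d=8$ and $G$ is the unique $10$-regular graph on $12$ vertices, the cocktail-party graph $K_{12}$ minus a perfect matching, which I would show is $8$-rigid by exhibiting a sufficiently generic realization, exactly as in the proof of Lemma~\ref{lemma: 6 regular}. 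This sporadic graph, and more fundamentally the fact that the induction closes \emph{only} for $k\le4$, is the main obstacle: at $k=5$ the double banana (two copies of $K_5$ minus an edge sharing two non-adjacent vertices), and for general $d$ two copies of $K_{d+2}$ minus an edge glued along $d-1$ vertices, are $(d,\binom{d+1}{2})$-tight, admit no valid $0$- or $1$-reduction, and are flexible, so any correct argument must break down past $k=4$.

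\emph{The global rigidity characterisation.} The ``only if'' direction is Hendrickson's theorem: a globally $d$-rigid graph on at least $d+2$ vertices is $(d+1)$-connected and redundantly $d$-rigid, with no restriction on $n$. For the converse I would again induct on the number of vertices, the base case being $n=d+2$, where the only $(d+1)$-connected graph is $K_{d+2}$, which is globally $d$-rigid. Given $G$ redundantly $d$-rigid and $(d+1)$-connected on $n=d+k\le d+4$ vertices with $G\neq K_n$, I would find an inverse $1$-extension producing $G'$ on $n-1$ vertices that is still redundantly $d$-rigid and $(d+1)$-connected — the new edge created by the $1$-reduction is precisely what can be spent to preserve $(d+1)$-connectivity — so that $G'$ is globally $d$-rigid by induction and hence, since $1$-extensions preserve generic global $d$-rigidity \cite{Bob05}, so is $G$. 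The dense graphs admitting no such reduction are handled directly: those with a vertex of degree $n-1$ are cones and peel off via the fact that coning preserves generic global rigidity (Connelly–Whiteley), reducing to the sporadic cocktail-party graphs $K_8$ minus a perfect matching ($d=4$) and $K_{10}$ minus a perfect matching ($d=6$), which must be verified globally $d$-rigid by an explicit stress-matrix computation. The hard point, as before, is producing a reduction that simultaneously maintains redundant $d$-rigidity and $(d+1)$-connectivity, and the argument is again confined to $k\le4$: $K_{5,5}$, on $4+6$ vertices, is $5$-connected and redundantly $4$-rigid but not globally $4$-rigid, so the statement genuinely fails for larger $k$.
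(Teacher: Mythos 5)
This statement is imported from \cite{egres-20-01} (Theorems 2.3 and 3.2); the paper gives no proof of it, so there is no internal argument to compare yours against. Measured against Jord\'an's actual proof, your architecture is the right one: characterise $d$-rigidity on $\le d+4$ vertices by showing every $(d,\binom{d+1}{2})$-tight graph there admits a reduction sequence of $0$- and $1$-reductions down to $K_{d+1}$, peel off dominating vertices by coning, verify the handful of sporadic dense graphs directly, and for global rigidity combine Hendrickson's necessary conditions with an induction using the fact that $1$-extensions preserve generic global rigidity \cite{Bob05}. Your accounting in the high-minimum-degree case ($k=4$, $d\ge 8$, exactly $d-8$ dominating vertices, cocktail-party graph at $d=8$) is correct, and you correctly locate why the theorem stops at $k=4$: the generalized double banana on $d+5$ vertices is tight, reduction-free and flexible, while $K_{5,5}$ kills the global statement for larger $k$.

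The genuine gap is at the step you dispatch as ``a standard submodularity argument on the sparsity count shows some $1$-reduction at $v$ keeps the graph tight.'' That claim is false for general $(d,\binom{d+1}{2})$-tight graphs when $d\ge 3$ --- the double banana has minimum degree $d+1$ and admits no sparsity-preserving $1$-reduction --- so it is exactly here that the hypothesis $n\le d+4$ must be spent, by analysing the tight (``critical'') sets that block each candidate edge $xy$ with $x,y\in N(v)$ and showing they cannot coexist inside only $d+3$ remaining vertices. This blocker analysis is essentially the entire content of \cite[Theorem~2.3]{egres-20-01}, and the analogous admissibility statement in your global-rigidity induction (a $1$-reduction preserving both redundant $d$-rigidity and $(d+1)$-connectivity) is likewise asserted rather than proved; your list of sporadic graphs there ($K_8$ and $K_{10}$ minus perfect matchings) is a guess that would need to fall out of that analysis rather than be posited in advance. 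As a blueprint the proposal is sound and honest about where the difficulty lives, but as it stands the two inductive steps that carry all the weight are named rather than carried out.
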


The following lemma was implicit in \cite{egres-20-01}. Let $H_d$ denote the graph obtained by gluing two copies of $K_{d+2}$ along a common $K_d$ subgraph and removing a common edge -- see Figure \ref{fig:h3} for an illustration when $d=3$.

\begin{figure}[h]
	\begin{center}
		\begin{tikzpicture}
			\node[vertex] (L1) at (-1,-0.5) {};
			\node[vertex] (L2) at (-1,0.5) {};
			\node[vertex] (C1) at (0,-1) {};
			\node[vertex] (C2) at (0,0) {};
			\node[vertex] (C3) at (0,1) {};
			\node[vertex] (R1) at (1,-0.5) {};
			\node[vertex] (R2) at (1,0.5) {};
	
			\draw[edge] (L1)edge(C1);
			\draw[edge] (L1)edge(C2);
			\draw[edge] (L1)edge(C3);
			
			\draw[edge] (L2)edge(C1);
			\draw[edge] (L2)edge(C2);
			\draw[edge] (L2)edge(C3);
			
			\draw[edge] (L1)edge(L2);
			
			\draw[edge] (R1)edge(C1);
			\draw[edge] (R1)edge(C2);
			\draw[edge] (R1)edge(C3);
			
			\draw[edge] (R2)edge(C1);
			\draw[edge] (R2)edge(C2);
			\draw[edge] (R2)edge(C3);
			
			\draw[edge] (R1)edge(R2);
			
			\draw[edge] (C1)edge(C2);
			\draw[edge] (C2)edge(C3);
		\end{tikzpicture}
	\end{center}
	\caption{The graph $H_3$.}
	\label{fig:h3}
\end{figure}
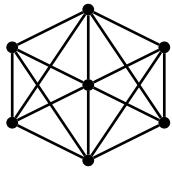

\begin{lemma}[\cite{egres-20-01}]\label{lemma: H_n}
    $H_d$ is the unique graph on at most $d+4$ vertices that is a $d$-rigid $d$-circuit and not $(d+1)$-connected.
\end{lemma}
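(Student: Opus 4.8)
\noindent\emph{Overview and a degree bound.} (We assume $d\ge 2$; for $d=1$ the statement degenerates.) The plan is to pin down the connectivity of a $d$-circuit via a minimum-degree bound and then finish with an edge count. I would first show that every vertex of a $d$-circuit $C$ has degree at least $d+1$. Suppose $\deg_C(v)\le d$. Since $C$ is a $d$-circuit, $C-v$ (a proper subgraph) is $d$-independent, and since a $d$-circuit has at least $d+2$ vertices ($K_{d+1}$ being $d$-independent), $C-v$ has at least $d+1\ge d$ vertices; thus $C$ is a subgraph of a $d$-dimensional $0$-extension of $C-v$ (adjoin $v$ with its $\deg_C(v)$ edges together with $d-\deg_C(v)$ further edges to $V(C-v)$), which is $d$-independent by Lemma~\ref{lem:0ext} --- contradicting that $C$ is not $d$-independent. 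Hence if $S$ is a vertex cut of $C$ and we split the components of $C-S$ into nonempty parts $A$, $B$ (so there are no $A$--$B$ edges), each vertex of $A$ has all neighbours in $A\cup S$, giving $|A|+|S|\ge d+2$, and similarly $|B|+|S|\ge d+2$; adding these, $|S|\ge 2d+4-|V(C)|$.

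\noindent\emph{Ruling out $|V(C)|\le d+3$.} If $|V(C)|\le d+2$ then $|V(C)|=d+2$, and minimum degree $\ge d+1$ forces $C=K_{d+2}$, which is $(d+1)$-connected. If $|V(C)|=d+3$ the displayed bound gives that every vertex cut has size $\ge d+1$, so $C$ is $(d+1)$-connected. So no $d$-rigid $d$-circuit on at most $d+3$ vertices fails to be $(d+1)$-connected, and a witness to the lemma must have exactly $d+4$ vertices.

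\noindent\emph{The case $|V(C)|=d+4$.} Here the bound forces any vertex cut $S$ to have size exactly $d$, and then the two sides $A=\{a_1,a_2\}$ and $B=\{b_1,b_2\}$ of $C-S$ each have exactly two vertices. Minimum degree $\ge d+1$ now forces each of $a_1,a_2$ (resp.\ $b_1,b_2$) to be adjacent to the other vertex of its part and to all of $S$, while there are no $A$--$B$ edges; thus $E(C)$ is exactly these $4d+2$ forced edges together with $E(C[S])$. A $d$-rigid $d$-circuit on $d+4$ vertices has rank $d(d+4)-\binom{d+1}{2}$, hence $d(d+4)-\binom{d+1}{2}+1$ edges, so $|E(C[S])|=\binom{d}{2}-1$ and $C[S]=K_d-f$ for a single edge $f$; that is, $C\cong H_d$.

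\noindent\emph{Verifying $H_d$ --- the main obstacle.} Finally I would check that $H_d$ itself is a non-$(d+1)$-connected $d$-rigid $d$-circuit on $d+4$ vertices. Its central $d$-set is a cut, so $H_d$ is not $(d+1)$-connected; and $H_d$ has $d(d+4)-\binom{d+1}{2}+1$ edges, one more than the rank of $K_{d+4}$ in the $d$-dimensional rigidity matroid, so $H_d$ is not $d$-independent. The key claim is that the only subset $X$ of $V(H_d)$ with $i_{H_d}(X)>d|X|-\binom{d+1}{2}$ is $X=V(H_d)$ itself; granting it, for every edge $e$ of $H_d$ the graph $H_d-e$ is $(d,\binom{d+1}{2})$-sparse with exactly $d(d+4)-\binom{d+1}{2}$ edges, hence $(d,\binom{d+1}{2})$-tight, hence $d$-rigid by Theorem~\ref{thm:tibor} and therefore $d$-independent. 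This shows every proper subgraph of $H_d$ is $d$-independent, so $H_d$ is a $d$-circuit; and $H_d$ is $d$-rigid since it contains the spanning $(d,\binom{d+1}{2})$-tight subgraph $H_d-e$. The step I expect to take the most care is this last claim --- a short but finicky case analysis over subsets of $V(H_d)$ classified by how many of $a_1,a_2,b_1,b_2$ they meet --- along with being careful throughout that the $0$-extension arguments are invoked only when there are enough vertices to attach a new degree-$d$ vertex.
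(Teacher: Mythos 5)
Your argument is correct and complete. Note that the paper itself does not prove this lemma --- it is stated with the remark that it is ``implicit in \cite{egres-20-01}'' --- so there is no in-paper proof to compare against; what you have written is a self-contained justification of the citation. Your route (minimum degree $\ge d+1$ in any $d$-circuit via Lemma~\ref{lem:0ext}, hence every vertex cut $S$ satisfies $|S|\ge 2d+4-|V|$, hence a non-$(d+1)$-connected witness has exactly $d+4$ vertices and a cut of size exactly $d$ with two sides of size $2$ whose incidences are forced, after which the edge count $d(d+4)-\binom{d+1}{2}+1$ of a $d$-rigid $d$-circuit pins down $C[S]\cong K_d-f$) is the natural one and matches how the paper elsewhere extracts degree and connectivity information from circuits. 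The step you flag as the main obstacle does go through cleanly: for $|X|\le d+1$ the count $i(X)\le\binom{|X|}{2}\le d|X|-\binom{d+1}{2}$ is automatic; for $|X|=d+2$ one only needs that $H_d$ contains no $K_{d+2}$ (each side is $K_{d+2}-f$ and any $X$ meeting both $A$ and $B$ contains an $A$--$B$ non-edge); for $|X|=d+3$ the omitted vertex leaves at least $3$ non-edges inside $X$; so $V$ is the unique violating set, $H_d-e$ is $(d,\binom{d+1}{2})$-tight for every edge $e$, and Theorem~\ref{thm:tibor} gives both the rigidity and the circuit property. One could shorten the verification that $H_d$ is a circuit by instead exhibiting it as a deleted $d$-sum of two copies of the circuit $K_{d+2}$ (circuit elimination along the common edge), but the sparsity count you propose is equally valid and stays within the tools quoted in the paper.
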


For disjoint vertex sets $A,B$ of a graph $G$,
we will denote the induced subgraph on the vertex set $A$ by $G[A]$,
the non-edges of $G$ by $E^c$,
the non-edges of $G$ induced on the set $A$ by $E^c[A]$,
and the set of non-edges of $G$ with one end in $A$ and the other in $B$ by $E^c(A,B)$.
The minimum degree of a graph $G$ will be denoted $\delta(G)$.
The set of neighbors of a vertex $v$ of $G$ will be denoted $N_G(v)$.
Given a set $V$ of vertices, $K(V)$ denotes the complete graph on vertex set $V$.
Given a subset $S$ of edges or vertices of $G$, we let $G-S$ denote the graph obtained by removing $S$.
Given graphs $G$ and $H$, $G+H$ denotes the graph whose vertex and edge sets are the unions of the vertex and edge sets of $G$ and $H$.

\begin{lemma}\label{lem:d+5circcut}
    Suppose $G$ is a $d$-rigid $d$-circuit with $d+5$ vertices and minimum degree $d+1$. Then $G$ has a $d$-dimensional generic realization with a PSD equilibrium stress of rank at least~$3$.
\end{lemma}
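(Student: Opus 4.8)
The plan is to reduce the general case to the structural classification already available, via the $1$-extension machinery of Lemma~\ref{lem:1ext} together with the clique-sum tool of Lemma~\ref{lem: psd clique sum}. Let $G$ be a $d$-rigid $d$-circuit on $n = d+5$ vertices with $\delta(G)\ge d+1$. Since $G$ is a $d$-circuit it is $(d,\binom{d+1}{2})$-tight, so $|E(G)| = d(d+5)-\binom{d+1}{2}$; counting degrees, the average degree is $2|E(G)|/(d+5) = 2d - \tfrac{(d+1)d + \cdots}{d+5}$, which for large $d$ is only slightly below $2d$, and in any case is close to $d+2$ only when $d$ is small. The key point is that with $\delta(G)\ge d+1$ and $n=d+5$, there must exist a vertex $v$ of degree exactly $d+1$ or $d+2$ (if every vertex had degree $\ge d+3$ the edge count would be too large for $(d,\binom{d+1}{2})$-sparsity on $X = V(G)$), and I would like to perform a $1$-reduction at such a low-degree vertex to land on a $d$-circuit on $d+4$ vertices.

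First I would carry out the $1$-reduction: pick $v$ with $\deg(v) = d+1$ (the $d+2$ case is handled by first deleting one incident edge, which keeps the graph $d$-rigid since a $d$-circuit is redundantly $d$-rigid away from... — more carefully, one shows some incident edge lies in a $d$-circuit containing the rest, so removal preserves $d$-rigidity). Since $v$ has degree $d+1$, there are $d+1$ ways to split its neighbourhood into a ``kept'' edge $xy$ plus $d-1$ cone edges; a standard result in combinatorial rigidity (the non-existence of an obstruction to $1$-reduction at a degree-$(d+1)$ vertex in a $d$-rigid graph) guarantees at least one choice yields a $d$-rigid graph $G'$ on $d+4$ vertices. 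Because $G$ was a $d$-circuit and the $1$-reduction removes exactly the right number of edges, $G'$ is again a $d$-circuit — or, if it happens not to be, it contains a spanning $d$-circuit subgraph by Theorem~\ref{thm:tibor}, which is enough. Now I invoke the classification: by Lemma~\ref{lemma: H_n}, either $G'$ is $(d+1)$-connected, hence globally $d$-rigid by Theorem~\ref{thm:tibor} (as it is a $d$-circuit on $\le d+4$ vertices, hence redundantly $d$-rigid), or $G' \cong H_d$. In the first case Theorem~\ref{thm: gen ur} (or Corollary~\ref{cor:gr+cir}) gives a generic framework $(G',p)$ with a PSD equilibrium stress of rank $(d+4)-d-1 = 3$, exactly the hypothesis needed to apply Lemma~\ref{lem:1ext}: reversing the $1$-reduction, $G$ is a $1$-extension of $G'$, it is a $d$-rigid $d$-circuit, so Lemma~\ref{lem:1ext} produces a generic $(G,p')$ with a PSD equilibrium stress of rank at least $(d+5)-d-2 = 3$, which is the conclusion.

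It remains to dispose of the case $G' \cong H_d$. Here $H_d$ is the $d$-sum of two copies of $K_{d+2}$ along $K_d$ with a common edge removed; write $H_d = (K_{d+2}^{(1)} \cup K_{d+2}^{(2)}) - ij$ where $ij$ lies in the shared $K_d$. Each $K_{d+2}$ is globally $d$-rigid on $d+2 \ge d+2$ vertices, so by Theorem~\ref{thm: gen ur} each has a generic realization with a PSD equilibrium stress of rank $(d+2)-d-1 = 1$; since $K_{d+2}$ is a $d$-circuit, its stress is unique up to scaling and has no zero entries, so the coefficient on $ij$ is nonzero in each. Lemma~\ref{lem: psd clique sum} (with $k=d$) then yields a generic framework on $H_d$ supporting a nonzero PSD equilibrium stress. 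But I need rank $\ge 3$, not merely nonzero — so for this branch I would instead argue more carefully: reconstruct $G$ as a $1$-extension of $H_d$ and track stress rank through Lemma~\ref{lem:1ext}, which requires an input stress on $H_d$ of rank $(d+4)-d-2 = 2$. Producing a rank-$2$ PSD stress on $H_d$ should follow by gluing the two rank-$1$ stresses from the $K_{d+2}$'s along the $K_d$ in a way that makes their supports ``independent'' outside the shared edge (the two stress matrices live on disjoint sets of non-shared vertices), giving additivity of rank after the edge cancellation; this is a direct block-matrix computation.

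The main obstacle I expect is the $H_d$ branch: specifically, guaranteeing the stress rank is large enough after the clique-sum. Lemma~\ref{lem: psd clique sum} as stated only delivers a nonzero PSD stress, so either I need a quantitative refinement of it tracking rank, or I need to verify directly that $G$ (a $d$-circuit on $d+5$ vertices that $1$-reduces to $H_d$) is in fact $(d+1)$-connected — in which case $G$ itself is globally $d$-rigid and Corollary~\ref{cor:gr+cir}/Theorem~\ref{thm: gen ur} finishes immediately with a rank-$3$ stress, bypassing Lemma~\ref{lem:1ext} entirely. I suspect the latter is true: a $1$-extension of $H_d$ that remains a $d$-circuit typically restores $(d+1)$-connectivity, since the degree-$(d+1)$ vertex being added is joined to $d+1$ vertices and the only $d$-cut of $H_d$ separates the two ``ears''; adding a new vertex adjacent to both sides destroys that cut. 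Checking that no \emph{other} $d$-cut is created — and handling the degree-$(d+2)$ subcase uniformly — is the delicate part, but it is a finite, local connectivity argument on a completely explicit graph.
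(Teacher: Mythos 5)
Your overall strategy --- $1$-reduce at a degree-$(d+1)$ vertex to a graph $G'$ on $d+4$ vertices, classify $G'$ via Lemma~\ref{lemma: H_n} as either a $(d+1)$-connected (hence globally $d$-rigid) circuit or $H_d$, and lift a PSD stress back through Lemma~\ref{lem:1ext} --- is close to what the paper actually does in its non-$(d+1)$-connected case, and your fallback for the $H_d$ branch (a rank-$2$ PSD stress on $H_d$ obtained by summing the two $K_{d+2}$ stresses after cancelling on the deleted edge) is exactly the device the paper uses. But there are two genuine gaps. First, $G' = G - v + xy$ need not be a $d$-circuit: since $G$ is a rigid circuit and $\deg v = d+1$, the graph $G-v$ is isostatic, so $G'$ is rigid of corank $1$ and contains a \emph{unique} circuit through $xy$ --- but that circuit need not be spanning. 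Adding $xy$ can, for instance, complete a $K_{d+2}$ (a set $S$ with $G[S]=K_{d+2}-xy$ has exactly $d|S|-\binom{d+1}{2}$ edges, so nothing in the circuit axioms forbids it), and then the only PSD stress available on $G'$ has rank $1$, which cannot be pushed through Lemma~\ref{lem:1ext} to reach rank $3$ on $G$. Your parenthetical escape --- that $G'$ ``contains a spanning $d$-circuit subgraph by Theorem~\ref{thm:tibor}, which is enough'' --- is wrong on both counts: Theorem~\ref{thm:tibor} says nothing of the sort, and a non-spanning circuit is not enough. Ruling out or routing around these bad reductions is precisely where the work lies (compare the $|V(H)|\in\{5,6,7\}$ sub-cases in the proof of Lemma~\ref{lem:3d9gr}). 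Second, your closing suspicion that a circuit which $1$-reduces to $H_d$ must be $(d+1)$-connected is false: a $1$-extension can attach the new vertex entirely inside one ``banana'' together with the separator, preserving the $d$-cut. The case where $G$ itself is not $(d+1)$-connected therefore cannot be wished away; the paper handles it with a separator analysis ($|A|=2$, $|B|=3$, counting the nine non-edges) and finds that a single $1$-reduction sometimes does not reach a spanning circuit at all --- one must also insert a missing edge, or follow with a $0$-reduction to land on $K_{d+3}-\{e,f\}$.

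Two smaller points. In the $(d+1)$-connected case the paper bypasses Lemma~\ref{lem:1ext} entirely: it forms $G - v + K(N_G(v))$, shows that graph is globally $d$-rigid via Theorem~\ref{thm:tibor}, deduces that $G$ itself is globally $d$-rigid by \cite[Lemma 4.1]{t1}, and then Theorem~\ref{thm: gen ur} gives a PSD stress of rank $n-d-1=4$ on $G$ directly; this sidesteps your first gap in that case. Also, the digression about degree-$(d+2)$ vertices is unnecessary (the hypothesis $\delta(G)=d+1$ already supplies a vertex of degree exactly $d+1$), and the appeal to ``a standard result'' guaranteeing an admissible $1$-reduction in a $d$-rigid graph is not something you can cite for $d\ge 3$; it is salvageable here only because $G-v$ is isostatic, which you should say explicitly.
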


\begin{proof}
     First suppose $G$ is a $(d+1)$-connected $d$-rigid $d$-circuit with a vertex $v$ of degree $d+1$.
   Then $G-v$ is $d$-rigid.
    Since $G$ is $(d+1)$-connected and redundantly $d$-rigid,
    the graph $G'=G-v_0+K(N_G(v_0))$ must also be $(d+1)$-connected and redundantly $d$-rigid.
    By Theorem~\ref{thm:tibor},
    $G'$ is globally $d$-rigid,
    and hence by \cite[Lemma 4.1]{t1},
    $G$ is globally $d$-rigid.
    The result follows from Theorem \ref{thm: gen ur}.
    
    Now suppose $G$ is not $(d+1)$-connected.
    Since $G$ is $d$-rigid, there exists a separating set $C \subset V$ of size $d$.
    As $\delta(G)=d+1$,
    $G-C$ will have exactly two connected components $G[A],G[B]$ where $A=\{a_1,a_2\}$ and $B=\{b_1,b_2,b_3\}$. The complement $G^c$ of $G$ has exactly 9 edges. Since $K_{d+2}$ is not a subgraph of $G$, $G[C]$ is not complete. Since $|E^c(A, B)|=6$ it follows that 
  $|E^c[B \cup C]|=3$ and $|E^c[C]| \in \{1,2,3\}$. 
    
    If $|E^c[C]|=1$ and $G[B]\cong K_3$ then, without loss of generality, $b_1,b_2$ have degree $d+1$ in $G$ and are incident to edges of $E^C(B,C)$. We can apply a 1-reduction at $b_1$ and add the missing edge incident to $b_2$ to result in a smaller $d$-circuit. A similar argument applies if $G[B]$ has 2 edges. In both cases the resulting $d$-circuit has $d+4$ vertices and is not $(d+1)$-connected. Hence it is $H_d$.
    This case is completed by Lemma \ref{lem:1ext}.
    
    Now assume $|E^c[C]| \geq 2$.
    If three non-edges meet at a vertex $c \in C$,
    then $G[B \cup C -c]$ is isomorphic to $K_{d+2}$,
    contradicting that $G$ is a $d$-circuit.
    Hence there are not,
    and there exists a 1-reduction at $a_1$ followed by a 0-reduction at $a_2$ resulting in the graph $K_{d+3}-\{e,f\}$ where $e$ and $f$ do not share a vertex. The result now follows from Theorem~\ref{thm:tibor}, Theorem~\ref{thm: gen ur}, and Lemma~\ref{lem:1ext}.
\end{proof}

The proof of the following lemma is long and technical, so we defer it to the end of the paper (see Section \ref{app:b}).

\begin{lemma}\label{lem:3d9circcut}
    Suppose $G$ is a 3-rigid 3-circuit with $9$ vertices.
    Then $\mlt(G) \geq 5$.
\end{lemma}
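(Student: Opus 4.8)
The plan is to reduce the statement to a claim about equilibrium stresses and then carry out a structural reduction. By Corollary~\ref{cor:gr+cir}, $\gcr(G)=5$, so by Theorem~\ref{thm: uhler gcr mlt} we have $\mlt(G)\le 5$; hence it suffices to prove $\mlt(G)\ge 5$, and by Theorem~\ref{thm: main mlt stress} this amounts to exhibiting a generic $3$-dimensional framework $(G,p)$ supporting a non-zero PSD equilibrium stress. Two cases are immediate. If $G$ is globally $3$-rigid, Corollary~\ref{cor:gr+cir} gives $\mlt(G)=5$. If $G$ is a cone, say $G=\operatorname{cone}(H)$ with $H$ on $8$ vertices, then $H$ is a $2$-rigid $2$-circuit, so $\gcr(H)=4$ by Corollary~\ref{cor:gr+cir}, hence $\mlt(H)=4$ by Theorem~\ref{thm: equality of gcr and mlt}, and Lemma~\ref{lem:cone} gives $\mlt(G)=5$. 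So from now on assume $G$ is neither globally $3$-rigid nor a cone.

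Next I would record the structure of $G$. Being a spanning $3$-circuit on $9$ vertices, $G$ has exactly $22$ edges, every proper subgraph of $G$ is $(3,6)$-sparse (in particular $G$ has no $K_5$ subgraph, since $K_5$ is a $3$-circuit), and every vertex has degree at least $4$; since the degrees sum to $44<45$, some vertex $v_0$ has degree exactly $4$, and for such a vertex $G-v_0$ is $3$-independent with $3\cdot 8-6$ edges, hence $(3,6)$-tight and $3$-rigid, while $N_G(v_0)$ does not induce $K_4$ and so contains a non-edge of $G$. If $G$ also has a $3$-vertex-cut $C$ — the situation that must be analysed carefully — then the minimum-degree bound rules out singleton components of $G-C$, the circuit property forbids $G[C]\cong K_3$ (otherwise $G$ would be a $3$-clique-sum of two proper induced subgraphs, one of which is a proper dependent subgraph of $G$), and counting the $22$ edges forces the component sizes of $G-C$ into $\{(2,2,2),(2,4),(3,3)\}$ with $|E(G[C])|\le 2$.

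The core step is a $3$-dimensional $1$-reduction: choose a degree-$4$ vertex $v$ and a non-edge $ab\subseteq N_G(v)$, delete $v$, add $ab$, and obtain $G_0$ on $8$ vertices, with $ab$ chosen so that $G_0$ is again a $3$-circuit — equivalently, so that no proper $(3,6)$-tight subset of $G-v$ contains both $a$ and $b$ (when $|E(G[N_G(v)])|=5$ there is only one candidate edge, which is the easy case). Granting this: $G$ is a $1$-extension of $G_0$, and since $1$-extensions preserve global $3$-rigidity but $G$ is not globally $3$-rigid, $G_0$ is not globally $3$-rigid. Now $G_0$ is a $3$-rigid $3$-circuit on $8=3+5$ vertices, and every such graph has minimum degree $4$, so Lemma~\ref{lem:d+5circcut} applies and yields a generic realization of $G_0$ with a PSD equilibrium stress of rank at least $3$; as $G_0$ is not globally $3$-rigid, the standard sufficient condition for generic global rigidity (a generic framework carrying a PSD stress of rank $n-d-1$ is globally $d$-rigid) forces the rank to be exactly $3=8-3-2$. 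Then Lemma~\ref{lem:1ext} lifts this to a generic realization of $G$ with a PSD equilibrium stress of rank at least $9-3-2=4>0$, and Theorem~\ref{thm: main mlt stress} gives $\mlt(G)\ge 5$, hence $\mlt(G)=\gcr(G)=5$.

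The main obstacle — and the reason this lemma is proved separately in Section~\ref{app:b} — is establishing that a usable $1$-reduction exists. For each cut configuration, and each distribution of the $14$ non-edges of $G$ among $C$ and the components of $G-C$, one must produce a degree-$4$ vertex $v$ and a non-edge $ab\subseteq N_G(v)$ whose addition to $G-v$ creates no $(3,6)$-over-count subgraph (no $K_5$ and no smaller circuit), so that $G_0$ is a genuine $3$-circuit; the degree bound forces vertices in small components to have very rigid neighbourhoods (a vertex in a size-$2$ component has degree exactly $4$, with neighbourhood its partner together with $C$), which reduces the task to a finite but delicate case check closely parallel to the proof of Lemma~\ref{lem:d+5circcut}. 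In the residual configurations where no $1$-reduction works and $G$ is neither a cone nor globally $3$-rigid, one handles $G$ directly: after restoring the missing edges inside $C$, recognise $G$ as an (iterated) clique-sum of complete graphs and smaller $3$-circuits and strip off the deleted edges using Lemma~\ref{lem: psd clique sum} — this uses that the unique equilibrium stress of any $3$-circuit is non-zero on every edge, so each smaller piece supplies a PSD equilibrium stress non-zero on the shared edge (via the smaller cases together with Theorem~\ref{thm:tibor} and Lemma~\ref{lemma: H_n}) — while certain symmetric configurations are instead excluded by showing that they fail to be $3$-rigid.
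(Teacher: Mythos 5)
Your opening reductions are fine (the $\gcr(G)=5$ and cone/globally-rigid dispatches, the degree and edge counts, the rank-exactly-$3$ argument via non-global-rigidity), and your overall plan — reduce at a degree-$4$ vertex and lift a PSD stress back through Lemma~\ref{lem:1ext} using Lemma~\ref{lem:d+5circcut}, falling back on Lemma~\ref{lem: psd clique sum} for separable cases — is indeed the paper's strategy in outline. But the entire content of the lemma is the case analysis you defer. Your core step assumes one can choose a non-edge $ab\subseteq N_G(v)$ so that $G-v+ab$ is a \emph{spanning} $3$-circuit on $8$ vertices, and you relegate the failures to "residual configurations" arising from $3$-separations. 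That localisation is unsubstantiated: since $G-v$ is $(3,6)$-tight, adding $ab$ always creates a circuit, but it may be a circuit on $5$, $6$ or $7$ vertices (e.g.\ a $K_5$), and this happens even when $G$ is $4$-connected. The paper's Lemma~\ref{lem:3d9gr} has to treat each of these sizes separately: in the $K_5$ case it must argue that a \emph{different} degree-$4$ vertex admits a reduction avoiding a $5$-clique, and in the $6$- and $7$-vertex cases it rebuilds $G$ from $H$ or $H_3$ by carefully chosen sequences of $0$- and $1$-extensions so that Lemma~\ref{lem:1ext} applies at each stage. None of this is addressed by your sketch, and "no proper $(3,6)$-tight subset of $G-v$ contains both $a$ and $b$" is exactly the kind of statement (cf.\ the Claim in the proof of Theorem~\ref{thm:atmost24}) that requires a page of structural argument, not an assertion.

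Your proposed handling of the residual $3$-separated cases also would not go through as described. Two concrete obstructions from the paper's proof: (i) the graph of Figure~\ref{fig:3k5s}(a) — three copies of $K_5$ glued along a common triangle with two triangle edges deleted — cannot be obtained by iterating Lemma~\ref{lem: psd clique sum}, since that lemma deletes a single edge from a two-piece $k$-sum whose intersection is a complete graph, and after the first deletion the intersection is no longer a clique and you have no control over which edges of the glued graph carry a nonzero stress (the intermediate graph is not a circuit, so "the stress is nonzero on every edge" does not apply); the paper needs a bespoke gluing of three $K_5$ stresses using affine/projective invariance. (ii) The right-hand graph of Figure~\ref{fig:amove} is $3$-rigid, admits no usable reduction, and is not a deleted clique-sum in the required sense; the paper disposes of it only by exhibiting an explicit generic realization with a computed rank-$4$ PSD stress (as in Lemma~\ref{lemma: 6 regular}). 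So "recognise $G$ as an iterated clique-sum or exclude it as non-$3$-rigid" misses genuinely rigid sporadic graphs that need explicit constructions. In short, the proposal is a correct high-level plan but the finite case check it waves at is the proof, and the fallback arguments offered for the hard cases are not adequate.
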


\begin{lemma}\label{lemma: d circuit d+4 vertices}
    Let $G$ be a $d$-circuit on at most $d+4$ vertices.
    Then $\mlt(G) \ge d+2$.
\end{lemma}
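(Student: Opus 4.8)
The plan is to prove the statement by induction on the number of vertices $n$ of $G$, where $d+2 \le n \le d+4$. The base case $n = d+2$ is the cleanest: here $G$ must be $K_{d+2}$ itself (the unique $d$-circuit on $d+2$ vertices), which is globally $d$-rigid, so Corollary~\ref{cor:gr+cir} gives $\mlt(G) = d+2$. For the inductive step with $n = d+3$ or $n = d+4$, I would split according to whether $G$ is $(d+1)$-connected. If $G$ is $(d+1)$-connected, then by Theorem~\ref{thm:tibor} (applied with $k = 3$ or $k = 4$), since a $d$-circuit is redundantly $d$-rigid, $G$ is in fact globally $d$-rigid, and Corollary~\ref{cor:gr+cir} again yields $\mlt(G) = d+2 \ge d+2$.

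The remaining case is when $G$ is a $d$-rigid $d$-circuit on $d+3$ or $d+4$ vertices that is \emph{not} $(d+1)$-connected. On $d+3$ vertices I would argue that this cannot happen: a $d$-circuit has $dn - \binom{d+1}{2} + 1$ edges and is $(d,\binom{d+1}{2})$-sparse away from being tight only by the single extra edge, and a separating set of size $\le d$ forces one side to induce too few edges to be $d$-rigid, contradicting that $G$ is a $d$-circuit (one could alternatively cite that the only small non-$(d+1)$-connected $d$-circuit is $H_d$, which has $d+4$ vertices, via Lemma~\ref{lemma: H_n}). So on $d+3$ vertices $G$ is always $(d+1)$-connected and the previous paragraph applies. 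On $d+4$ vertices, Lemma~\ref{lemma: H_n} tells us that the \emph{only} non-$(d+1)$-connected $d$-rigid $d$-circuit is $H_d = $ two copies of $K_{d+2}$ glued along a $K_d$ with a common edge removed. For $H_d$ I would exhibit the PSD equilibrium stress directly: each copy of $K_{d+2}$ is globally $d$-rigid on $d+2$ vertices, and I would apply Lemma~\ref{lem:1ext} (observing $H_d$ is a $1$-extension of $K_{d+2}$) — or, more cleanly, Lemma~\ref{lem: psd clique sum}, since $H_d$ is the $d$-sum of two copies of $K_{d+2}$ with the shared edge $ij$ deleted, and each $K_{d+2}$ supports a PSD stress nonzero on $ij$ because it is globally $d$-rigid with $n - d - 1 = 1 \ge 1$ via Theorem~\ref{thm: gen ur}. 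This produces a generic framework of $H_d$ with a nonzero PSD equilibrium stress, and by Theorem~\ref{thm: main mlt stress} this forces $\mlt(H_d) \ge d+2$.

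The main obstacle I anticipate is the bookkeeping in the non-$(d+1)$-connected case: one must verify that the hypotheses of Lemma~\ref{lem: psd clique sum} (or Lemma~\ref{lem:1ext}) are genuinely met for $H_d$ — in particular that the PSD stress on each $K_{d+2}$ can be taken nonzero on the specific deleted edge, and that the resulting structure lands back on $H_d$ rather than on something with additional edges. A secondary subtlety is ruling out non-$(d+1)$-connected $d$-circuits on exactly $d+3$ vertices cleanly; invoking Lemma~\ref{lemma: H_n} (which classifies all such circuits on $\le d+4$ vertices) handles this uniformly and is the approach I would take. Everything else — the base case and the globally rigid case — follows essentially immediately from Corollary~\ref{cor:gr+cir} and Theorem~\ref{thm:tibor}.
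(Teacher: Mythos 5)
Your proposal is correct and follows essentially the same route as the paper: split on whether $G$ is $(d+1)$-connected, use Lemma~\ref{lemma: H_n} to identify $H_d$ as the only non-$(d+1)$-connected case and handle it via Lemma~\ref{lem: psd clique sum} with the PSD stress of $K_{d+2}$, and dispatch the $(d+1)$-connected case by Theorem~\ref{thm:tibor} and Corollary~\ref{cor:gr+cir}. The ``induction'' framing and the separate $n=d+2$ base case are superfluous (it is just a case analysis, and $K_{d+2}$ is already covered by the $(d+1)$-connected branch), but the extra care you take in verifying that the $K_{d+2}$ stresses are nonzero on the deleted edge is a detail the paper leaves implicit.
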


\begin{proof}
    By Lemma~\ref{lemma: H_n}, if $G$ is not $(d+1)$-connected, then $G=H_d$.
    Since $K_{d+2}$ has a PSD equilibrium stress it follows from Lemma \ref{lem: psd clique sum} that $H_d$ has a PSD equilibrium stress and hence Theorem 
    \ref{thm: main mlt stress} implies that $\mlt(H_d)\ge d+2$.
    If $G$ is $(d+1)$-connected, then $G$ is globally rigid by Theorem~\ref{thm:tibor} and Corollary \ref{cor:gr+cir} gives the result.
\end{proof}

\begin{proof}[Proof of Theorem \ref{THM:SMALL}]
For any graph $H$, if we choose $d$ so that $\gcr(H)=d+2$ then $H$ is not $d$-independent and therefore contains a $d$-circuit.
In light of Lemma~\ref{lem: mlt monotone}, it therefore suffices to prove any $d$-circuit $G$ with $9$ or fewer vertices has $\mlt(G) \geq d+2$.
By Theorem~\ref{thm: equality of gcr and mlt}, we may assume $d \ge 3$ and by Lemma~\ref{lemma: d circuit d+4 vertices}, we may assume $n \ge d+5$.
Thus either $d=3$ and $n\in \{8,9\}$, or $d=4$ and $n=9$. 

Assume that $G$ is $d$-rigid, i.e.~that it has $dn-\binom{d+1}{2}+1$ edges.
If $d=3$ and $n=9$ then $\mlt(G) \geq d+2$ by Lemma \ref{lem:3d9circcut}.
If $G$ has a vertex of degree $d+1$ and $d = 3$ and $n = 8$, or $d=4$ and $n=9$,
then $\mlt(G) \geq d+2$ by Lemma \ref{lem:d+5circcut} and Theorem 
\ref{thm: main mlt stress}.
If $G$ does not have a vertex of degree $d+1$, then $d=4$, $n=9$, and $G$ is $6$-regular.
In this case, Lemma~\ref{lemma: 6 regular} implies $G$ is globally rigid and
Corollary~\ref{cor:gr+cir} gives the result.

Thus we may assume that $G$ is not $d$-rigid.
Since $n\leq 9$ and $d\geq 3$, \cite[Theorem 1]{grasegger2020flexible} implies that $G$ is obtained from two rigid $d$-circuits by gluing them together over a common complete subgraph on $(d-1)$ or $(d-2)$ vertices and deleting exactly one edge from the intersection.  From above, the two rigid $d$-circuits that we glue
are globally $d$-rigid.  Theorem \ref{thm: gen ur} implies that there is a generic framework for each of these with 
a non-zero PSD equilibrium stress.  We may then apply  Lemma \ref{lem: psd clique sum} and Theorem \ref{thm: main mlt stress}
to conclude that $\mlt(G) = d+2$.
\end{proof}

\section{Graphs with few edges}
\label{sec:few edges}

The purpose of this section is to prove Theorem \ref{thm:atmost24}.
We will use the following results in the proof.

\begin{thm}[Jackson and Jord\'an \cite{JJ}]\label{thm:jj3sparse}
Let $G=(V,E)$ be a graph and suppose $i(X)\leq \frac{1}{2}(5|X|-7)$ for all $X\subset V$ with $|X|\geq 2$ then $G$ is 3-independent.
\end{thm}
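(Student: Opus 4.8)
The statement asserts that a graph obeying a strong sparsity count is independent in the generic $3$-dimensional rigidity matroid, so I would prove it by the standard Henneberg-type induction: build $G$ up from small graphs using $0$- and $1$-extensions, each of which preserves $3$-independence by Lemma~\ref{lem:0ext}. Write $n=|V|$. The hypothesis applied to $X=V$ gives $2|E|=2\,i(V)\le 5n-7$, so $\delta(G)\le 4$; a slightly more careful count shows that if $\delta(G)=4$ then at least seven vertices have degree exactly $4$, and in particular $n\ge 7$.

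For the base case $n\le 4$ every graph is $3$-independent, since the $3$-rigidity matroid on at most four vertices is free. For the inductive step there are two cases. If $\delta(G)\le 3$, pick $v$ with $\deg_G(v)\le 3$; then $G-v$ satisfies the hypothesis (the count $i$ is unchanged on subsets of $V\setminus\{v\}$), so $G-v$ is $3$-independent by induction, and since $G$ is a $0$-extension of $G-v$ — or a subgraph of one, if $\deg_G(v)<3$ — Lemma~\ref{lem:0ext} shows $G$ is $3$-independent. The other case is $\delta(G)=4$, hence $n\ge 7$: choose a degree-$4$ vertex $v$. Its neighbourhood is not complete, for otherwise $\{v\}\cup N_G(v)$ would induce a $K_5$, a set of size $5$ carrying $10>9=\tfrac12(5\cdot 5-7)$ edges. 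So $v$ has two non-adjacent neighbours. If some degree-$4$ vertex $v$ has non-adjacent neighbours $x,y$ for which $G':=(G-v)+xy$ \emph{also} satisfies the hypothesis, then $G$ is a $1$-extension of $G'$, so $G'$ is $3$-independent by induction (it has $n-1$ vertices) and $G$ is $3$-independent by Lemma~\ref{lem:0ext}; this finishes the proof. Hence it suffices to derive a contradiction from the assumption that \emph{no} such $1$-reduction exists.

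Under that assumption, for every degree-$4$ vertex $v$ and every non-adjacent pair $x,y\in N_G(v)$ there is a ``blocking set'': a set $X$ with $x,y\in X$, $v\notin X$ and $i_{G-v}(X)+1>\tfrac12(5|X|-7)$. Combining this with $i_{G-v}(X)=i_G(X)\le\tfrac12(5|X|-7)$ and chasing parities, one sees that any blocking set has $|X|\ge 5$ and $i_G(X)=\lfloor\tfrac12(5|X|-7)\rfloor$; that is, $X$ is as dense as the hypothesis allows. The plan for the contradiction is to feed these near-extremal sets into the submodular inequality for $\phi(X):=\tfrac12(5|X|-7)-i_G(X)$, namely $\phi(X\cup Y)+\phi(X\cap Y)\le\phi(X)+\phi(Y)$ when $X\cap Y$ is large enough (this follows from supermodularity of $i$, and the hypothesis is exactly $\phi\ge 0$ on sets of size $\ge 2$). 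Intersecting and unioning blocking sets — taken over the non-adjacent pairs in the neighbourhood of a fixed degree-$4$ vertex, and, when those alone do not suffice, over several of the (at least seven) degree-$4$ vertices — one builds a single set $X^*$, possibly all of $V$, with $\phi(X^*)<0$, contradicting the hypothesis.

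The main obstacle is exactly this last step. Unlike the corresponding argument for Laman graphs in the plane, the bound here is tight only in the half-integer regime and the neighbourhood of a degree-$4$ vertex may be missing several edges, so there are several non-adjacent pairs that must be blocked simultaneously; making the submodular combination go through requires careful bookkeeping of which vertices and edges (including the four edges at each deleted degree-$4$ vertex) end up inside $X^*$, together with a separate sub-analysis when the blocking sets are nested or meet pairwise in very small sets. This is precisely the technical content of Jackson and Jord\'an's proof \cite{JJ}, and I would not expect a materially shorter route.
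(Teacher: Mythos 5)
This statement is not proved in the paper at all: it is imported verbatim from Jackson and Jord\'an \cite{JJ}, so there is no internal proof to compare against. Your proposal correctly identifies the skeleton of the actual argument in \cite{JJ} -- induction via $0$- and $1$-reductions, the degree count $2|E|\le 5n-7$ forcing $\delta(G)\le 4$, the observation that a degree-$4$ vertex has a non-adjacent pair of neighbours because the count forbids $K_5$, and the reduction of the whole problem to showing that some pair $x,y\in N(v)$ admits a $1$-reduction whose result still satisfies the count. Your preliminary computations (at least seven degree-$4$ vertices when $\delta=4$; blocking sets have $|X|\ge 5$ and are extremal for the count) are also correct.

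However, the proposal has a genuine gap, and you concede it yourself: the entire contradiction in the $\delta(G)=4$ case -- combining the blocking sets of the various non-adjacent pairs via submodularity of $\phi(X)=\tfrac12(5|X|-7)-i(X)$ into a single set $X^*$ with $\phi(X^*)<0$ -- is described only as a ``plan.'' This is not a routine verification that can be waved through: $\phi\ge 0$ is guaranteed only for $|X|\ge 2$, so when two blocking sets meet in at most one vertex the submodular inequality gives nothing, and one must also account for the edges at the deleted vertex and for the possibility that the blocking sets are nested or that no single degree-$4$ vertex suffices. That case analysis is the theorem; everything before it is standard. As written, the proposal is an accurate outline of the strategy of \cite{JJ} rather than a proof, which is consistent with the paper's decision to cite the result rather than reprove it.
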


The next lemma generalises \cite[Lemma 12]{grasegger2020flexible} but a careful reading of their proof shows that it works in the generality we state.

\begin{lemma}\label{lem:1ext of 2sum}
    Let $G$ be the 2-sum of two 3-rigid 3-circuits $G_1,G_2$.
    If $G'$ is formed from $G$ by a 1-extension,
    then $G'$ is either minimally 3-rigid,
    or $G'$ is the 2-sum of two 3-rigid 3-circuits.
\end{lemma}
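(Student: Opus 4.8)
The strategy is to analyze the structure of a $1$-extension $G'$ of a $2$-sum $G = G_1 \oplus_2 G_2$ directly, following the blueprint of the proof of \cite[Lemma 12]{grasegger2020flexible}, but keeping track of where the new vertex and its incident edges land relative to the cut. Write $G_1 \cap G_2 = \{a, b\}$ with $ab$ the shared edge. The $1$-extension removes some edge $xy$ of $G$ and adds a new vertex $z$ adjacent to $x$, $y$, and one further vertex $w$. Since $G$ is $3$-rigid (being a $2$-sum of $3$-rigid graphs along an edge) and, in fact, a $3$-circuit with exactly one ``extra'' edge, $G'$ has the right edge count to be minimally $3$-rigid, and by Lemma~\ref{lem:0ext} applied after a suitable reduction we know $G'$ contains a spanning $(3,6)$-tight subgraph; the real content is the dichotomy.

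First I would split into cases according to which side(s) the removed edge $xy$ and the three new edges $zx, zy, zw$ touch. If $xy$ lies strictly inside $G_1$ (neither $x$ nor $y$ equal to $a$ or $b$) and all of $x, y, w$ lie in $V(G_1)$, then the $1$-extension is performed entirely within $G_1$; here I invoke the fact that a $1$-extension of a $3$-rigid $3$-circuit is either minimally $3$-rigid or again a $3$-rigid $3$-circuit (this is the single-piece version used implicitly in \cite{grasegger2020flexible}), and then $G'$ is the $2$-sum of $G_2$ with whichever of the two outcomes occurred for the modified $G_1$. The symmetric case with everything inside $G_2$ is identical. The delicate cases are when $xy$ is the shared edge $ab$, or when the new vertex $z$ is adjacent to vertices on both sides of the cut — i.e. $w$ lies on the opposite side of $\{x,y\}$. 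When $xy = ab$: then in $G - ab$ the two pieces are only glued along the two \emph{vertices} $a, b$ (a ``$2$-sum minus the edge'', i.e. a vertex $2$-separation), and adding $z$ adjacent to $a$, $b$, $w$ restores a clique-like connection; one checks that $z$ together with $a, b$ forms a new $K_3$ realizing $G'$ as a $2$-sum, with the piece containing $w$ gaining the triangle $zab$ and the other piece being $G_i - ab + \{za, zb\}$, which is a $1$-extension (a $0$-extension followed by an edge swap, really a $1$-extension) of $G_i$ — hence minimally rigid or a $3$-circuit by the single-piece fact again. When $z$ straddles the cut, say $x, y \in V(G_1)$ and $w \in V(G_2) \setminus \{a,b\}$: here I would argue that $z$ can be "pushed" to one side by a rigidity-matroid exchange — the edge $zw$ crossing the cut, combined with the fact that $\{a, b\}$ is a $2$-separator, forces $zw$ to be parallel (in the $3$-rigidity matroid) to a path through $a$ or $b$, so $G'$ is spanned by a subgraph that is still a $2$-sum along a (possibly different) $K_2$; alternatively, $G'$ is minimally rigid outright.

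The main obstacle I anticipate is precisely this straddling case: showing that a new vertex attached across a $2$-separation cannot create a ``genuinely $3$-connected'' obstruction, i.e. that $G'$ either stays decomposable as a $2$-sum or becomes minimally rigid, with no third possibility such as a $3$-rigid $3$-circuit that is $3$-connected. The tool to rule this out is the count: $G'$ has $3|V'| - 6$ edges, so if it is \emph{not} minimally $3$-rigid it must be $3$-dependent, hence contain a $3$-circuit $C$ as a proper subgraph; but then $|E(G')| - |E(C)| \le 1$ and one shows $C$ must respect the $2$-separation inherited from $G$ (any $3$-circuit using vertices on both sides must route at least $3$ edge-disjoint... — more precisely, a $3$-circuit cannot have a $2$-vertex-cut unless it is one of the $H_d$-type graphs, which here it is not, by Lemma~\ref{lemma: H_n} and the vertex count), forcing $C$ to sit inside one enlarged piece and exhibiting the $2$-sum structure. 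I would organize the write-up so that the counting argument and the $2$-separator analysis are stated once and then applied uniformly across the cases, citing Theorem~\ref{thm:tibor} and Lemma~\ref{lemma: H_n} for the structural claims and Lemma~\ref{lem:0ext} for the independence bookkeeping, and flagging explicitly which lines of \cite[Lemma 12]{grasegger2020flexible} carry over verbatim and which need the mild generalization claimed in the paragraph preceding the lemma.
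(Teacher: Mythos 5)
The paper does not actually prove Lemma~\ref{lem:1ext of 2sum}: it is justified only by the remark that the proof of \cite[Lemma 12]{grasegger2020flexible} goes through in this generality. Your plan has the same overall shape as that argument (a case analysis on where the deleted edge and the new vertex's neighbours lie relative to the $2$-separation), but as written it contains errors of set-up and the decisive case is asserted rather than proved. On the set-up: a $3$-dimensional $1$-extension joins the new vertex $z$ to $x$, $y$ \emph{and two} further vertices, so $z$ has degree $4$, not $3$, and with two extra neighbours the straddling configurations are genuinely different from the ones you enumerate. Moreover, for the statement to be non-vacuous the ``$2$-sum'' here must be the \emph{deleted} $2$-sum of \cite{grasegger2020flexible} (glue along a $K_2$ and remove the shared edge): if the shared edge $ab$ is kept, as in Definition~\ref{def: clique sum} and in your reading, then $|E(G)|=3|V|-5$ and $|E(G')|=3|V'|-5$, so $G'$ could never be minimally $3$-rigid. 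Your count $|E(G')|=3|V'|-6$ comes out right only because the two slips (retained shared edge, degree-$3$ new vertex) happen to cancel. Relatedly, the ``single-piece fact'' you lean on --- that a $1$-extension of a $3$-rigid $3$-circuit is minimally $3$-rigid or again a $3$-rigid $3$-circuit --- is mis-stated: such a $1$-extension has $3n-5$ edges, so the first alternative is impossible, and the second alternative (that the result is again a circuit rather than a rigid dependent graph with coloops) is precisely something that needs an argument.

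More seriously, the case in which $z$ has neighbours on both sides of the cut is where all the work lies, and your treatment of it is not a proof: ``$zw$ is parallel in the $3$-rigidity matroid to a path through $a$ or $b$'' has no precise meaning and no exchange is exhibited. The closing counting argument is also unsupported: $|E(G')|-|E(C)|\le 1$ does not follow from anything stated (nullity $1$ gives a unique circuit but places no bound on the number of coloops), and the claim that a $3$-circuit with a $2$-vertex cut must be ``of $H_d$ type'' confuses Lemma~\ref{lemma: H_n} (which concerns rigid circuits on at most $d+4$ vertices and $(d+1)$-connectivity) with the structure theorem for \emph{flexible} circuits from \cite{grasegger2020flexible}; invoking the latter as a black box here would be circular with the very generalisation being claimed. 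To complete the proof you would need to run the actual case analysis of \cite[Lemma 12]{grasegger2020flexible} with a degree-$4$ vertex: show, for instance via Lemma~\ref{lem:rankplus1}(b), that if $z$ has a neighbour in each of $V(G_1)\setminus V(G_2)$ and $V(G_2)\setminus V(G_1)$ then $G'$ is $3$-rigid, and otherwise exhibit the enlarged side explicitly as a $0$- or $1$-extension of a rigid circuit and verify that it is again a rigid circuit.
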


\begin{lemma}[{\cite[Lemma 11.1.9]{Wlong}}]
\label{lem:rankplus1}
Let $G_1$, $G_2$ be subgraphs of a graph $G$ and suppose that $G = G_1 \cup G_2$.
\begin{enumerate}[(a)]
    \item If $G_1 \cap G_2$ is 3-rigid and $G_1$,$G_2$ are 3-independent then $G$ is 3-independent.
    \item If $|V (G_1) \cap V (G_2)|\leq 2$, $u\in V(G_1) \setminus V(G_2)$ and $v_2 \in V(G_2) \setminus V (G_1)$ then $\rank R(G + uv,p) = \rank R(G,p) + 1$ for any generic realization $p$ in $\mathbb{R}^3$. 
\end{enumerate} 
\end{lemma}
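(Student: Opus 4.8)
The plan is to treat the two parts separately. Throughout I would fix a generic configuration $p$ of $V(G)$ in $\mathbb{R}^3$, write $p_i$ for its restriction to $V(G_i)$ (which is again generic), and use that $\ker R(H',p')$ is the space of infinitesimal flexes of a framework $(H',p')$, that $3$-rigidity of $H'$ is equivalent to all of these flexes being \emph{trivial} (restrictions of infinitesimal isometries of $\mathbb{R}^3$), and that the space $T_W$ of trivial infinitesimal flexes on a vertex set $W$ has dimension $6$ at a generic configuration once $|W|\ge 3$ (and $5$, $3$, $0$ when $|W| = 2, 1, 0$), so that any $3$-rigid and $3$-independent graph on $W$ satisfies $|E| = 3|W| - \dim T_W$.

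For part (a), I would first note that $H := G_1\cap G_2$, being a subgraph of the $3$-independent graph $G_1$, is itself $3$-independent; combined with the hypothesis that $H$ is $3$-rigid this gives $|E(H)| = 3|V(H)| - \dim T_{V(H)}$. The main step is a dimension count on infinitesimal flexes. Because $G = G_1 \cup G_2$, every edge of $G$ lies in $G_1$ or in $G_2$, so restriction identifies $\ker R(G,p)$ with the kernel of the linear map $\ker R(G_1,p_1)\times \ker R(G_2,p_2)\to\mathbb{R}^{3|V(H)|}$ sending $(m_1,m_2)$ to $m_1|_{V(H)} - m_2|_{V(H)}$. I would then show that the image of this map is exactly $T_{V(H)}$: it lands inside $T_{V(H)}$ since each $m_i$ restricts to a flex of the rigid framework $(H,p|_{V(H)})$, hence to a trivial one; and it contains $T_{V(H)}$ since any trivial flex of $V(H)$ extends to a trivial infinitesimal flex of $(G_1,p_1)$, which paired with $m_2 = 0$ lies in the domain. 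Substituting $\dim\ker R(G_i,p_i) = 3|V(G_i)| - |E(G_i)|$, $\dim T_{V(H)} = 3|V(H)| - |E(H)|$, $|V(G)| = |V(G_1)| + |V(G_2)| - |V(H)|$ and $|E(G)| = |E(G_1)| + |E(G_2)| - |E(H)|$ should yield $\dim\ker R(G,p) = 3|V(G)| - |E(G)|$, i.e.\ that $G$ is $3$-independent.

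For part (b), set $C := V(G_1)\cap V(G_2)$, so $|C|\le 2$. The structural point is that, because $G = G_1\cup G_2$, there is no edge of $G$ with one end in $V(G_1)\setminus V(G_2)$ and the other in $V(G_2)\setminus V(G_1)$. I would pick an infinitesimal isometry $\mu$ of $\mathbb{R}^3$ that fixes $C$ pointwise -- such $\mu$ form a space of dimension at least $1$ (infinitesimal rotations about a point, about a line, or all infinitesimal isometries, according as $|C|$ equals $1$, $2$, or $0$) -- and, for generic $p$, choose $\mu$ with $\mu(v)\ne 0$. Extending $\mu$ by $0$ on $V(G_1)\setminus V(G_2)$ gives a vector field $m$ on $V(G)$ that is an infinitesimal flex of $(G,p)$: edges inside $G_1$ see $m = 0$ at both ends (because $\mu$ vanishes on $C \subseteq V(G_1)$), edges inside $G_2$ see the trivial flex $\mu$, and no further edges exist. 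Since $m(u) = 0$, we have $(m(u) - m(v))\cdot(p(u) - p(v)) = -\mu(v)\cdot(p(u) - p(v))$, a polynomial in the coordinates of $p$ that is not identically zero and hence nonvanishing for generic $p$. As the row of $R(G + uv, p)$ indexed by $uv$ pairs nontrivially with $m \in \ker R(G,p)$, that row is not in the row space of $R(G,p)$, so $\rank R(G + uv, p) \ge \rank R(G,p) + 1$; the reverse inequality is automatic, so equality holds for every generic $p$.

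The only nontrivial idea is the dimension count of part (a), a form of Whiteley's gluing lemma, so I expect the bulk of the remaining work to be genericity bookkeeping: in part (a), confirming that $p_i$ and $p|_{V(H)}$ inherit genericity and that the identity $\dim T_{V(H)} = 3|V(H)| - |E(H)|$ survives the degenerate cases $|V(H)| \le 2$ (where $\dim T_{V(H)}$ is $5$ or $3$, not $6$); and in part (b), checking that the loci forcing $\mu(v) = 0$ or $\mu(v) \perp (p(u) - p(v))$ are proper subvarieties defined over $\mathbb{Q}$, so that a generic $p$ (with an appropriate choice of rotation axis for $\mu$) avoids them.
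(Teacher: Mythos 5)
Your argument is correct. The paper does not prove this lemma at all --- it is quoted verbatim from Whiteley's survey (Lemma~11.1.9 of \cite{Wlong}) --- and what you have written is essentially the standard proof from that source: part (a) is Whiteley's gluing/union lemma, done by identifying $\ker R(G,p)$ with the kernel of the difference map $\ker R(G_1,p_1)\times\ker R(G_2,p_2)\to\mathbb{R}^{3|V(H)|}$ and showing its image is the space of trivial flexes of the rigid intersection, and part (b) is the usual construction of a flex supported on one side of a small separator (a rotation about the line through the at most two shared points) that is not annihilated by the new row. The only points to tidy are notational ($\mu(v)$ should be $\mu(p(v))$, and $\mu$ depends on $p$, so the nonvanishing condition should be recorded as a single rational polynomial in the coordinates of $p$, e.g.\ $\bigl((p(c_2)-p(c_1))\times(p(v)-p(c_1))\bigr)\cdot(p(u)-p(v))$ when $|C|=2$), and these are exactly the bookkeeping items you already flagged.
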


In the proof of Theorem \ref{thm:atmost24} we will also use repeatedly the following counting argument. First we need some definitions. Let $G=(V,E)$ be a graph. 
Take $X,Y\subset V$. We will say that $X$ is \emph{3-critical} if $|X|\geq 3$ and $i(X)=3|X|-6$. Also, we will use $d_G(X,Y)$ to denote the number of edges in $G$ of the form $xy$ where $x\in X\setminus Y$ and $y\in Y\setminus X$ (again we will simply use $d(X,Y)$ if the graph is clear from the context).

\begin{rmk}\label{rem:count}
Let $G=(V,E)$ be $(3,6)$-sparse and suppose that $X,Y\subset V$ are 3-critical. Then
\begin{align*}
    3|X|-6+3|Y|-6 = i(X)+i(Y) &=i(X\cup Y)+i(X\cap Y)-d(X,Y) \\
    &\leq 3|X\cup Y|-6+i(X\cap Y)-d(X,Y).
\end{align*}
If $|X\cap Y|=2$ and $G[X\cup Y]$ has no edges or $|X\cap Y|\geq 3$ then $i(X\cap Y)=3|X\cap Y|-6$ and hence equality holds throughout and $d(X,Y)=0$. If $|X\cap Y|=2$ and $G[X\cap Y]\cong K_2$ then
$i(X\cap Y)=1$, $d(X,Y)\leq 1$ and $i(X\cup Y)=3|X\cup Y|-6-(1-d(X,Y))$. Lastly if $|X\cap Y|=1$ then $i(X\cap Y)=0$, $d(X,Y)\leq 3$ and $i(X\cup Y)=3|X\cup Y|-6-(3-d(X,Y))$.
\end{rmk}

\begin{lemma}\label{lem:4between}
    Let $G$ be a 3-connected graph formed from two disjoint 3-independent graphs $G_1$ and $G_2$ by adding at most 4 edges between them.
    Then $G$ is 3-independent.
\end{lemma}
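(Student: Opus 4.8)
The plan is to show directly that the rigidity matrix $R(G,p)$ has linearly independent rows at a generic $p \in \mathbb{R}^3$, by adding the (at most four) cross edges to $G_1 \cup G_2$ one at a time and tracking the rank. Write $e_1, \ldots, e_k$ ($k \le 4$) for the cross edges, and let $H_0 = G_1 \cup G_2$ (a disjoint union) and $H_j = H_0 + \{e_1, \ldots, e_j\}$, so $H_k = G$. Since $G_1$ and $G_2$ are $3$-independent and vertex-disjoint, $H_0$ is $3$-independent. The goal is then to prove that each successive $H_{j-1} \to H_j$ strictly increases the rank of the rigidity matrix, i.e.\ that $e_j$ is not in the span of the rows already present.

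The key structural tool is part (b) of Lemma \ref{lem:rankplus1}: if at stage $j-1$ the component of $H_{j-1}$ containing $e_j = xy$ can be written as $H_{j-1}' = A \cup B$ with $|V(A) \cap V(B)| \le 2$, $x \in V(A)\setminus V(B)$ and $y \in V(B)\setminus V(A)$, then adding $e_j$ raises the rank by one. So the heart of the argument is a bookkeeping step: after adding $j-1$ of the cross edges, the ``merged'' piece of the graph containing the endpoints of $e_j$ still admits a separation into two parts meeting in at most two vertices, with the two endpoints of $e_j$ on opposite sides and outside the separator. Because we are adding at most four cross edges total between only two blobs $G_1, G_2$, the amount of ``mixing'' is tightly controlled; I would do a short case analysis on how the first few cross edges are distributed among the vertices of $G_1$ and $G_2$ (e.g.\ whether they all share an endpoint, form a matching, form a path, etc.), and in each case exhibit the required separation. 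The worst cases are when several cross edges share a vertex on one side, since then the naive separation $(G_1, G_2)$ no longer isolates the new edge's endpoints; there one enlarges one side by the shared vertex (using $|V(G_1) \cap V(G_2)| = 0$ to keep the intersection small) to restore the hypothesis of Lemma \ref{lem:rankplus1}(b). Since $3$-connectivity of $G$ forces at least three of the cross edges to be present and prevents degenerate configurations (e.g.\ all four cross edges at a single vertex), this is exactly what rules out the problematic arrangements.

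Once every one of the $k$ additions strictly increases the rank, we conclude $\rank R(G,p) = \rank R(H_0,p) + k$; combined with the $3$-independence of $H_0$, this says all rows of $R(G,p)$ are linearly independent at a generic $p$, which is the definition of $G$ being $3$-independent.

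I expect the main obstacle to be the case analysis in the middle step: making sure that, no matter the order in which we add the cross edges and no matter how their endpoints are distributed over $G_1$ and $G_2$, one can always find a valid ``$\le 2$-vertex'' separation placing the current edge's endpoints on opposite sides. Choosing a good order in which to add the $e_j$ (for instance, grouping edges that share an endpoint, or processing edges incident to high-cross-degree vertices last) should tame this, and the hypothesis $k \le 4$ together with $3$-connectivity keeps the number of configurations small enough to enumerate.
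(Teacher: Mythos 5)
Your overall strategy --- add the cross edges one at a time and certify each rank increase with Lemma~\ref{lem:rankplus1}(b) --- cannot be completed, and the configuration that defeats it is the opposite of the one you flag as ``worst''. The hard case is when the cross edges form a \emph{matching}. Suppose $F=\{x_1w_1,x_2w_2,x_3w_3,x_4w_4\}$ with all eight endpoints distinct, $x_i\in V(G_1)$, $w_i\in V(G_2)$. Any decomposition $H_3=A\cup B$ with $x_4\in V(A)\setminus V(B)$ and $w_4\in V(B)\setminus V(A)$ forces $V(A)\cap V(B)$ to be a vertex cut of $H_3$ separating $x_4$ from $w_4$ (no edge of $H_3$ can join $V(A)\setminus V(B)$ to $V(B)\setminus V(A)$, since that edge would lie in $A$ or in $B$). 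But to disconnect $G_1$ from $G_2$ one must hit all three pairwise disjoint edges $x_1w_1,x_2w_2,x_3w_3$, which needs three vertices; concretely, for two copies of $K_4$ joined by a $4$-edge matching, $H_3$ is already $3$-connected and no admissible $\le 2$-vertex separation exists at all. Sharing endpoints is what \emph{creates} small separators; a matching destroys them. Note also that $3$-connectivity of $G$ does not exclude matchings --- on the contrary, it essentially forces at least three independent cross edges --- so your appeal to connectivity to ``rule out the problematic arrangements'' points the wrong way.

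The paper's proof splits exactly along this line. When $F$ is a matching it abandons the separation lemma entirely and uses an elementary body-bar-type argument in the spirit of Tay's theorem (up to six independent bars between two bodies in $\mathbb{R}^3$ are independent, and $|F|\le 4\le 6$). Only when two edges of $F$ share an endpoint does it use $3$-connectivity to locate a single edge $e\in F$ whose removal leaves a $2$-vertex separation $\{x,y\}$ with $x,y$ on one side and incident to the remaining edges of $F$, and then apply Lemma~\ref{lem:0ext} together with Lemma~\ref{lem:rankplus1}(b) to reinstate $e$. To repair your proposal you would need to import the body-bar case (or a substitute, e.g.\ extending $G_1,G_2$ to minimally $3$-rigid graphs on the same vertex sets and counting) to dispose of the matching configuration; the shared-endpoint configurations are the ones your method actually handles.
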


\begin{proof}
Let $F$ be the set of at most 4 edges between $G_1$ and $G_2$. Suppose that $F$ is an independent (in the graph theoretical sense) set in $G$. Then the result follows from an elementary body-bar type argument (see, for example, \cite{TAY198495}).
Since $G$ is 3-connected in each remaining possibility there exists an edge $e\in F$ such that $G-e$ has a 2-vertex-separation $\{x,y\}$ where $x,y\in G_1$ (by relabelling $G_1,G_2$ if necessary) and $x,y$ are incident to edges of $F$. Since $G_1$ and $G_2\cup \{x,y\}$ are 3-independent (by Lemma \ref{lem:0ext}) we may now use Lemma \ref{lem:rankplus1}(b) to see that $G$ is 3-independent.
\end{proof}

\begin{proof}[Proof of Theorem \ref{thm:atmost24}]
    Choose any $G=(V,E)$ with $|E| \leq 24$.
    If $|V| \leq 9$ then $\mlt (G) =  \gcr (G)$ by Theorem \ref{THM:SMALL},
    so we may assume that $|V| \geq 10$.
    Similarly, if $\gcr (G) \leq 4$ then $\mlt (G) =  \gcr (G)$ by Theorem \ref{thm: equality of gcr and mlt},
    so we may suppose that $\gcr (G) \geq 5$.
    Fix $d = \gcr(G) - 2 \geq 3$.
    Since $G$ is $d$-dependent it contains a $d$-circuit $H$ and since $\gcr(G)=d+2$ we have $\gcr(H)=d+2$. Now $\mlt (H)\leq \mlt (G)\leq \gcr (G)=d+2$ (by Theorem \ref{thm: uhler gcr mlt}).
    Hence it will suffice for us to prove that if $G$ is a $d$-circuit then $\mlt(G) = \gcr(G)$.
    
    As $G$ is a $d$-circuit it is easy to deduce from Lemma \ref{lem:0ext} that $\delta(G) \geq d+1$.
    Since $|V|\geq 10$, the handshaking lemma implies that 
    \begin{eqnarray}\label{eqn:deg}
    24 \geq |E| \geq \frac{d+1}{2}|V| \geq 5d + 5.\end{eqnarray}
    Hence $d = 3$ and now Equation (\ref{eqn:deg}) implies $24\geq 2|V|$ with equality if and only if $G$ is 4-regular.
    If $\delta (G) \geq 5$ then $|E| \geq \frac{5}{2}|V|$ and, since $|V| \geq 10$, this contradicts the hypothesis that $|E|\leq 24$.
    Hence $\delta(G) = 4$.
    If $\Delta(G) = |V| -1$ then $G$ has a cone vertex $u$, and $\gcr(G - u) = \gcr(G) - 1 = 4$ (since coning takes a $d$-independent graph to a $(d+1)$-independent graph \cite{W83}).
    By Theorem \ref{thm: equality of gcr and mlt}, it follows that $\mlt(G-u) = \gcr(G-u) = 4$.
    By Lemma \ref{lem:cone}, we have 
    $$\mlt(G) = \mlt(G-u) + 1 = \gcr(G-u) + 1 = \gcr(G).$$
    Hence we may suppose that $\Delta (G) \leq |V| - 2$.
    If $\Delta(G) \leq 5$ then $\mlt(G) = \gcr(G)$ by Theorem \ref{thm:degbounded},
    hence we may suppose that $\Delta(G) \geq 6$ and so $G$ is not 4-regular. Thus
    $|V| \in \{10,11\}$.
    
    If $G$ is not 3-connected (as $d$-circuits must be 2-connected) then it is the 2-sum of two 3-circuits $G_1,G_2$, each with at most 7 vertices. 
    By Theorem \ref{THM:SMALL} and Corollary \ref{cor:gr+cir},
    $\mlt(G_1) = \mlt(G_2) = 5$.
    Hence by Theorem \ref{thm: main mlt stress} and Lemma \ref{lem: psd clique sum},
    $\mlt(G) \geq 5$.
    Since $\gcr(G) = 5$,
    $\mlt(G) = 5$ by Theorem \ref{thm: equality of gcr and mlt}.
    So we may assume that $G$ is 3-connected.
    
    \textbf{Case 1: $|V|=11$.}
    For $X \subset V$ with $|X|\geq 2$,
    define the value $f(X) := \frac{1}{2}(5|X| - 7)$.
    For $X \subset V$ with $2 \leq |X| \leq 4$ we have 
    $i_G(X) \leq \binom{|X|}{2} \leq f(X)$,
    and $i_G(V) \leq 24 = f(V)$.
    Since $G$ is a 3-circuit, it does not contain a copy of $K_5$,
    hence $i_G(X) \leq 9 \leq f(X)$ whenever $|X|=5$.
    When removing $k\leq 4$ vertices,
    we must remove at least $\sum_{i=1}^k 4 - (i-1)$ edges (the minimum is achieved by removing a degree 4 vertex and then removing all but one of its neighbours),
    hence $i_G(X) \leq f(X)$ if $|X| \geq 7$.
    If $|X| = 6$,
    then $i_G(X)\leq 3|X|-6=12$ 
    and
    $f(X)=\frac{1}{2}(5|X|-7)=\frac{23}{2}$. Hence $i_G(X)>f(X)$ if and only if $G[X]$ is minimally 3-rigid.
    Suppose that this is the case.
    As $G$ is a 3-circuit,
    $G[V \setminus X]$ is 3-independent.
    Let $t$ denote the number of edges between $X$ and $V\setminus X$. Then, since $\delta(G)\geq 4$ and $i_G(V \setminus X) \leq 12 -t$, we have
    $$ 20=4|V\setminus X| \leq \sum_{v\in V\setminus X} d_G(v)=2i_G(V\setminus X)+t\leq 2(12-t)+t=24-t,$$
    and hence $t\leq 4$. Now $G$ is 3-independent by Lemma \ref{lem:4between}, a contradiction. 
    Hence $i_G(X) \leq f(X)$ if $|X| = 6$ and so $i_G(X) \leq f(X)$ for all $X\subset V$ with $|X|\geq 2$. Thus
    $G$ is 3-independent by Theorem \ref{thm:jj3sparse},
    a contradiction.
    
    \textbf{Case 2: $|V|=10$.}
    As $\delta(G) =4$ and $\Delta (G) \geq 6$,
    $|E| \geq 21$.
    If $|E|=21$ then $i_G(X) \leq f(X)$ for all $X \subset V$. This follows similarly to the above. In particular it is clear for all $X\subset V$ with $2\leq |X|\leq 4$, it holds for $|X|=5$ since $K_5$ is a 3-circuit, and for all $6\leq |X|\leq 10$ it follows from repeated removal of minimum degree vertices.
    Hence $|E| \in \{22,23,24\}$.
    Since $|E| \leq 24 < 3|V| - 5$,
    $G$ is 3-flexible.
    Furthermore,
    since every proper subgraph of $G$ is 3-independent,
    $G$ is $(3,6)$-sparse.
     Let $u$ be a degree 4 vertex.
    
    \begin{claim}
         There exists a pair $x,y\in N(u)$ such that $H:= G-u +xy$ is $(3,6)$-sparse.
    \end{claim}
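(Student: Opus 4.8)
The plan is to argue by contradiction, so suppose that no non-edge of $G[N(u)]$ has the desired property. First note $G[N(u)]$ does have a non-edge: otherwise $\{u\}\cup N(u)$ would induce a $K_5$ inside the $3$-circuit $G$, which is impossible since $|V|>5$. Since $G-u$ is a proper subgraph of the $3$-circuit $G$ it is $3$-independent, hence $(3,6)$-sparse; so for each non-edge $xy$ of $G[N(u)]$ the failure of $(3,6)$-sparsity of $G-u+xy$ is caused by a \emph{witness}: a set $X\subseteq V\setminus\{u\}$ with $x,y\in X$, $xy\notin E$ and $i_G(X)=3|X|-6$. As $xy$ is a non-edge, every witness satisfies $|X|\ge 5$.

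The key point (call it \emph{amplification}) is that no $3$-critical set $Z\subseteq V\setminus\{u\}$ can contain all of $N(u)$: if $N(u)\subseteq Z$ then every edge at $u$ has its other end in $Z$, so $i_G(Z\cup\{u\})=i_G(Z)+4=3|Z\cup\{u\}|-5$, which contradicts $(3,6)$-sparsity when $|Z\cup\{u\}|\le 9$ and forces $|E|=3|V|-5=25>24$ when $Z\cup\{u\}=V$. The same bookkeeping applied to a witness $X$, using $i_G(X\cup\{u\})=i_G(X)+|X\cap N(u)|$, gives $|X\cap N(u)|\le 3$; and counting the $|E|+2-3|X|$ edges of $G-u$ with an end outside $X$ against the $G-u$-degrees of the (at most two) vertices outside $X$ — each at least $\delta(G)-1=3$ — forces $|X|\le 7$, with $|X|=7$ possible only if $|E|=24$ and the two vertices of $V\setminus(X\cup\{u\})$ are mutually adjacent degree-$4$ neighbours of $u$. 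So it suffices to build, from the witnesses, one $3$-critical subset of $G-u$ containing $N(u)$.

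To do this I would first observe that a vertex $v\in N(u)$ adjacent to all of $N(u)\setminus\{v\}$ can be \emph{absorbed}: if $Z\subseteq V\setminus\{u\}$ is $3$-critical with $N(u)\setminus\{v\}\subseteq Z$, then $v$ has at least three neighbours in $Z$, so by $(3,6)$-sparsity $Z\cup\{v\}$ is again $3$-critical and now contains $N(u)$ — contradicting amplification. Secondly, two $3$-critical sets whose intersection has at least three vertices, or exactly two non-adjacent vertices, have $3$-critical union, and more generally the union stays $3$-critical in all the equality configurations of Remark~\ref{rem:count}. So I would run through the (few) isomorphism types of $G[N(u)]$, noting it has at most $5$ edges on $4$ vertices, and in each type choose non-edges of $G[N(u)]$ and an order in which to merge their witnesses so that the running union always meets the next witness in enough vertices; where the overlaps are forced to be small, the bound $|X|\le 7$ together with $3$-connectedness of $G$ closes the case. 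This produces a $3$-critical subset of $G-u$ containing every endpoint of a non-edge of $G[N(u)]$, after which absorption handles any remaining vertex of $N(u)$.

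I expect the main obstacle to be the densest type, $G[N(u)]\cong K_4-e$, where $e=v_3v_4$ is the unique non-edge: now there is a single witness $X$ and nothing to merge it with. Here I would use the amplification bound repeatedly to force $v_1,v_2\notin X$ and then $N(v_i)\cap X=\{v_3,v_4\}$ for $i=1,2$; it follows that $X\cup\{v_1,v_2,u\}$ is $3$-critical. If $X\cup\{v_1,v_2,u\}\ne V$ then $G[X\cup\{v_1,v_2,u\}]$ is $3$-independent with exactly $3n-6$ edges, hence $3$-rigid, yet $\{v_3,v_4\}$ is a $2$-separator of it (the vertices $u,v_1,v_2$ form a component on deleting $v_3,v_4$) with $v_3v_4\notin E$ — impossible, since a $3$-rigid graph on at least five vertices is $3$-connected. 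If $X\cup\{v_1,v_2,u\}=V$ then $|X|=7$, and the structural conclusion of the count above forces $\deg_G(v_1)=\deg_G(v_2)=4$, so $\{v_3,v_4\}$ separates $\{u,v_1,v_2\}$ from the rest of $G$, contradicting $3$-connectedness of $G$. (Note that $\Delta(G)\ge 6$ is not needed for this claim.)
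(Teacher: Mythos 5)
Your framework is essentially the paper's: your ``amplification'' is exactly the paper's first step (no $3$-critical subset of $V\setminus\{u\}$ may contain $N(u)$), your witnesses are the $3$-critical sets supplied by \cite[Lemma 3.1]{JJ}, the merging criteria come from Remark~\ref{rem:count}, and your treatment of the type $G[N(u)]\cong K_4-e$ is correct and corresponds closely to the paper's final subcase (there too one shows $X\cup\{u,w,z\}$ is $(3,6)$-tight, kills $|X|\in\{5,6\}$ by a degree count and $|X|=7$ by the $2$-separator $\{x,y\}$). The bounds $|X|\ge 5$, $|X|\le 7$ and the absorption observation all check out.

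The genuine gap is the part you compress into one sentence: the types where $G[N(u)]$ has two or more non-edges, which is where almost all of the paper's proof lives, and which ``the bound $|X|\le 7$ together with $3$-connectedness'' does not close. Your merging strategy fails exactly when two witnesses $X,W$ meet in a single vertex, or in two adjacent vertices with $d(X,W)=0$, and these configurations cannot be excluded by counting and connectivity alone. Concretely, take witnesses $X\ni x,y$ and $W\ni y,w$ with $X\cap W=\{y\}$: then $|X|=|W|=5$, $X\cup W=V\setminus\{u\}$, and $i(X\cup W)=|E|-4\le 20<21$, so the union is \emph{never} $3$-critical; $3$-connectedness rules out only $d(X,W)=0$ (via the would-be separator $\{y,u\}$), while for $d(X,W)\in\{1,2\}$ (i.e.\ $|E|\in\{23,24\}$) the paper must argue that the relevant tight pieces are minimally $3$-rigid and then invoke Lemma~\ref{lem:rankplus1} to contradict either the $3$-dependence or the $3$-flexibility of $G$. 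Likewise, when $X\cap W$ spans an edge and $d(X,W)=0$, the shared pair separates $G-uw$ but not $G$, so $3$-connectedness of $G$ is silent; the paper again needs Lemma~\ref{lem:rankplus1}(b) together with $3$-flexibility, and in one such subcase it does not reach a contradiction at all but instead proves the claim by certifying a \emph{different} good pair (showing $G-u+xw$ is $3$-independent via Lemma~\ref{lem:rankplus1}(b) and Lemma~\ref{lem:0ext}) --- an outcome your all-pairs-bad scheme must also handle but for which you have provided no tool. So the rigidity-rank and flexibility input is missing from your plan, and your guess about where the difficulty sits is inverted: $K_4-e$ is the easy case, and the unexecuted ``merge the witnesses in a suitable order'' step is the actual heart of the proof.
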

    
    \begin{proof}
  Let $N(u)=\{x,y,z,w\}$. Since $G$ does not contain a subgraph isomorphic to $K_5$, without loss of generality we may suppose that $xy\notin E$. Now, by \cite[Lemma 3.1]{JJ}, $G-v+xy$ is not $(3,6)$-sparse if and only if there exists a 3-critical set $X\subset V-u$ with $x,y\in X$. Suppose $X$ is the maximal such set with respect to inclusion. If $N(v)\subset X$ then we contradict the $(3,6)$-sparsity of $G$ so without loss of generality we may assume $w\notin X$. 
  
  Consider the pair $\{y,w\}\subset N(v)$. If there exists a 3-critical set $W\subset V-u$ with $y,w\in W$ then by Remark \ref{rem:count} either $i(X\cap W)\leq 3|X\cap W|-6$, or $|X\cap W|=1$, or $|X\cap W|=2$ and $G[X\cap W|\cong K_2$. In the first case $i(X\cup W)=3|X\cup W|-6$ contradicting the maximality of $X$. In the second case we have $V=X\cup W\cup \{u\}$ so we may suppose $z\in X$.
  We now note that both $G[X \cup \{u\}]$ and $G[W]$ are $(3,6)$-tight, and hence minimally 3-rigid as $G$ is a 3-circuit.
  Hence the graph $G'$ formed from gluing $G[X \cup \{u\}]$ and $G[W \cup \{u\}]$ at the edge $uy$ is 3-independent with 1 degree of freedom by Lemma \ref{lem:rankplus1}(a).
  If $d(X,W) =0$,
  then $G=G'$, contradicting the hypothesis that $G$ is a 3-circuit.
  If $d(X,W) \geq 1$ then, by Lemma \ref{lem:rankplus1}(b),
  $G$ is 3-rigid,
  contradicting the fact that $G$ is 3-flexible. Hence we have $|X\cap W|=2$ and $G[X\cap W|\cong K_2$ (note that $x\notin W$). If $V=X\cup W\cup u$ then without loss of generality we may assume $z\in X$. Now, since $d(X,W)=0$, the 3-independent graph $G-uw$ has a 2-vertex-separation $X\cap W:=\{y,y'\}$ where $yy'\in E$. It follows from Lemma \ref{lem:rankplus1}(b) that $G$ is 3-rigid, a contradiction. So there exists a vertex in $V\setminus (X\cup W\cup u)$. Since $|E|\leq 24$ and $\delta(G)=4$, this vertex must be $z$ and $d(z)=4$. Note that $xw\notin E$ and $G-\{u,z\}$ is 3-independent, so $G-\{u,z\}+xw$ is 3-independent by Lemma \ref{lem:rankplus1}(b). Hence $G-u+xw$ is 3-independent by Lemma \ref{lem:0ext} giving the claim. Hence we may assume that $yw\in E$ and by symmetry that $xw\in E$. 
  
  Suppose that $z\in X$. If $zw\in E$ then we contradict the maximality of $X$, so we may suppose $zw\notin E$. Then there exists a 3-critical subset $Y\subset V-u$ with $w,z\in Y$. By the maximality of $X$ we have $|X\cap Y| \leq 2$. If $|X\cap Y|=1$ then, since $|E|\leq 24$, $\{w,z\}$ is a 2-vertex-separation of $G$ contradicting the fact that $G$ is 3-connected. So $|X\cap Y|= 2$ but then we contradict the hypothesis that $|E|\leq 24$.
  
  Finally, suppose that $z\notin X$. Then as above we deduce that $xz,yz\in E$. Suppose that $wz\notin E$ and there exists a 3-critical set $Z\subset V-u$ with $w,z\in Z$. If $x$ or $y$ is contained in $Z$ then we may relabel and apply the above argument to obtain a contradiction. Hence we may assume $x,y\notin Z$. Since $|V|=10$ and $X$ is maximal we have $1\leq |X\cap Z|\leq 2$. Now Remark \ref{rem:count} implies that $d(X,Z)\leq 3$ but we have already shown that $xw,xz,yw,yz\in E$. This is a contradiction and so we must have $wz\in E$. Now $5\leq |X|\leq 7$. If $|X|\in \{5,6\}$ then since $G[X\cup \{u,w,z\}]$ is $(3,6)$-tight and $|E|\leq 24$ there must exist a vertex in $V\setminus (X\cup \{u,w,z\})$ of degree at most 3, contradicting the fact that $\delta(G)=4$.
  Finally, if $|X|=7$ then, since $|E|\leq 24$, $\{x,y\}$ is a 2-vertex-separation of $G$. This contradiction completes the proof of the claim. 
    \end{proof}
    
    If $H$ is 3-independent then $G$ is also 3-independent as $G$ is formed from $H$ by a 1-extension,
    hence $H$ is 3-dependent.
    Thus there exists a set $X \subset V-u$ such that $H[X]$ is a 3-circuit.
    If $\{x,y\} \not\subset X$ then $H[X] \subset G$,
    contradicting that $G$ is a 3-circuit,
    hence $x,y\in X$.
    As $H[X]$ is a $(3,6)$-sparse 3-circuit with at most 9 vertices,
    $H[X]$ is the 2-sum of two 3-circuits $H_1,H_2$ where $|V(H_1)| = 5$ and $|V(H_2)| \in \{5,6\}$.
    By Theorem \ref{thm:tibor}
$H_1$ and $H_2$ are 3-rigid.
    By Lemma \ref{lem:1ext of 2sum},
    either $G$ is the 2-sum of two 3-circuits $G_1,G_2$ (and hence not 3-connected), or $G$ is 3-rigid.
    However, both cases contradict our assumptions.
\end{proof}

\section{Many edges and bounded degrees}
\label{sec:manybound}

In this short section we will demonstrate that if $G$ is sufficiently close to being complete, or has sufficiently small vertex degrees then $\mlt(G) = \gcr(G)$.

\begin{proof}[Proof of Theorem \ref{t:manyedges}]
 Let $G$ be a graph on $n$ vertices and fix $d = \gcr(G)-2$. If $n\leq 4$ the theorem is trivial (for example, from Theorem \ref{thm: equality of gcr and mlt}) so suppose $n\geq 5$.
The graph $G$ has at least $\binom{n}{2} -5$ edges. We have
\begin{align*}
    \binom{n}{2} -5 > (n-4)n - \binom{n-3}{2}.
\end{align*}
Since any $(n-4)$-independent graph is $(n-4,\binom{n-3}{2})$-sparse, $G$ is $(n-4)$-dependent and therefore $G$ contains an $(n-4)$-circuit and $d \geq n-4$. It follows from the definition of the GCR that $G$ contains a $d$-circuit on at most $d+4$ vertices.
Lemmas~\ref{lem: mlt monotone} and~\ref{lemma: d circuit d+4 vertices} and Theorem \ref{thm: uhler gcr mlt} now imply $\mlt(G) = d+2$, completing the proof.
\end{proof}

The next proof requires the following result of Jackson and Jord\'{a}n.

\begin{thm}[\cite{JJ}]\label{thm:jacksonjordan}
    Let $G$ be a connected graph with minimum degree at most $d+1$ and maximum degree at most $d+2$.
    Then $G$ is $d$-independent if and only if $i(X) \leq d |X| - \binom{d+1}{2}$ for any vertex set $X \subset V$ with $|X| \geq d+2$.
\end{thm}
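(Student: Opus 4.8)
The plan is to show that, under the stated hypotheses, $d$-independence coincides with $(d,\binom{d+1}{2})$-sparsity. First I would note that the displayed count, imposed only for $|X|\ge d+2$, is in fact equivalent to full $(d,\binom{d+1}{2})$-sparsity: for $|X|\in\{d,d+1\}$ a simple graph automatically satisfies $i(X)\le d|X|-\binom{d+1}{2}$ (with equality exactly when $G[X]$ is complete, since $d\cdot d-\binom{d+1}{2}=\binom{d}{2}$ and $d(d+1)-\binom{d+1}{2}=\binom{d+1}{2}$), so the only genuine constraints occur for $|X|\ge d+2$. The necessity direction ($G$ $d$-independent $\Rightarrow$ the count) is the classical Maxwell argument and needs neither the degree bounds nor connectivity: every induced subgraph $G[X]$ of a $d$-independent graph is $d$-independent, and for $|X|\ge d+1$ the rank of $R(G[X],p)$ is at most $d|X|-\binom{d+1}{2}$ (the infinitesimal isometries of a generic spanning configuration span a $\binom{d+1}{2}$-dimensional kernel), so independence of the $i(X)$ rows forces $i(X)\le d|X|-\binom{d+1}{2}$.

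For sufficiency I would argue by induction on $|V|$ that a connected $(d,\binom{d+1}{2})$-sparse graph with $\delta(G)\le d+1$ and $\Delta(G)\le d+2$ is $d$-independent, with base cases $|V|\le d+1$ (subgraphs of $K_{d+1}$, always independent). Fix a vertex $v$ with $\deg(v)\le d+1$. If $\deg(v)\le d$, then $G$ arises from $G-v$ by a $0$-extension (padding to degree $d$ with extra edges if needed), $G-v$ is sparse as a subgraph, and $G-v$ is $d$-independent by induction, so $G$ is $d$-independent by Lemma~\ref{lem:0ext}. The delicate case is $\deg(v)=d+1$, where I want a \emph{legal $1$-reduction}: a pair $x,y\in N_G(v)$ with $xy\notin E$ such that $G':=G-v+xy$ is again $(d,\binom{d+1}{2})$-sparse. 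Granting this, $G'$ has fewer vertices and is $d$-independent by induction, so $G$, being a $1$-extension of $G'$, is $d$-independent by Lemma~\ref{lem:0ext}. (Reductions may disconnect $G$ or raise its minimum degree; one treats components separately, observing that a component with $\delta\ge d+2$ is, by $\Delta\le d+2$, exactly $(d+2)$-regular, and then the sparsity inequality $\tfrac12(d+2)|V|\le d|V|-\binom{d+1}{2}$ bounds its size, leaving only finitely many base cases.)

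The heart of the matter is the existence of a legal $1$-reduction. Here $G':=G-v+xy$ fails to be sparse precisely when some \emph{critical} set $X$ of $G-v$ (one with $i(X)=d|X|-\binom{d+1}{2}$) contains both $x$ and $y$; moreover at least one admissible pair exists, as otherwise $N_G(v)\cup\{v\}$ would induce $K_{d+2}$ and violate sparsity. So no legal reduction means every non-adjacent pair of neighbours is trapped inside a critical set of $G-v$. The clean finish is to promote this to a single critical set $X\subseteq V\setminus\{v\}$ with $N_G(v)\subseteq X$: writing $g(X):=d|X|-i(X)$, submodularity of $g$ shows that two critical sets whose intersection has at least $d$ vertices have critical union and intersection, so the blocking sets merge into one critical $X$ covering $N_G(v)$. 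Then all $d+1$ edges at $v$ enter $X$, and
\[
    i_G(X\cup\{v\}) = i_G(X)+(d+1) = d|X|-\binom{d+1}{2}+(d+1) > d(|X|+1)-\binom{d+1}{2},
\]
contradicting the sparsity of $G$.

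The main obstacle is exactly this merging step. In dimension $d\ge 3$, sparsity alone does \emph{not} guarantee a legal $1$-reduction (this is why Laman-type theorems fail there): the critical sets blocking distinct neighbour pairs may pairwise meet in fewer than $d$ vertices and so refuse to combine, which is the ``double-banana'' phenomenon. This is precisely where $\Delta(G)\le d+2$ is indispensable. A degree-bounded critical set $X$ sends at most $\sum_{x\in X}\deg_G(x)-2i_G(X)\le(2-d)|X|+d(d+1)$ edges to its complement (including to $v$), and this sharp control on the boundary edges around $v$ is what forces the blocking sets to overlap in at least $d$ vertices, so that the submodular merging applies. I expect the bulk of the real work, and the only place the hypotheses genuinely bite, to be a careful case analysis of how the blocking critical sets around $v$ can intersect under this degree constraint.
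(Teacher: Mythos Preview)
The paper does not prove this statement; it is quoted from Jackson and Jord\'an \cite{JJ} and used as a black box in the proof of Theorem~\ref{thm:degbounded}. So there is no proof in the paper to compare yours against, and what you have written is a sketch towards an independent proof of a cited result.

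Your overall strategy---necessity via the standard Maxwell count, sufficiency by inductively building the graph via $0$- and $1$-extensions and showing a legal $1$-reduction always exists---is indeed the shape of the Jackson--Jord\'an argument. You are also right that the degree bound $\Delta\le d+2$ is exactly what rescues the merging-of-critical-sets step that fails for general sparse graphs in dimension $d\ge 3$, and your submodularity observation about $g(X)=d|X|-i(X)$ is the correct tool. However, two points deserve flagging. First, your parenthetical handling of components with $\delta\ge d+2$ is mistaken: the inequality $\tfrac12(d+2)|V|\le d|V|-\binom{d+1}{2}$ rearranges to $|V|\ge d(d+1)/(d-2)$, a \emph{lower} bound on $|V|$, not an upper bound, so it does not leave finitely many base cases (e.g.\ for $d=3$ the icosahedron is a $(3,6)$-tight $5$-regular graph on $12$ vertices, and one can produce arbitrarily large such examples). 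The actual argument must instead show that a suitable choice of reduction vertex and added edge avoids creating such a component, or handle that component by a separate structural argument. Second, and more importantly, you explicitly defer the core difficulty (``I expect the bulk of the real work\ldots to be a careful case analysis''): the precise combinatorial argument that the blocking critical sets around a degree-$(d+1)$ vertex must overlap in at least $d$ vertices, under the hypothesis $\Delta\le d+2$, is the entire content of the theorem and is not supplied here. What you have is a correct outline with the main lemma left unproved and one side claim in error.
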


\begin{proof}[Proof of Theorem \ref{thm:degbounded}]
    The degree hypothesis implies that any set $X \subset V$ satisfies $i(X) \leq \frac{1}{2}(5|X| - 1)$.
    Hence $i(X) \leq 4|X| -10$ for all $|X|\geq 6$ and $i(X) \leq 3|X| -6$ for all $|X| \geq 10$\footnote{The cases when $|X|=6$ and respectively $|X|=10$ follow since $i(X)$ is an integer.}. Applying these two observations with Theorem \ref{thm:jacksonjordan} implies that we have $\gcr(G) \leq 5$,
    with equality if and only if $G$ contains a 3-circuit on at most 9 vertices (since no 4-circuit exists on at most 5 vertices).
   If $\gcr(G)\leq 4$ then Theorem \ref{thm: equality of gcr and mlt} gives the result. Hence we may
    suppose $\gcr(G) =5$ and let $H$ be a 3-circuit contained in $G$ with $|V(H)| \leq 9$.
    The result follows from applying Theorem \ref{THM:SMALL} to $H$.
\end{proof}

\section{Completing the proof of Theorem~\ref{THM:SMALL}}
\label{app:b}

It remains to prove Lemma \ref{lem:3d9circcut}.
We first deal with the case when $G$ is 4-connected.
In what follows, we make repeated implicit use of the fact that every vertex in a $d$-circuit has degree at least $d+1$.

\begin{lemma}\label{lem:3d9gr}
    Let $G$ be a 4-connected 3-rigid 3-circuit on 9 vertices.
    Then $G$ has a generic realization in $\mathbb{R}^3$ with a PSD equilibrium stress.
\end{lemma}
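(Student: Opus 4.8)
The plan is to reduce the statement to an application of Theorem~\ref{thm: gen ur} by showing that a $4$-connected $3$-rigid $3$-circuit $G$ on $9$ vertices is in fact globally $3$-rigid. Once global $3$-rigidity is established, Theorem~\ref{thm: gen ur} (with $d=3$, $n=9 \ge d+2$) immediately produces a generic framework $(G,p)$ carrying a PSD equilibrium stress of rank $n-d-1 = 5$, which is more than enough for the conclusion. So the entire content of the lemma is the combinatorial claim: \emph{every $4$-connected $3$-rigid $3$-circuit on $9$ vertices is globally $3$-rigid.}

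To prove that claim I would use the Jackson--Jord\'an characterization of global $3$-rigidity: a $3$-rigid graph (with at least $5$ vertices) is globally $3$-rigid if and only if it is $4$-connected and redundantly $3$-rigid, i.e.\ $G-e$ is $3$-rigid for every edge $e$. Since $G$ is $4$-connected by hypothesis, the remaining task is to verify redundant $3$-rigidity. Here the $3$-circuit hypothesis is exactly what is needed: a $3$-circuit is a graph that is not $3$-independent but whose every proper subgraph is $3$-independent, so $G$ has precisely $3n - 6 + 1 = 3\cdot 9 - 5 = 22$ edges and a $1$-dimensional space of equilibrium stresses supported on \emph{all} of its edges (every edge lies in the unique circuit). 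Removing any edge $e$ yields a $3$-independent graph on $22-1 = 21 = 3\cdot 9 - 6$ edges, which is therefore a $(3,6)$-tight, hence minimally $3$-rigid, graph. Thus $G-e$ is $3$-rigid for every edge $e$, so $G$ is redundantly $3$-rigid. Combined with $4$-connectivity, the Jackson--Jord\'an theorem gives that $G$ is globally $3$-rigid, and we are done.

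The one subtlety to pin down carefully is the claim that in a $3$-circuit every edge is in the support of the (essentially unique) equilibrium stress — equivalently that deleting any single edge from a $3$-circuit always yields an independent graph. This follows from the matroidal definition of a circuit in the generic $3$-rigidity matroid: a circuit is a minimal dependent set, so it is dependent, every proper subset is independent, and (by the circuit axioms / the fact that the cokernel of the rigidity matrix of a circuit is exactly one-dimensional and the supporting stress is nowhere zero on the circuit) removing any element makes it independent. I expect this to be the only place requiring a line of justification; everything else is bookkeeping with edge counts and invocations of Theorem~\ref{thm:tibor}-style results. The main obstacle, such as it is, is simply making sure the connectivity and redundancy hypotheses line up exactly with the version of the global rigidity characterization being cited; the rigidity-theoretic input (circuit structure forces redundancy) is robust.
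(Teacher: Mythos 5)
There is a genuine gap, and it sits exactly at the step you flagged as ``robust'': the implication ``$4$-connected and redundantly $3$-rigid $\Rightarrow$ globally $3$-rigid.'' The Jackson--Jord\'an characterization you invoke is a theorem about dimension $2$ (where $3$-connectivity plus redundant rigidity is equivalent to global rigidity). In dimension $3$ these are only Hendrickson's \emph{necessary} conditions, and Hendrickson's conjecture that they are sufficient is false for $d\ge 3$ --- the standard counterexample is $K_{5,5}$, which is $5$-connected and redundantly $3$-rigid but not globally $3$-rigid, and which is precisely the graph driving the Blekherman--Sinn examples discussed in this paper. The only version of the characterization available here is Theorem~\ref{thm:tibor}, which is restricted to graphs on at most $d+4=7$ vertices; it does not apply to a $9$-vertex graph. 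Your bookkeeping that a spanning $3$-circuit on $9$ vertices has $22$ edges and that deleting any edge leaves a $(3,6)$-tight basis (hence $G$ is redundantly $3$-rigid) is correct, but the final inference to global $3$-rigidity is unsupported, and the truth of that combinatorial claim for $n=9$ is exactly what cannot be taken for granted.

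The paper's own proof reflects this: rather than proving global rigidity, it performs $1$-reductions at a degree-$4$ vertex to land on a smaller $3$-circuit $H$ on $5$--$8$ vertices, handles each case by either Theorem~\ref{thm:tibor}/Theorem~\ref{thm: gen ur} (when the reduced graph is globally rigid) or by the clique-sum Lemma~\ref{lem: psd clique sum} (when it is $H_3$, which is \emph{not} globally $3$-rigid but still carries a PSD stress of rank $2$), and then lifts PSD stresses back up with Lemma~\ref{lem:1ext}. Note that this route only delivers a PSD stress of rank at least $n-d-2=4$ in some cases, not the rank $5$ that global rigidity would force --- a strong hint that the authors could not, and you cannot, shortcut the argument through global rigidity. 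To repair your proof you would either need to prove global $3$-rigidity for these specific $9$-vertex circuits by other means, or adopt a reduction strategy along the lines of the paper's.
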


\begin{proof}
    A counting argument shows that $G$ has a vertex $v_0$ of degree 4.
    Since $G$ is a 3-circuit,
    there exist distinct vertices $x,y$ adjacent to $v_0$ such that $xy \notin E$. Let $G'$ be the result of the 1-reduction at $v_0$ that adds $xy$. Then $G'$ contains a 3-circuit $H$.
    
    If $|V(H)|=8$ then $H = G'$.
    The connectivity of $H$ is at least $3$ (as otherwise $G$ would not be 4-connected),
    hence by \cite{grasegger2020flexible} $H$ is 3-rigid.
    The result now follows from Lemmas \ref{lem:1ext} and  \ref{lem:d+5circcut}.
    
    If $|V(H)|=7$ then $G'$ is formed from $H$ by a 0-extension that adds a vertex $v_1$.
    Since $G$ is a 4-connected 3-circuit,
    $v_0$ and $v_1$ must be adjacent in $G$,
    and $v_1 \notin \{x,y\}$.
    We now note that $G$ can be formed from $H$ by two 1-extensions;
    the first will remove the edge $xy$ and connect the vertex $v_0$ to $N_G(v_0) -v_1 + u$ for some vertex $u \in V(H)$,
    and the second will remove the edge $uv_0$ and attach $v_1$ to all its neighbours in $G$. 
    Since $H_3$ has a 3-dimensional generic realization with a PSD equilibrium stress of rank 2 and any globally 3-rigid graph has a PSD equilibrium stress of rank 3 (Theorem \ref{thm: gen ur}),
    the result now follows from Lemmas~\ref{lem:1ext} and~\ref{lemma: H_n} and Theorem~\ref{thm:tibor}.
    
    If $|V(H)|=6$ then $H$ is globally 3-rigid by Theorem~\ref{thm:tibor}.
    Since $|E(G')|= 19$ and $|E(H)|=13$,
    $G'$ has 6 edges not in $H$.
    Given $a,b$ are the two vertices in $G' -V(H)$ with $a$ having equal or higher degree than $b$ in $G'$,
    one of the two possibilities must hold: (i) both $a$ and $b$ have degree 3 in $G'$,
    or (ii) $a$ has degree 4 in $G'$, $b$ has degree 3 in $G'$, and there exists an edge between $a$ and $b$.
    In both cases we have that $v_0 b \in E$, and in (i) we have $v_0a \in E$ also.
    In case (i) we must have distinct vertices $s,t \in V \setminus \{a,b,v_0,x,y\}$ adjacent to $a$ and $b$ respectively as otherwise $G$ would not be 4-connected. Hence in case (i) we can obtain $G$ from $H$ by three 1-extensions;
    the first to add $v_0$ attached to $x,y,s,t$ and the next to split two of the edges $v_0 s$ and $v_0 t$ and add the vertices $a,b$.
    If case (ii) holds then $G$ can be formed from $H$ by a 0-extension to add $b$ adjacent to its neighbours plus a vertex $w$ in the neighbourhood of $a$ in $G$,
    a 1-extension at $w b$ to add $a$,
    and a 1-extension at $xy$ to add $v_0$.
    In either case,
    the result will hold by Lemma \ref{lem:1ext} and Theorem \ref{thm: gen ur}.
    
    Finally,
    suppose $|V(H)|=5$,
    i.e.~$G'$ has a 5-clique. 
    Let $a,b,c$ be the three vertices in $G' -V(H)$.
    We will show that there exists a 1-reduction of $G$ at $a,b$ or $c$ resulting in a graph that does not contain a 5-clique,
    hence reducing the problem to one of the previous cases.
    We first note that $v_0$ can be adjacent to at most two of $a,b,c$ as $x,y \in V(H)$.
    If any of $a,b,c$ are adjacent to four vertices in $H$ then $G$ must contain either $K_6-\{e,f\}$ ($e,f$ independent) or $K_5$ which contradicts that $G$ is a 3-circuit.
    The 4-connectivity of $G$ implies that each of $a,b,c$ has a neighbour in $H$. If $a$ has exactly 1 neighbour in $H$ then $a$ is adjacent to $v_0$ and has degree 4. By a quick case analysis we can see that there is a 1-reduction at $a$ (in $G$) creating a graph with no 5-clique. 
    Hence we may assume each of $a,b,c$ has either 2 or 3 neighbours in $H$.
    If all three have 3 neighbours in $H$ then $G$ would have a vertex of degree 3,
    hence we may assume $a$ has only two neighbours in $H$. If $a,b,c$ all have two neighbours in $H$ then we may assume that $av_0\notin E$ and hence $a$ has degree 4. As above, we can apply a 1-reduction at $a$ (in $G$) to create a graph with no 5-clique. Hence we may assume that $b$ has 3 neighbours in $H$. 
    If $c$ has 2 neighbours in $H$, then a quick case analysis shows that one of $a,b,c$ has degree 4 in $G$ and we again reduce that vertex instead of $v_0$. Lastly if $c$ has 3 neighbours in $H$, then $a$ certainly has degree 4 in $G$ (otherwise $G'$ would have too many edges) and we finish in the same manner. 
\end{proof}

We lastly deal with the case when $G$ is not 4-connected. It will be convenient to define a \emph{node} of $G$ to be a vertex of degree 4 and to use $N$ to denote the set of nodes of $G$.
We also define a \emph{deleted $k$-sum} of two graphs $G_1,G_2$ to be the graph obtained by gluing $G_1$ and $G_2$ along a common $k$-clique, then removing one edge from this common clique.

\begin{lemma}
    Let $G$ be a 3-rigid 3-circuit on 9 vertices with a separating set $C= \{c_1,c_2,c_3\}$.
    Then $G$ has a generic realization in $\mathbb{R}^3$ with a PSD equilibrium stress.
\end{lemma}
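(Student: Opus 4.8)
The plan is to use the separating set $C$ to exhibit $G$ as a clique-sum, possibly with a few edges of the shared clique deleted, of two graphs on at most seven vertices whose rigidity is controlled by Theorems~\ref{thm:tibor} and~\ref{thm: gen ur} and Lemma~\ref{lemma: H_n}, and then to transport equilibrium stresses across the clique-sum using Lemma~\ref{lem: psd clique sum}. Since every vertex of the $3$-circuit $G$ has degree at least $4$ while $|C|=3$, every vertex of $G-C$ has a neighbour in $G-C$, so each component of $G-C$ has at least two vertices; as $|V(G)\setminus C|=6$, the components of $G-C$ have vertex sets of sizes $\{2,4\}$, $\{3,3\}$ or $\{2,2,2\}$. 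Let $A$ be the vertex set of a smallest component, put $G_1:=G[A\cup C]$, $G_2:=G[(V(G)\setminus A)\cup C]$, and let $G_i^+:=G_i+K(C)$ be obtained by completing the triangle on $C$, so that $G=(G_1^+\cup G_2^+)-F$ where $F$ is the set of non-edges of $G$ lying inside $C$, $G_1^+\cap G_2^+=K(C)$, $|V(G_1^+)|\le 6$, $|V(G_2^+)|\le 7$, and $f:=|F|\in\{0,1,2,3\}$. Note $f\neq 0$: if $G[C]=K_3$ then $G=G_1\cup G_2$ is a union of two $3$-independent graphs (proper subgraphs of the circuit $G$) whose intersection $K(C)$ is $3$-rigid, so $G$ would be $3$-independent by Lemma~\ref{lem:rankplus1}(a), contradicting that $G$ is a $3$-circuit.

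Suppose first that $f=1$. An edge count---using that $G_1,G_2$ are $3$-independent, hence $(3,6)$-sparse, together with $\delta(G)\ge 4$ and $|V(G_1)|+|V(G_2)|=12$---forces $|E(G_1)|+|E(G_2)|=24$ and hence $G_1,G_2$ both $(3,6)$-tight, so both minimally $3$-rigid (by Theorem~\ref{thm:tibor}, or directly when the piece has five vertices). Thus each $G_i^+=G_i+F$ has a one-dimensional stress space, supported on the unique $3$-circuit $\Gamma_i\subseteq G_i^+$, which contains the edge of $F$. Since $\Gamma_i$ has at most seven vertices, Lemma~\ref{lemma: H_n} gives that $\Gamma_i$ is globally $3$-rigid or isomorphic to $H_3$; in either case $\Gamma_i$, and hence $G_i^+$, has a generic realization carrying a non-zero PSD equilibrium stress whose support contains the edge of $F$ (by Theorem~\ref{thm: gen ur} in the first case, and by the rank-$2$ PSD stress of $H_3$ used in the proof of Lemma~\ref{lem:3d9gr} in the second, extended by zero to $G_i^+$). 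Applying Lemma~\ref{lem: psd clique sum} with $k=3$ to the $3$-sum $G_1^+\cup G_2^+$ and the edge of $F$ now produces a generic realization of $G=(G_1^+\cup G_2^+)-F$ with a non-zero PSD equilibrium stress, as wanted.

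The case $f\ge 2$---which in particular contains every $\{2,2,2\}$ configuration---is the main obstacle, because Lemma~\ref{lem: psd clique sum} cannot simply be iterated: after deleting one edge of $K(C)$ the intersection of the two pieces is no longer a complete graph. The plan here is to reduce to the case $f\le 1$ by $1$-reductions, as in the proof of Lemma~\ref{lem:3d9gr}. A further edge count (again via $\delta(G)\ge 4$ and the sparsity of proper subgraphs of $G$) shows that some component of $G-C$ contains a vertex $v$ of degree $4$ with two neighbours $x,y\in C$ satisfying $xy\notin E(G)$; performing the $1$-reduction at $v$ that adds $xy$ yields an eight-vertex graph with one more edge inside $C$, from which one extracts a $3$-circuit and applies induction---the base cases being Lemmas~\ref{lemma: d circuit d+4 vertices} and~\ref{lem:d+5circcut}, the $f=1$ analysis above, and, when the extracted circuit is a $2$-sum, extending one summand's PSD stress by zero---before lifting the resulting PSD equilibrium stress back to $G$ through the $1$-extension using Lemma~\ref{lem:1ext}. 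I expect the delicate points to be checking that the $1$-reduced graph has the claimed structure (in particular that it does not acquire a $2$-separator, so that the right inductive case applies) and that the PSD equilibrium stress obtained at the bottom of the recursion has the rank required to feed Lemma~\ref{lem:1ext} at each stage; the highly symmetric $\{2,2,2\}$ case (three copies of $K_5$ glued along a triangle missing two of its edges) is probably cleanest to treat as an explicit separate sub-case.
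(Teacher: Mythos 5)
Your setup and your $f\le 1$ analysis are sound and essentially match the paper's easiest subcase: there the situation $|E^c[C]|=1$ is dispatched as a deleted $3$-sum via Lemma~\ref{lem: psd clique sum}, exactly as you do (your tightness count replacing the citation to \cite{grasegger2020flexible}, and the full support of a generic stress of a circuit giving the required nonvanishing on the deleted edge). The problem is that everything with $f\ge 2$ --- which is where almost all of the paper's proof lives --- is only a plan in your write-up, and the plan's central mechanism does not work as stated. Take the $\{2,2,2\}$ configuration, which the paper identifies as the triple of $K_5$'s glued along $C$ with two edges of $C$ removed (Figure~\ref{fig:3k5s}(a)). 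Every degree-$4$ vertex there lies outside $C$, and any $1$-reduction at such a vertex adding a missing $C$-edge produces an $8$-vertex graph that is $H_3$ plus one degree-$3$ vertex. Its stress space is one-dimensional and equals that of $H_3$, whose generic stress matrix has rank at most $2$ (rank $3$ would force global $3$-rigidity, impossible since $H_3$ is not $4$-connected). But to lift back to the $9$-vertex graph through Lemma~\ref{lem:1ext} you need a PSD stress of the $8$-vertex base graph of rank $|V|-d-2=3$; extending the $H_3$ stress by zero gives only rank $2$, so the lemma cannot be applied and no choice of $1$-reduction avoids this. This is precisely why the paper treats that graph by a bespoke construction, gluing three affinely adjusted stressed $K_5$ frameworks so the stress coefficients on the two deleted $C$-edges cancel, rather than by reduction.

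The same pattern repeats elsewhere in the $f\ge 2$ case: the paper's proof is a genuine case analysis (split by $|A|=|B|=3$ versus $|A|=2,|B|=4$ and by $|E^c[C]|$) in which most graphs are reduced, by carefully chosen $1$-reductions or ``$A$-moves,'' to $H_3$, to the circuit in Figure~\ref{fig:3k5s}(b), or to double-banana/deleted-sum decompositions handled by Lemma~\ref{lem: psd clique sum}, but exceptional graphs appear (Figure~\ref{fig:amove}) for which every reduction misbehaves and one of which is certified by an explicit numerically realized rank-$4$ PSD stress. Your sketch neither identifies these exceptional graphs nor verifies the two delicate points you yourself flag (existence of a reduction with the right structure, and the rank bookkeeping for Lemma~\ref{lem:1ext}), and at least one of them fails concretely as explained above. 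A small additional error: if the extracted circuit is $2$-separable it is a \emph{deleted} $2$-sum (e.g.\ the double banana), and then neither summand's stress extends by zero --- equilibrium fails at the two shared vertices --- so you must combine the two summands' stresses via Lemma~\ref{lem: psd clique sum}, not pad one with zeros. So the proposal correctly handles the easy subcase but leaves the substance of the lemma unproved.
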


\begin{proof}
Let $A,B \subset V$ be chosen such that $|A|\leq|B|$, $A \cup B \cup C = V$,
$A \cap B= A \cap C = B \cap C = \emptyset$,
and there exist no edge joining $A$ and $B$.
As $G$ is a 3-circuit on 9 vertices,
either $|A|=|B|=3$, or $|A|=2$ and $|B|=4$.
Note that $C$ cannot be a clique, as this would imply either $G[A \cup C]$ or $G[B \cup C]$ is a 3-circuit.

Suppose that $G[B]$ is disconnected. Then $|B|=4$ and so $|A|=2$.
	Since 3-circuits have minimum degree at least 4, the only 3-circuit that satisfies the above conditions is the graph described in Figure \ref{fig:3k5s}(a).
	This graph can be formed by the union of three copies of $K_5$ glued at three vertices $\{c_1,c_2,c_3\}$ with two edges $c_1c_2,c_1c_3$ removed.
	It can be shown that $G$ has a 3-dimensional generic realization with a PSD equilibrium stress\footnote{Since equilibrium stresses are invariant under affine transformations, we can find three generic frameworks $(K_5,p^1),(K_5,p^2),(K_5,p^3)$ with PSD equilibrium stresses $\omega^1,\omega^2,\omega^3$ respectively so that: (i) $p^1_{c_i}=p^2_{c_i}=p^3_{c_i}$ for each $i \in \{1,2,3\}$, (ii) $\omega^1_{c_1 c_2} + \omega^2_{c_1 c_2} + \omega^3_{c_1 c_2}=0$,
	(iii) $\omega^1_{c_1 c_3} + \omega^2_{c_1 c_3} + \omega^3_{c_1 c_3}=0$,
	and (iv) the framework $(G,p)$ formed by gluing all three frameworks at the vertices $c_1,c_2,c_3$ and deleting the edges $c_1c_2,c_1c_3$ is regular. The obtained framework will have a PSD equilibrium stress $\omega = \overline{\omega}^1 + \overline{\omega}^2 + \overline{\omega}^3$,
	where $\overline{\omega}^i$ is the extension of $\omega^i$ to the edges of $G + c_1c_2 + c_1 c_3$.}. Hence we may suppose both $G[A]$ and $G[B]$ are connected. 
 	
\begin{figure}[ht]
	\begin{center}
		\begin{tikzpicture}[scale=0.7]
			\node[vertex] (a1) at (0.866,-2.5) {};
			\node[vertex] (a2) at (-0.866,-2.5) {};
			
			\node[vertex] (b1) at (1.732,2) {};
			\node[vertex] (b2) at (2.6,0.5) {};
			
			\node[vertex] (c1) at (-1.732,2) {};
			\node[vertex] (c2) at (-2.6,0.5) {};
			
			\node[vertex] (m1) at (0,1) {};			
			\node[vertex] (m2) at (0.866,-0.5) {};
			\node[vertex] (m3) at (-0.866,-0.5) {};
	
			\draw[edge] (m1)edge(a1);
			\draw[edge] (m1)edge(a2);			
			\draw[edge] (m2)edge(a1);
			\draw[edge] (m2)edge(a2);			
			\draw[edge] (m3)edge(a1);
			\draw[edge] (m3)edge(a2);
			
			\draw[edge] (m1)edge(b1);
			\draw[edge] (m1)edge(b2);			
			\draw[edge] (m2)edge(b1);
			\draw[edge] (m2)edge(b2);			
			\draw[edge] (m3)edge(b1);
			\draw[edge] (m3)edge(b2);
	
			\draw[edge] (m1)edge(c1);
			\draw[edge] (m1)edge(c2);			
			\draw[edge] (m2)edge(c1);
			\draw[edge] (m2)edge(c2);		
			\draw[edge] (m3)edge(c1);
			\draw[edge] (m3)edge(c2);
			
			\draw[edge] (a1)edge(a2);
			\draw[edge] (b1)edge(b2);
			\draw[edge] (c1)edge(c2);
			
			\draw[edge] (m3)edge(m2);
			
\node [rectangle,draw=white, fill=white] (a) at (0,-4) {(a)};
		\end{tikzpicture}
\hspace{1.5cm}
\begin{tikzpicture}[very thick,scale=1]
	\node[vertex] (L1) at (-1,-0.5) {};
			\node[vertex] (L2) at (-1,0.5) {};
			\node[vertex] (C1) at (0,-1) {};
			\node[vertex] (C2) at (0,0) {};
			\node[vertex] (C3) at (0,1) {};
			\node[vertex] (R1) at (1,-1) {};
			\node[vertex] (R2) at (1,0) {};
			\node[vertex] (R3) at (1,1) {};
	
			\draw[edge] (L1)edge(C1);
			\draw[edge] (L1)edge(C2);
			\draw[edge] (L1)edge(C3);
			
			\draw[edge] (L2)edge(C1);
			\draw[edge] (L2)edge(C2);
			\draw[edge] (L2)edge(C3);

			\draw[edge] (L1)edge(L2);

			\draw[edge] (R1)edge(C1);
			\draw[edge] (R1)edge(C2);
			\draw[edge] (R1)edge(C3);
			
			\draw[edge] (R2)edge(C1);
			\draw[edge] (R2)edge(C2);
			\draw[edge] (R2)edge(C3);
			
			\draw[edge] (R3)edge(C1);
			\draw[edge] (R3)edge(C2);
			\draw[edge] (R3)edge(C3);
			
			\draw[edge] (R1)edge(R2);
			\draw[edge] (R2)edge(R3);
			\path[edge] (R1) edge [bend right] node {} (R3);	
			
\node [rectangle,draw=white, fill=white] (b) at (0,-2.5) {(b)};
	\end{tikzpicture}
	\end{center}
	\caption{(a) A graph formed from gluing three copies of $K_5$ at three vertices and then deleting two edges in their intersection. (b) A 3-rigid 3-circuit.}
	\label{fig:3k5s}
\end{figure}
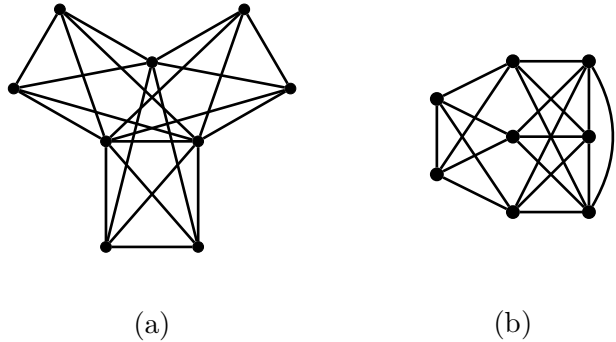

\begin{claim}
	If $A=\{a_1,a_2,a_3\}$ and $B=\{b_1,b_2,b_3\}$,
	then $G$ has a 3-dimensional generic realization with a PSD equilibrium stress.
\end{claim}

\begin{proof}
	Note that $|E^c| = 14$ and $|E^c(A,B)| = 9$.
	If $|E^c[C]|=1$ then $G$ is a deleted 3-sum of two smaller graphs.
	The result now follows from \cite[Lemma 17(a)]{grasegger2020flexible}, Lemma \ref{lem: psd clique sum} and the fact that all 3-circuits on 7 or fewer vertices support a PSD equilibrium stress (see Lemma~\ref{lemma: d circuit d+4 vertices}).
	
	Now suppose $|E^c[C]| \in \{2,3\}$.
	Since $|A|=|B|$, we may assume, without loss of generality, that the number of non-edges with an end in $A$ is less than the number of non-edges with an end in $B$;
	we shall define these sets as $E^c_A := E^c[A \cup C] \setminus E^c[C]$ and $E^c_B := E^c[B \cup C] \setminus E^c[C]$.
	We now have three possible cases;
	$|E^c_A|=0$, $|E^c_A|=|E^c_B|=1$, or $|E^c_A|=1$ and $|E^c_B|=2$.
	
	Suppose $|E^c_A|=0$.
	Since $G[A \cup C]$ cannot be 3-dependent,
	we must have $|E^c_B|=2$ and $|E^c[C]|=3$.
	By checking the possible non-edge combinations,
	we note that either no vertex of $C$ is a node and $G$ can be formed from the 3-rigid 3-circuit described in Figure \ref{fig:3k5s}(b) by a 1-extension (and hence we are done by Lemmas \ref{lem:1ext} and \ref{lem:d+5circcut}),
	or $C$ contains a node adjacent to only one vertex in $B$.
	As the only graph that satisfies the latter condition contains a double-banana subgraph (i.e.~the flexible 3-circuit formed by the deleted 2-sum of two copies of $K_5$), which can be found by deleting the node in $C$, we are done by Lemma \ref{lem: psd clique sum}.
	
	Now suppose $|E^c_A|=|E^c_B|=1$ (and hence $|E^c[C]|=3$).
	If the non-edges in $E_A^c$ and $E_B^c$ share an end then $G$ will contain a double-banana subgraph, so we may assume otherwise.
 	By checking all the remaining non-edge combinations,
 	we see that we can always 1-reduction to a node in $A$ that adds an edge between vertices in $C$,
	then apply another 1-reduction to a node in $B$ that adds an edge between vertices in $C$,
	and end up with the graph $H_3$.
	By Lemma \ref{lem: psd clique sum},
	$H_3$ has a PSD equilibrium stress,
	and by observation of the corresponding stress matrix we note it must have rank at least 2.
	The result now follows from Lemma \ref{lem:1ext}.
	
	Finally,
	suppose that $|E^c_A|=1$ and $|E^c_B|=2$ (and hence $|E^c[C]|=2$).
	By relabelling we may assume $c_1 c_2 , c_2 c_3 \in E^c[C]$ (i.e.~$c_1 c_3 \in E$) and $a_1$ is a node.
	If $a_1 c_2$ is a non-edge then $G[A \cup C]$ contains a copy of $K_5$,
	hence we may choose a non-edge $e \in \{c_1 c_2 , c_2 c_3\}$ so that both end points of $e$ are neighbours of $a_1$ in $G$.
	First suppose $G$ has two non-edges $b_i c_x, b_j c_x$ for distinct $b_i,b_j$.
	We must have $c_x \neq c_2$,
	as otherwise $G[B \cup C]$ will contain a copy of $K_5$.
	If $c_x$ is not a node, then $G$ is a deleted 3-sum of the globally 3-rigid graphs $K_6-\{e,f\}$ (see Theorem~\ref{thm:tibor}) and $K_5$;
	hence $G$ will have a PSD equilibrium stress by Lemma \ref{lem: psd clique sum}.
	If $c_x$ is a node then $G-c_x$ is the 2-sum of two copies of $K_5$;
	hence $G$ will have a PSD equilibrium stress by Lemma \ref{lem: psd clique sum}.
	Now suppose $G$ does not have two vertices in $B$ that are adjacent to the same vertex in the complement of $G$.
	Define $G' := G- a_1 +e$.
	If $E^c_B = \{b_i b_j , b_k c_\ell\}$ for distinct $i,j,k$,
	then the 1-reduction $G'-b_i + b_k c_\ell$ of $G'$ is $H_3$;
	hence the results follows from our previous observations of $H_3$ and Lemma \ref{lem:1ext}.
	If $E^c_B = \{b_i c_x, b_j c_y\}$ for distinct $i,j$ and distinct $x,y$,
	then $G'$ is the 3-sum of a copy of $K_5$ and the globally 3-rigid 3-circuit $K_{B \cup C} - E^c_B$ (see Theorem~\ref{thm:tibor}).
	Hence by Lemma \ref{lem: psd clique sum},
	$G$ has a PSD equilibrium stress.
\end{proof}

\begin{claim}
	If $A=\{a_1,a_2\}$ and $B=\{b_1,b_2,b_3,b_4\}$
	then $G$ has a 3-dimensional generic realization with a PSD equilibrium stress.
\end{claim}

\begin{proof}
	We note that $a_1a_2 \in E$ and $a_1c_i,a_2c_i \in E$ for each $i \in \{1,2,3\}$,
	as otherwise $a_1$ and $a_2$ would have a degree of 3 or less in $G$.
	If $G[C]$ has 3 edges, then $G[A\cup C]$ would be $K_5$ and so $G$ would not be a $3$-circuit.
	If $G[C]$ has 2 edges then $G$ is the 3-sum of $K_5$ and another 3-circuit with 7 vertices by \cite[Lemma 17(a)]{grasegger2020flexible},
	and hence $G$ will have a 3-dimensional generic realization with a PSD equilibrium stress by Lemmas \ref{lem: psd clique sum} and  \ref{lem:d+5circcut}.
	Suppose that $G[C]$ has either no edges or 1 edge.
	Applying a 1-reduction at either $a_1$ or $a_2$ and then applying a 0-reduction to the remaining vertex in $A$ is equivalent to deleting both $a_1$ and $a_2$ and adding an edge between two vertices in $C$. For brevity we refer to this process as an \emph{$A$-move}.
	
	Suppose that there exists an $A$-move that gives a graph with minimum degree 2.
	We can check all the possible cases where this happens by observing that $G$ has 6 non-edges with both ends in $B \cup C$,
	and at least two non-edges must have both ends in $C$.
	In every case we see that $G$ would contain either $K_5$ or $K_6- \{e,f\}$ as a subgraph, contradicting that $G$ is a 3-circuit.
	Hence we may assume that any $A$-move produces a graph with minimum degree~3.
	
	Now suppose that every $A$-move produces a graph with minimal degree 3.
	As $G$ is a 3-circuit,
	any vertex of degree 3 of $G'$ must lie in $C$.
	By checking the various assignments of non-edges between vertices in $B$ and $C$ we see that $G[C]$ must contain no edges;
	any possible graph where every $A$-move gives a graph with minimum degree 3 and $G[C]$ contains an edge would force $G$ to contain either $K_5$ or the 3-circuit $K_6-\{e,f\}$.
	This leaves the two possible 3-rigid 3-circuits given in Figure \ref{fig:amove}.
	The graph on the left has a vertex that we can apply a 1-reduction to so as to obtain the graph in Figure \ref{fig:3k5s}(b). We can verify that the claim holds for the remaining graph on the right in a similar manner as in Lemma~\ref{lemma: 6 regular}.\footnote{Let $G=(V,E)$ be the graph defined as follows. Put $V=\{a_1,a_2,b_1,b_2,b_3,b_4,c_1,c_2,c_3\}$ and   $E_2=\{a_1a_2,a_1c_1,a_1c_2,a_1c_3,a_2c_1,a_2c_2,a_2c_3,b_1b_2,b_1b_3,b_1b_4,b_2b_3,b_2b_4,b_3b_4,b_1c_1,b_1c_2,b_2c_1,b_2c_2,b_2c_3,b_3c_1,b_3c_3,b_4c_2,b_4c_3\}$. Define $(G,p)$ in $\mathbb{R}^3$ by putting $p(a_1)=(-42, -45, -40)$, $p(a_2)=(44, 48, 
  44)$, $p(b_1)=(9, -1, -7)$, $p(b_2)=(-8, -8, 
  3)$, $p(b_3)=(-1, -4, -5)$, $p(b_4)=(3, -7, 3)$, $p(c_1)=(1, -1, 9)$, $p(c_2)=(-3, -3, -4)$ and $p(c_3)=(-5, -10, -6)$. Given this realization it is simple for the reader to verify that $(G_i,p)$ is infinitesimally rigid in $\mathbb{R}^3$ and that the unique equilibrium stress of $(G,p)$ has a PSD stress matrix of rank 4. Since $G$ is not globally 3-rigid, it follows that a sufficiently nearby generic framework $(G,q)$ has a rank 4 PSD equilibrium stress.}

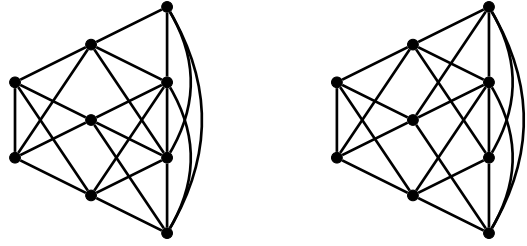
\begin{figure}[ht]
	\begin{center}
		\begin{tikzpicture}
			\node[vertex] (L1) at (-1,-0.5) {};
			\node[vertex] (L2) at (-1,0.5) {};
			\node[vertex] (C1) at (0,-1) {};
			\node[vertex] (C2) at (0,0) {};
			\node[vertex] (C3) at (0,1) {};
			\node[vertex] (R1) at (1,-1.5) {};
			\node[vertex] (R2) at (1,-0.5) {};
			\node[vertex] (R3) at (1,0.5) {};
			\node[vertex] (R4) at (1,1.5) {};
	
			\draw[edge] (L1)edge(C1);
			\draw[edge] (L1)edge(C2);
			\draw[edge] (L1)edge(C3);
			
			\draw[edge] (L2)edge(C1);
			\draw[edge] (L2)edge(C2);
			\draw[edge] (L2)edge(C3);
			
			\draw[edge] (L1)edge(L2);
			
			\draw[edge] (R1)edge(C1);
			\draw[edge] (R1)edge(C2);
			
			\draw[edge] (R2)edge(C1);
			\draw[edge] (R2)edge(C2);
			\draw[edge] (R2)edge(C3);
			
			\draw[edge] (R3)edge(C1);
			\draw[edge] (R3)edge(C2);
			\draw[edge] (R3)edge(C3);
			
			\draw[edge] (R4)edge(C3);
			
			\draw[edge] (R1)edge(R2);
			\draw[edge] (R2)edge(R3);
			\path[edge] (R3)edge(R4);	
			\path[edge] (R3) edge [bend left] node {} (R1);	
			\path[edge] (R4) edge [bend left] node {} (R2);	
			\path[edge] (R4) edge [bend left] node {} (R1);	
		\end{tikzpicture}\qquad\qquad%
		\begin{tikzpicture}
			\node[vertex] (L1) at (-1,-0.5) {};
			\node[vertex] (L2) at (-1,0.5) {};
			\node[vertex] (C1) at (0,-1) {};
			\node[vertex] (C2) at (0,0) {};
			\node[vertex] (C3) at (0,1) {};
			\node[vertex] (R1) at (1,-1.5) {};
			\node[vertex] (R2) at (1,-0.5) {};
			\node[vertex] (R3) at (1,0.5) {};
			\node[vertex] (R4) at (1,1.5) {};
	
			\draw[edge] (L1)edge(C1);
			\draw[edge] (L1)edge(C2);
			\draw[edge] (L1)edge(C3);
			
			\draw[edge] (L2)edge(C1);
			\draw[edge] (L2)edge(C2);
			\draw[edge] (L2)edge(C3);
			
			\draw[edge] (L1)edge(L2);
			
			\draw[edge] (R1)edge(C1);
			\draw[edge] (R1)edge(C2);
			
			\draw[edge] (R2)edge(C1);
			\draw[edge] (R2)edge(C3);
			
			\draw[edge] (R3)edge(C1);
			\draw[edge] (R3)edge(C2);
			\draw[edge] (R3)edge(C3);
			
			\draw[edge] (R4)edge(C2);
			\draw[edge] (R4)edge(C3);
			
			\draw[edge] (R1)edge(R2);
			\draw[edge] (R2)edge(R3);
			\path[edge] (R3)edge(R4);	
			\path[edge] (R3) edge [bend left] node {} (R1);	
			\path[edge] (R4) edge [bend left] node {} (R2);	
			\path[edge] (R4) edge [bend left] node {} (R1);	
		\end{tikzpicture}
	\end{center}
	\caption{The only two possible graphs that $G$ can be if every $A$-move gives a graph with minimal degree 3.}
	\label{fig:amove}
\end{figure}

\begin{figure}[ht]
	\begin{center}
		\begin{tikzpicture}
			\node[vertex] (L1) at (-1,-0.5) {};
			\node[vertex] (L2) at (-1,0.5) {};
			\node[vertex] (C1) at (0,-1) {};
			\node[vertex] (C2) at (0,0) {};
			\node[vertex] (C3) at (0,1) {};
			\node[vertex] (R1) at (1,-1.5) {};
			\node[vertex] (R2) at (1,-0.5) {};
			\node[vertex] (R3) at (1,0.5) {};
			\node[redvertex] (R4) at (1,1.5) {};
	
			\draw[edge] (L1)edge(C1);
			\draw[edge] (L1)edge(C2);
			\draw[edge] (L1)edge(C3);
			
			\draw[edge] (L2)edge(C1);
			\draw[edge] (L2)edge(C2);
			\draw[edge] (L2)edge(C3);

			\draw[edge] (L1)edge(L2);

			\draw[edge] (R1)edge(C1);
			\draw[edge] (R1)edge(C2);
			\draw[edge] (R1)edge(C3);
			
			\draw[edge] (R2)edge(C1);
			\draw[edge] (R2)edge(C2);
			\draw[edge] (R2)edge(C3);
			
			\draw[edge] (R3)edge(C1);
			\draw[edge] (R3)edge(C2);
			\draw[edge] (R3)edge(C3);
			
			\draw[edge] (R4)edge(C3);
			
			\draw[edge] (R1)edge(R2);
			\draw[edge] (R2)edge(R3);
			\path[edge] (R3)edge(R4);	
			\path[edge] (R4) edge [bend left] node {} (R1);	
			\path[edge] (R4) edge [bend left] node {} (R2);	
			\path[edge, red] (R3) edge [bend left] node {} (R1);	
			
			\node [rectangle,draw=white, fill=white] (a) at (0,-2) {(a)};
		\end{tikzpicture}\qquad\qquad%
		\begin{tikzpicture}
			\node[vertex] (L1) at (-1,-0.5) {};
			\node[vertex] (L2) at (-1,0.5) {};
			\node[vertex] (C1) at (0,-1) {};
			\node[vertex] (C2) at (0,0) {};
			\node[vertex] (C3) at (0,1) {};
			\node[bluevertex] (R1) at (1,-1.5) {};
			\node[vertex] (R2) at (1,-0.5) {};
			\node[vertex] (R3) at (1,0.5) {};
			\node[redvertex] (R4) at (1,1.5) {};
	
			\draw[edge] (L1)edge(C1);
			\draw[edge] (L1)edge(C2);
			\draw[edge] (L1)edge(C3);
			
			\draw[edge] (L2)edge(C1);
			\draw[edge] (L2)edge(C2);
			\draw[edge] (L2)edge(C3);

			\draw[edge] (L1)edge(L2);
			
			\path[edge] (C1)edge(C2);

			\draw[edge] (R1)edge(C1);
			\draw[edge] (R1)edge(C2);
			
			\draw[edge,cyan] (R2)edge(C1);
			\draw[edge] (R2)edge(C2);
			\draw[edge] (R2)edge(C3);
			
			\draw[edge] (R3)edge(C1);
			\draw[edge] (R3)edge(C2);
			\draw[edge] (R3)edge(C3);
			
			\draw[edge] (R4)edge(C1);
			\draw[edge] (R4)edge(C3);
			
			\draw[edge] (R1)edge(R2);
			\draw[edge] (R2)edge(R3);
			\path[edge] (R3)edge(R4);	
			\path[edge] (R4) edge [bend left] node {} (R2);	
			\path[edge] (R3) edge [bend left] node {} (R1);

			\path[edge,red] (C1) edge [bend left] node {} (C3);	
			
			\node [rectangle,draw=white, fill=white] (c) at (0,-2) {(b)};
		\end{tikzpicture}\qquad\qquad%
		\begin{tikzpicture}
			\node[vertex] (L1) at (-1,-0.5) {};
			\node[vertex] (L2) at (-1,0.5) {};
			\node[vertex] (C1) at (0,-1) {};
			\node[vertex] (C2) at (0,0) {};
			\node[vertex] (C3) at (0,1) {};
			\node[bluevertex] (R1) at (1,-1.5) {};
			\node[vertex] (R2) at (1,-0.5) {};
			\node[vertex] (R3) at (1,0.5) {};
			\node[redvertex] (R4) at (1,1.5) {};
	
			\draw[edge] (L1)edge(C1);
			\draw[edge] (L1)edge(C2);
			\draw[edge] (L1)edge(C3);
			
			\draw[edge] (L2)edge(C1);
			\draw[edge] (L2)edge(C2);
			\draw[edge] (L2)edge(C3);

			\draw[edge] (L1)edge(L2);
			
			\path[edge] (C1)edge(C2);
			\path[edge,red] (C2)edge(C3);
			
			\draw[edge] (R1)edge(C1);
			\draw[edge] (R1)edge(C2);
			
			\draw[edge] (R2)edge(C1);
			\draw[edge] (R2)edge(C2);
			\draw[edge] (R2)edge(C3);
			
			\draw[edge] (R3)edge(C1);
			\draw[edge] (R3)edge(C2);
			\draw[edge] (R3)edge(C3);
			
			\draw[edge] (R4)edge(C2);
			\draw[edge] (R4)edge(C3);
			
			\draw[edge] (R1)edge(R2);
			\draw[edge,cyan] (R2)edge(R3);
			\path[edge] (R3)edge(R4);	
			\path[edge] (R4) edge [bend left] node {} (R2);	
			\path[edge] (R3) edge [bend left] node {} (R1);	
			
			\node [rectangle,draw=white, fill=white] (c) at (0,-2) {(c)};
		\end{tikzpicture}
	\end{center}
	\caption{(a) Ignoring the red edge, a 3-rigid 3-circuit that can be formed from the 3-rigid 3-circuit in Figure \ref{fig:3k5s}(b) by a 1-extension. (b)--(c) Ignoring the red and blue edges, two graphs that can be formed from $H_3$ by two consecutive 1-extensions.}
	\label{fig:amove2}
\end{figure}
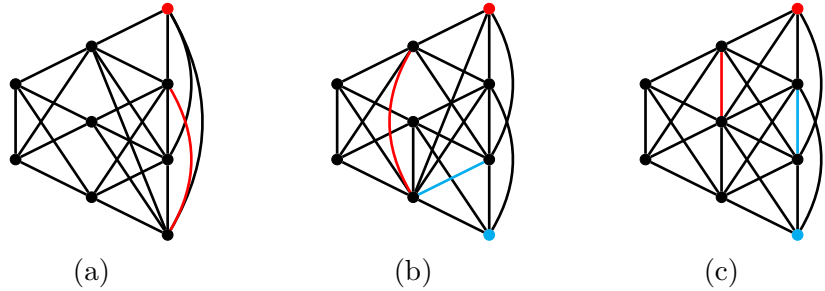

Hence, using the handshaking lemma, we may assume that $G$ has an $A$-move that produces a graph $G'$ with minimal degree 4.
	If $G'$ is globally 3-rigid the result follows from Theorem \ref{thm: gen ur} and Lemma \ref{lem:1ext},
	so we may suppose $G'$ is not globally 3-rigid.
	By an easy case-by-case check of graphs on 7 vertices with minimum degree 4 and $16 (= 3\cdot 7 -5)$ edges, we see that we must have $G'=H_3$.
	As we are assuming $G$ has no separating set of size 3 with more than 1 edge,
	it follows that $G$ must be one of the three graphs depicted in Figure \ref{fig:amove2};
	we can see this by systematically applying reverse $A$-moves to $H_3$.
	We can obtain the 3-rigid 3-circuit in Figure~\ref{fig:3k5s}(b) from the graph in Figure~\ref{fig:amove2}(a) by applying a 1-reduction at the red vertex and adding the red edge, hence it will also have a 3-dimensional PSD equilibrium stress by Lemmas~\ref{lem:1ext} and~\ref{lem:d+5circcut}.
	For the other two graphs in Figure \ref{fig:amove2}, we can apply a 1-reduction at the red vertex to add the red edge and then a 1-reduction at the blue vertex that adds the blue edge to obtain $H_3$. Hence Lemmas \ref{lem:1ext} and \ref{lem:d+5circcut} complete the proof.
\end{proof}
The above claims cover all possibilities and hence complete the proof.
\end{proof}

\section*{Acknowledgments}

This paper arose as part of the Fields Institute 
Thematic Program on Geometric constraint systems, framework rigidity, and distance geometry.

DIB was partially supported by a Mathematical Sciences
Postdoctoral Research Fellowship from the US NSF, grant DMS-1802902.
SD was supported by the Heilbronn Institute for Mathematical Research and the Austrian Science Fund (FWF): P31888.
AN was partially supported by EPSRC grant EP/W019698/1. SJG was partially supported by US NSF grant 
DMS-1564473. MS was partially supported by US
NSF grant DMS-1564480 and US NSF grant DMS-1563234.

\appendix

\section{1-extensions that preserve PSD equilibrium stresses}
\label{sec:app}

We prove Lemma \ref{lem:1ext}.
We first require the following technical result from 
 \cite[Lemma 4.9]{cgt2} and the discussion around it.
 
\begin{lemma}\label{lem: psd stress sign}
    Let $(G,p)$ be a $d$-dimensional framework and $\omega$ a PSD 
    equilibrium stress of $(G,p)$ and let $xy$ be an edge of $G$ so that 
    $\omega_{xy} > 0$.  
    Then there is a non-singular 
    projective transformation $T$ on $\RR^d$ so that $(G,T(p))$
    has a PSD equilibrium stress $\psi$ so that $\psi_{xy} < 0$.
\end{lemma}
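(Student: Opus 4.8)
The plan is to use the standard fact that equilibrium stresses transform in a controlled way under projective transformations, and that this transformation preserves the signature of the stress matrix. First I would set up homogeneous coordinates: lift each point to $\hat p(i) = (p(i),1) \in \RR^{d+1}$ and collect these as the rows of an $n\times(d+1)$ matrix $\hat P$. Writing $\Omega$ for the stress matrix of $\omega$, the equilibrium conditions together with the zero row‑sum condition are exactly the statement $\Omega\hat P = 0$ (the last column of $\hat P$ is the all‑ones vector, which handles the row sums). A projective transformation $T$ is induced by some $M\in\mathrm{GL}(d+1)$; writing $M\hat p(i) = \lambda_i\hat q(i)$ with the last coordinate of $\hat q(i)$ equal to $1$, let $q(i)\in\RR^d$ be the first $d$ coordinates of $\hat q(i)$ and $D = \diag(\lambda_1,\dots,\lambda_n)$. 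Provided no $\lambda_i$ vanishes (equivalently, $M$ sends no $p(i)$ to the plane at infinity), $D$ is invertible, the matrix $D^{-1}\hat P M^\top$ has rows $\hat q(i)$, and a direct computation gives $(D\Omega D)(D^{-1}\hat P M^\top) = D\Omega\hat P M^\top = 0$. Hence $\Psi := D\Omega D$ annihilates the homogeneous configuration of $(G,q)$; since $\Psi_{ij} = \lambda_i\lambda_j\Omega_{ij}$ it is supported on the edges of $G$ and the diagonal, so it is the stress matrix of an equilibrium stress $\psi$ of $(G,q)$ with $\psi_{ij} = \lambda_i\lambda_j\omega_{ij}$. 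Finally, $\Psi = D\Omega D^\top$ is congruent to $\Omega$ via the invertible real matrix $D$, so it has the same signature; in particular $\Psi$ is PSD because $\Omega$ is.

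Granting this, the remaining task is to choose $M$ so that $\psi_{xy} = \lambda_x\lambda_y\omega_{xy} < 0$, i.e.\ so that $\lambda_x$ and $\lambda_y$ have opposite signs while all $\lambda_i\neq 0$. I would do this by separating $p(x)$ from $p(y)$ with an affine hyperplane that avoids the finitely many points $p(i)$. Concretely, since $p(x)\neq p(y)$ (which we may assume; $p$ is generic in every application of this lemma) there is an affine functional $z\mapsto b - \iprod{a}{z}$ on $\RR^d$ that is negative at $p(x)$, positive at $p(y)$, nonzero at every $p(i)$, and has $b\neq 0$; such a functional exists because the hyperplanes strictly separating $p(x)$ and $p(y)$ form an open set and only finitely many additional linear conditions (passing through some $p(i)$, or $b=0$) must be avoided. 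Taking $M$ to be the $(d+1)\times(d+1)$ block matrix with $I_d$ in the top‑left block, $-a^\top$ in the bottom‑left, $b$ in the bottom‑right, and zeros in the top‑right, one gets $\det M = b\neq 0$ and $\lambda_i = b - \iprod{a}{p(i)}$, so every $\lambda_i\neq 0$ and $\lambda_x\lambda_y<0$. Then the projective transformation $T$ induced by $M$ is non‑singular and defined at each $p(i)$, and $(G,T(p))$ carries the PSD equilibrium stress $\psi$ with $\psi_{xy}<0$, as required.

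I do not expect a serious obstacle here: this is essentially \cite[Lemma 4.9]{cgt2}, so the write‑up can either reproduce the short argument above or cite it. The only points needing care are (i) the bookkeeping with homogeneous coordinates to establish the transformation rule $\psi_{ij} = \lambda_i\lambda_j\omega_{ij}$ and the congruence argument showing positive semidefiniteness is preserved, and (ii) verifying that the separating‑hyperplane choice can be made so that $M$ is non‑singular and no point is sent to infinity. Both are routine.
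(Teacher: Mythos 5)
Your proof is correct. Note first that the paper does not actually prove this lemma: it is stated as a direct citation of \cite[Lemma~4.9]{cgt2} ``and the discussion around it,'' so there is no in-paper argument to compare against. What you have written out is exactly the standard projective-transformation-of-stresses argument that underlies that citation: the identity $\Omega\hat P=0$ encoding both the equilibrium condition and the zero row sums, the transformation rule $\Psi=D\Omega D$ with $\psi_{ij}=\lambda_i\lambda_j\omega_{ij}$, preservation of the signature by congruence, and the choice of an affine functional $b-\iprod{a}{\cdot}$ that separates $p(x)$ from $p(y)$, vanishes at no $p(i)$, and has $b\neq 0$ so that $M$ is invertible and no vertex is sent to infinity. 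All of these steps check out. The one point you should make explicit rather than parenthetical is the hypothesis $p(x)\neq p(y)$: if the two endpoints coincide then $\lambda_x=\lambda_y$ and $\psi_{xy}=\lambda_x^2\omega_{xy}>0$ for every admissible $T$, so the projective method (and indeed the lemma as literally stated) needs this nondegeneracy. Since the lemma is only ever invoked for generic frameworks (in the proof of Lemma~\ref{lem:1ext}), this costs nothing, but it belongs in the statement or the first line of the proof rather than in an aside.
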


\begin{proof}[Proof of Lemma \ref{lem:1ext}]
    Let $xy$ be the edge deleted during the 1-extension and $z$ be the vertex added to form $G'$.
    Denote the neighbours of $z$ that are neither $x$ nor $y$ by $v_1,\ldots, v_{d-1}$.
    For each $t \in \mathbb{R}$,
    define $p^t$ to be the realization of $G'$ in $\mathbb{R}^d$ where $p^t_v = p_v$ for all $v \in V$ and $p^t_z = tp_x + (1-t)p_y$.
    Also for each $t \in \mathbb{R}$ define the map $\omega^t : E' \rightarrow \mathbb{R}$ by setting $\omega^t_e = \omega_e$ for all $e \in E$,
    $\omega^t_{xz} = (1-t)^{-1}\omega_{xy}$, $\omega^t_{yz} = t^{-1}\omega_{xy}$ and $\omega^t_{z v_i} = 0$ for all $i \in \{1,\ldots, d-1\}$.
    Note that for each $t \in \mathbb{R}$, $\omega^t$ is an equilibrium stress of $(G',p^t)$.
    Since $G$ is a $d$-circuit,
    $\omega_{xy} \neq 0$.
    Hence by Lemma \ref{lem: psd stress sign},
    we may suppose that $\omega_{xy}= -1$.

   Given $\Omega$ is the equilibrium stress matrix associated with $\omega$, 
    the equilbrium stress matrix associated with $\omega^t$ is the matrix
    \begin{align*}
        \Omega^t = 
        \begin{bmatrix}
            M_t & 0_{3 \times (|V|-2)} \\
            0_{(|V|-2) \times 3} & 0_{(|V|-2) \times (|V|-2)}
        \end{bmatrix}
        + 
        \begin{bmatrix}
            0 & 0_{1 \times |V|} \\
            0_{|V| \times 1} & \Omega
        \end{bmatrix},
    \end{align*}
    where
    \begin{align*}
        M_t =
        \begin{bmatrix}
            \frac{-1}{t(1-t)} & \frac{1}{1-t} & \frac{1}{t}\\
            \frac{1}{1-t} & \frac{-t}{1-t} & -1\\
            \frac{1}{t} & -1 & \frac{t-1}{t}
        \end{bmatrix}.
    \end{align*}
    Suppose $a \in \mathbb{R}^E$ is an element of the kernel of $\Omega^t$.
    By observing the $z$-coordinate of $\Omega^t a$ we see that $a_z = ta_x + (1-t) a_y$.
    By observing the $x$- and $y$-coordinates of $\Omega^t a$ with this substitution we see that $\sum_{v \in N_G(x)} \omega^t(a_x - a_v) = 0$ and $\sum_{v \in N_G(y)} \omega^t(a_y - a_v) = 0$.
    Thus the nullity of $\Omega^t$ is equal to the nullity of $\Omega$,
    and so $\rank \Omega^t = \rank \Omega +1 \geq |V'|-d-2$ for each $t \neq 0,1$.
    As $\rank M_t = 1$ for all $t \neq 0,1$ and
    \begin{align*}
        M_t \rightarrow 
        \begin{bmatrix}
            0 & 0 & 0\\
            0 & 1 & -1\\
            0 & -1 & 1
        \end{bmatrix}
        \qquad \text{as } t \rightarrow \infty,
    \end{align*}
    the matrix $M_t$ is PSD for sufficiently large $t$.
    Hence we can now fix some $T >1$ such that $\Omega^T$ is PSD.

    By Lemma \ref{lem:0ext},
    $(G',p^t)$ is infinitesimally rigid,
    hence there exists an open neighbourhood $U$ of $p^T$ where for each $p' \in U$, the framework $(G',p')$ has exactly one (up to scalar multiplication) equilibrium stress and the rigidity matrix $R(G',p')$ has maximal rank over all realizations.
    It follows that we can define a continuous map $\lambda:U \rightarrow \mathbb{R}^{E'}$ such that $\lambda(q)$ is an equilibrium stress of $(G',q)$ and $\lambda(p^T) = \omega^T$.
    Suppose that an equilibrium stress $\lambda(q)$ has rank $|V'|-d-1$ for some $q \in U$.
    By \cite[Theorem~1.3]{Bob05}, $G'$ is globally $d$-rigid.
    Hence there exists a generic framework $(G',p')$ with a PSD equilibrium stress of rank $|V'|-d-1$ by Theorem \ref{thm: gen ur}. 
    Suppose instead that the equilibrium stress $\lambda(q)$ has rank at most $|V'|-d-2$ for all $q \in U$.
    Then the rank of the equilibrium stress $\lambda(p^T) = \omega^T$ is maximal over $U$.
    As the rank function is lower semi-continuous,
    there exists an open neighbourhood $U' \subset U$ of $p^T$ where each equilibrium stress $\lambda(q)$ with $q \in U'$ has rank $|V'|-d-2$.
    
    Define, for each $i \in \{1,\ldots, |V'|\}$, the continuous map $\mu_i :U' \rightarrow \mathbb{R}^{|V|-d-2}$ which maps a realization $q \in U'$ to its $i$-th highest eigenvalue of the equilibrium stress matrix associated to $\lambda (q)$.
    Since $\omega^T$ is PSD with rank $|V'|-d-2$,
    $\mu_i(p^T) >0$ for all $i \leq |V'|-d-2$ and $\mu_i(p^T) =0$ for all $i > |V'|-d-2$.
    By the continuity of the $\mu_i$ maps,
    there exists a sufficiently close generic realization $p' \in U'$ where $\mu_i(p') >0$ for all $i \leq |V'|-d-2$.
    Since the rank of $\lambda(p')$ is $|V'|-d-2$,
    $\mu_i(q) =0$ for all $i > |V'|-d-2$ also.
    Hence the generic framework $(G',p')$ has a PSD equilibrium stress of rank $|V'|-d-2$.
\end{proof}

\bibliographystyle{plainurl} 
\bibliography{mltRigid}      
\end{document}